\def\EMAIL#1{\href{mailto:#1}{#1}} % When hyperref is used, otherwise outcomment 
\renewenvironment{proof}[1][\proofname]{\par
\pushQED{\qed}%
\normalfont \topsep6\p@\@plus6\p@\relax
\trivlist
\item\relax
{\itshape
#1\@addpunct{.}\enspace}\hspace\labelsep\ignorespaces
}{%
\popQED\endtrivlist\@endpefalse
}
\newtheorem{thm}{Theorem}[section]
\newtheorem{theorem}[thm]{Theorem}
\newtheorem{lem}[thm]{Lemma}
\newtheorem{lemma}[thm]{Lemma}
\newtheorem{prop}[thm]{Proposition}
\newtheorem{proposition}[thm]{Proposition}
\newtheorem{definition}[thm]{Definition}
\newcommand{\beq}{\begin{equation}}
\newcommand{\eeq}{\end{equation}}
\newcommand{\beqa}{\begin{eqnarray}}
\newcommand{\eeqa}{\end{eqnarray}}
\newcommand{\beqas}{\begin{eqnarray*}}
\newcommand{\eeqas}{\end{eqnarray*}}
\newcommand{\ei}{\end{itemize}}
\newcommand{\R}{\mathbb{R}}
\newcommand{\lam}{{\lambda}}
\newcommand{\cK}{{\cal K}}
\newcommand{\inner}[2]{\langle #1,#2\rangle}
\newcommand{\inte}{\mathrm{int}\,}
\newcommand{\dom}{\mathrm{dom}\,}
\definecolor{green}{rgb}{0,0.6,0.0} % Darken the default green
\global\long\def\cK{{\mathcal K}}
\global\long\def\cH{{\mathcal H}}
\global\long\def\rn{\Re^{n}}%
\global\long\def\R{\Re}%
\global\long\def\r{\Re}%
\global\long\def\pt{\mathbb{\partial}}%
\global\long\def\lam{\lambda}%
\global\long\def\argmin{\operatorname*{argmin}}%
\global\long\def\dom{\operatorname*{dom}}%
\global\long\def\inner#1#2{\langle#1,#2\rangle}%
\global\long\def\cConv{\overline{{\rm Conv}}\ }%
\global\long\def\intr{\operatorname*{int}}%
\begin{document}

% Outcomment only when entries are known. Otherwise leave as is and 
%   default values will be used.
%\setcounter{page}{1}
%\VOLUME{00}%
%\NO{0}%
%\MONTH{Xxxxx}% (month or a similar seasonal id)
%\YEAR{0000}% e.g., 2005
%\FIRSTPAGE{000}%
%\LASTPAGE{000}%
%\SHORTYEAR{00}% shortened year (two-digit)
%\ISSUE{0000} %
%\LONGFIRSTPAGE{0001} %
%\DOI{10.1287/xxxx.0000.0000}%

% Author's names for the running heads
% Sample depending on the number of authors;
% \RUNAUTHOR{Jones}
% \RUNAUTHOR{Jones and Wilson}
% \RUNAUTHOR{Jones, Miller, and Wilson}
% \RUNAUTHOR{Jones et al.} % for four or more authors
% Enter authors following the given pattern:
\RUNAUTHOR{Kong, Melo, and Monteiro}

% Title or shortened title suitable for running heads. Sample:
% \RUNTITLE{Bundling Information Goods of Decreasing Value}
% Enter the (shortened) title:
\RUNTITLE{An NL-IAPIAL Method for Cone-Constrained Composite Optimization}

% Full title:
\TITLE{Iteration complexity of a proximal augmented Lagrangian method for solving nonconvex composite optimization problems with nonlinear convex constraints}

% \date{August 16, 2020 (Revised: ??? October 4, 2020)}

% INFORMS Author Styling
% Block of authors and their affiliations starts here:
% NOTE: Authors with same affiliation, if the order of authors allows, 
%   should be entered in ONE field, separated by a comma. 
%   \EMAIL field can be repeated if more than one author
\ARTICLEAUTHORS{%
\AUTHOR{Weiwei Kong}
\AFF{Computer Science and Mathematics Division, Oak Ridge National Laboratory, Oak Ridge, TN, 37830. \EMAIL{wwkong92@gmail.com}}
\AUTHOR{Jefferson G. Melo}
\AFF{Instituto de Matem\'atica e Estat\'istica, Universidade Federal de Goi\'as, Campus II- Caixa Postal 131, CEP 74001-970, Goi\^ania-GO, Brazil. \EMAIL{jefferson@ufg.br}}
\AUTHOR{Renato D.C. Monteiro}
\AFF{School of Industrial and Systems Engineering, Georgia Institute of
Technology, Atlanta, GA, 30332-0205. \EMAIL{monteiro@isye.gatech.edu}}
} % end of the block
	
\ABSTRACT{%
This paper proposes and analyzes  a proximal augmented Lagrangian  (NL-IAPIAL)  method for solving  smooth nonconvex composite optimization problems with nonlinear $\cK$-convex constraints, i.e., the constraints are convex with respect to the order given by a  closed   convex cone $\cK$. Each NL-IAPIAL iteration consists of inexactly solving a proximal augmented Lagrangian subproblem  by an accelerated composite gradient (ACG) method  followed by a  Lagrange multiplier update. Under some mild assumptions, it is shown that NL-IAPIAL generates an approximate  stationary solution of the constrained problem in ${\cal O}(\log(1/\rho)/\rho^{3})$ inner iterations,  where $\rho>0$ is a given tolerance. Numerical experiments are also given to illustrate the computational efficiency of the proposed method.
}%

\KEYWORDS{inexact proximal augmented Lagrangian method, $\cK$-convexity,
nonlinear constrained smooth nonconvex composite programming,
 accelerated first-order methods, iteration complexity.}%

\MSCCLASS{%
49M05, 49M37, 90C26, 90C30, 90C60, 65K05, 65K10, 68Q25, 65Y20.
}%

\ORMSCLASS{Primary: programming: nonlinear: theory, algorithms; Secondary: programming: nonlinear: nondifferentiable; mathematics: convexity}

%47J22, 49M27, 90C25, 90C26, 90C30, 90C60, 65K10
%90C25  	Convex programming
%90C26  	Nonconvex programming, global optimization
%90C30  	Nonlinear programming
%90C60  	Abstract computational complexity for mathematical programming problems
%49M05  	Numerical methods based on necessary conditions
%49M37  	Numerical methods based on nonlinear programming
%49M30  	???Other methods, not based on necessary conditions (penalty function, etc.)
%65K05  	Numerical mathematical programming methods 
%65K10  	Numerical optimization and variational techniques 
%68Q25  	Analysis of algorithms and problem complexity
%65Y20  	Complexity and performance of numerical algorithms

\makeatletter
\maketitle

\section{Introduction} \label{sec:intro}

This paper presents a
nonlinear inner-accelerated proximal inexact
augmented Lagrangian (NL-IAPIAL) method for solving the
cone convex constrained nonconvex composite optimization (CCC-NCO) problem
\begin{equation}
\phi^{*}:=\inf_{z\in\rn}\left\{ \phi(z):=f(z)+h(z):g(z)\preceq_{\cK}0\right\} ,\label{eq:main_prb}
\end{equation}
where ${\cal K}$ is a closed convex cone such
that $\emptyset\neq{\cal K}\neq \r^\ell$,  $g:\rn\mapsto\r^\ell$
is a differentiable $\cK$-convex function with
a Lipschitz continuous
gradient,
% \textcolor{blue}{??Jacobian, derivative??}
% wK: I will keep 'gradient' here. The Jacobian can be though of as the transpose of the 'gradient' and introducing this terminology here may be confusing later on.  
$h$ is a proper closed convex function with compact domain,
 $f$ is a nonconvex differentiable function on the domain of $h$ with a
Lipschitz continuous gradient, and the relation $g(z)\preceq_{\cK} 0$ means $g(z)\in-\cK$.
% \textcolor{red}{It is worth mentioning that a necessary condition for a point $\hat z$ to be a local minimum of \eqref{eq:main_prb} is that there exists a  multiplier $\hat p\in\cK^*$ such that $(\hat z,\hat p)$  satisfies \eqref{eq:rho_eta_approx_soln}-\eqref{ineq:rho_eta_approx_soln} with $(\hat w, \hat q)=(0,0)$. Hence,  relations \eqref{eq:rho_eta_approx_soln}-\eqref{ineq:rho_eta_approx_soln} can be seen as a relaxation of this condition and we refer to $(\hat{z},\hat{p},\hat{w}, \hat{q})$ as a $(\hat \rho, \hat \eta)$-stationary solution of \eqref{eq:main_prb}, see Definition~\ref{def:stationarypoint}}.

% ????
More specifically, the NL-IAPIAL method
is based on the 
 augmented Lagrangian (AL) (see \cite{zhaosongAugLag18} and \cite[Section 11.K]{VariaAna})
\begin{align}
{\cal L}_{\beta}(z,p) 
% & := f(z)+h(z)+\inf_{u\in \r^\ell}\left\{ \delta_{-{\cal K}}(g(z)+u)+\frac{\beta}{2}\|u\|^{2}-\left\langle p,u\right\rangle \right\} \nonumber \\
& := (f+h)(z)+\frac{1}{2\beta}\left[{\rm dist}^{2}(p+\beta g(z),-{\cal K})-\|p\|^{2}\right] \quad \forall \beta > 0, \label{eq:aug_lagr_def}
\end{align}
where ${\rm dist}(y,S)$ denotes the Euclidean distance
between a point $y\in\Re^\ell$ and a set $S\subseteq\Re^\ell$.
% , and the last identity holds whenever $\beta > 0$. 
It performs the following proximal point-type update to generate its $k$-th iterate: given $(z_{k-1},p_{k-1})$ and $(\lam_k,\beta_k)$, compute
\begin{align}
z_{k} & \approx\argmin_{u}\left\{ \lam_k {\cal L}_{\beta_k}(u;p_{k-1})+\frac{1}{2}\|u-z_{k-1}\|^{2}\right\} ,\label{eq:approx_primal_update}\\
p_{k} & =\Pi_{\cK^{*}}(p_{k-1}+{\beta_k}g(z_{k})),\label{eq:dual_update}
\end{align}
% ---------------------
% \begin{align}
% z_{k} & \approx\argmin_{u}\left\{ {\cal L}_{\beta_k}(u;p_{k-1}) \right\} ,\label{eq:approx_primal_update}\\
% p_{k} & =\Pi_{\cK^{*}}(p_{k-1}+{\beta_k}g(z_{k})),\label{eq:dual_update}
% \end{align}
% ----------------------
where $\cK^{*}$ denotes the dual cone of $\cK$,  the function $\Pi_{\cK^{*}}$
denotes the projection onto ${\cal K}^{*}$, and $z_{k}$ 
is a suitable approximate solution of the composite problem underlying
\eqref{eq:approx_primal_update}. Even though there are different approaches for obtaining $z_k$ as in \eqref{eq:approx_primal_update}, NL-IAPIAL employs an accelerated composite gradient (ACG) algorithm to obtain it, and hence the ``inner-accelerated'' qualifier in its name. 
Moreover, at the end of the $k$-th iteration above, it performs a key test to decide whether $\beta_k$ is left unchanged or doubled.

Under a Slater-like assumption\footnote{See Proposition~\ref{prop:weak_slater} in view of assumption (A4) in Subsection~\ref{subsec:prb_of_interest}.} and a suitable choice of the inputs $(\lam,\beta)$, it is shown that for any $(\hat{\rho},\hat{\eta})\in\r_{++}^{2}$, the NL-IAPIAL method obtains a near stationary solution, i.e.,
a quadruple $(\hat{z},\hat{p},\hat{w}, \hat{q})$
satisfying 
\begin{align}
\hat{w}\in\nabla f(\hat{z})+\pt h(\hat{z})+\nabla g(\hat{z})\hat{p},& \quad \inner{g(\hat z) + \hat q}{\hat p} = 0, \quad  g(\hat z) + \hat q \preceq_{\cK} 0, \quad\hat{p}\succeq_{\cK^{*}} 0 \label{eq:rho_eta_approx_soln}
\\
&\|\hat{w}\|\leq\hat{\rho}, \quad \|\hat{q}\|\leq \hat{\eta},\label{ineq:rho_eta_approx_soln}
\end{align}
in ${\cal O}((\hat{\eta}^{-1/2}\hat{\rho}^{-2}+\hat{\rho}^{-3})\log(\hat{\rho}^{-1} + \hat\eta^{-1}))$
ACG iterations. 
If \eqref{eq:main_prb} satisfies a certain regularity condition, then it is well-known that a necessary condition for a point $\hat z$ to be a local minimum of \eqref{eq:main_prb} is that there exists a  multiplier $\hat p\in\cK^*$ such that $(\hat z,\hat p,\hat q,\hat w) = (\hat z,\hat p,0,0)$  satisfies \eqref{eq:rho_eta_approx_soln}.
Moreover, the aforementioned complexity bound
is derived without assuming that the initial point
$z_0 \in \dom h$ is feasible, i.e., it
also satisfies
$g(z_0) \preceq_{\cK} 0$.
A key fact derived in this work is that the
sequence of Lagrange multipliers generated by NL-IAPIAL
is bounded,
and its proof strongly uses the fact that
its constraint function $g$ is $\cK$-convex
(although \eqref{eq:main_prb} is nonconvex due to
the nonconvexity assumption on $f$).
\\

\textit{Overview of AL methods}. The discussion below separates the
AL methods into two classes:
\begin{itemize}

\item[(i)] \textit{Proximal AL (PAL) methods} whose $k$-th iteration is: 
given a pair $(z_{k-1},p_{k-1})$ and a penalty parameter $\beta_k$, choose a prox parameter $\lam_k$ such that the objective function of \eqref{eq:approx_primal_update} is strongly convex,
    compute an approximate solution $z_k$ of
   \eqref{eq:approx_primal_update}, set
   \begin{gather}
p_{k}= (1-\theta) \Pi_{{\cal K}^{*}}(p_{k-1}+\chi_{k}\beta_{k}g(z_{k})) \label{eq:gen_dual_update2}
\end{gather}
for some $\chi_k\in (0,1]$ and fixed $\theta\in[0,1)$, and choose the next penalty parameter $\beta_{k+1}$ from $[\beta_k,\infty)$.
A classical PAL method for the case where $f$ is convex has been studied by Rockafellar \cite{rockafellar1976augmented} under the assumption that $\theta = 0$, $\chi_k = 1$, and $\lam_k=\beta_k$ for every $k$. It is worth noting that
when $f$ is convex,
his method, as well as the aforementioned PAL method, can be
viewed as a primal-dual, variable stepsize,
inexact proximal point method, i.e, one which inexactly solves
\begin{equation}
\partial_z {\cal L}_0(z;p) + \frac1{\lam_k} (z-z_{k-1}) \ni 0,  \quad - \partial_p {\cal L}_0(z;p) + \frac{1}{\chi_k\beta_k} (p-p_{k-1}) \ni 0, \label{eq:pal_update}
\end{equation}
for $(z,p)=(z_k,p_k)$
where
${\cal L}_0(z;p) := (f+h)(z) + \inner{p}{g(z)} - \delta_{{\cal K}^*}(p)$, for every $(z,p) \in \Re^n \times \Re^\ell$ with
the convention that $+\infty-\infty=+\infty$, and $\delta_{{\cal K}^*}(p)$ takes value $0$ if $p\succeq_{{\cal K}^*} 0$ and $+\infty$ otherwise.
Note that system \eqref{eq:pal_update} is equivalent to
\[
\nabla f(z) + \partial h(z) + \nabla g(z) p + \frac1{\lam_k} (z-z_{k-1}) \ni 0,  \quad - g(z) + \partial \delta_{\cal K^*}(p) + \frac{1}{\chi_k\beta_k} (p-p_{k-1}) \ni 0.
\]

\item[(ii)] \textit{Non-proximal AL (n-PAL) methods} whose $k$-th iteration is: given a pair $(z_{k-1},p_{k-1})$ and a penalty parameter $\beta_{k}$,
    compute an approximate stationary point $z_k$ of ${\cal L}_{\beta_k}(\cdot;p_{k-1})$, set
    \begin{gather}
    p_{k}=\Pi_{{\cal K}^{*}}(p_{k-1}+\chi_{k}\beta_{k}g(z_{k})) \label{eq:gen_dual_update}
    \end{gather}
    for some $\chi_k\in(0,1]$, and choose the next penalty parameter $\beta_{k+1}$ from $[\beta_{k},\infty)$.
Detailed discussion of dual-only methods can be found, for example,
in \cite{bertsekas2016nonlinear} where the conditions 
$\beta_{k} > \beta_{k-1} > 0$ for all $k \ge 1$
and $\beta_k \uparrow \infty$ are assumed,
and in \cite{fletcher2013practical,nocedal2006numerical} where
$\beta_{k} = \beta_{k-1}$ is allowed at iterations
for which the feasibility gap decreases sufficiently.
It is worth noting that when $f$ is convex, these methods can be viewed as a dual-only,
variable stepsize, inexact proximal point method for the same
operator above, i.e., one which
inexactly solves
\begin{equation}
\partial_z {\cal L}_0(z;p) \ni 0,  \quad - \partial_p {\cal L}_0(z;p) + \frac{1}{\chi_k\beta_k} (p-p_{k-1}) \ni 0, \label{eq:n-pal_update}
\end{equation}
for $(z,p)=(z_k,p_k)$ and ${\cal L}_0(\cdot;\cdot)$ is as in (i).
\end{itemize}

Notice how both kinds of AL methods include a prox term in the $p$ block,
which leads to the multiplier update \eqref{eq:gen_dual_update}.
However, while the first one adds a proximal term to the $z$-block (hence
the qualifier PAL), the other ones do not (hence the qualifier n-PAL). For a more detailed comparison of the above classes, see the first paragraph in Section~\ref{sec:conclusion}.\\

\emph{Related works}.
The literature of AL-based methods is quite vast, so we focus our attention on those dealing with iteration complexities.
Since AL-based methods for the convex case have been extensively studied in the literature (see, for example, \cite{LanRen2013PenMet, Aybatpenalty, IterComplConicprog, AybatAugLag, LanMonteiroAugLag, ShiqiaMaAugLag16, zhaosongAugLag18, Patrascu2017, YangyangAugLag17}),
we focus on papers that deal with nonconvex problems with nontrivial composite functions Methods for the nonconvex problems where the composite $h$ is the zero function have already been studied in \cite{ProxAugLag_Ming,xie2019complexity}.

Papers \cite{HongPertAugLag,RenWilmelo2020iteration,RJWIPAAL2020} as well as this one propose and study the complexity of PAL methods
for solving the CCC-NCO problem or its linearly constrained version
in which ${\cal K}=\{0\}$.
More specifically, both papers \cite{HongPertAugLag,RJWIPAAL2020} consider
PAL methods applied to the linearly constrained CCC-NCO problem where $\theta \in (0,1]$ and
$\chi_k=1$ for every $k$. However, as $\theta$ approaches zero, the
prox stepsizes $\lam_k$ of both methods converge to zero
which causes the following issues:
1) their derived complexity bounds 
diverge to infinity (see
the second column in Table~\ref{tab:summary_tbl} below), which makes their analyses invalid for
the case where $\theta=0$; and 2) deteriorating computational performance.
Using a different approach, i.e., one that does not rely on a merit function, paper \cite{RenWilmelo2020iteration} establishes the iteration complexity of a PAL method, with $\theta=0$ and $\chi_k=1$ for every $k$,
for solving the
linearly constrained CCC-NCO problem under
the condition that $p_k$ is reset to zero whenever $\beta_k$ is increased.

Papers \cite{ImprovedShrinkingALM20, inexactAugLag19}
propose and study the iteration complexity of n-PAL methods
for solving nonlinearly constrained NCO problems.
More specifically, \cite{inexactAugLag19} uses the AG method of \cite{nonconv_lan16} to obtain the approximate
stationary point $z_k$ of ${\cal L}_{\beta_k}(\cdot;p_{k-1})$. On the other hand, \cite{ImprovedShrinkingALM20} obtains
such $z_k$ by applying 
an inner accelerated prox method as in
\cite{Aaronetal2017,WJRproxmet1} whose generated subproblems are convex
and similar to the ones generated by the PAL methods.
It is worth mentioning that both of these papers make a strong assumption about how the feasibility of an iterate is related to its stationarity (see condition ${\cal F}$ in Table~\ref{tab:conditions}).

We now describe other papers that have motivated this work or are tangentially related to it. Papers  \cite{WJRproxmet1, WJRComputQPAIPP, MinMax-RenWilliam, PPmetNonconvex2019} establish the complexity of quadratic penalty-based methods for solving \eqref{eq:main_prb}.
Paper \cite{Lan-ConstrainedStocasticProxMetNonconvex2019} considers a primal-dual proximal point scheme and analyzes its complexity under strong conditions on the initial point. Papers \cite{ErrorBoundJzhang-ZQLuo2020,ADMMJzhang-ZQLuo2020}  present a  primal-dual first-order algorithm for solving \eqref{eq:main_prb} when $h$ is the indicator function of a box (in \cite{ADMMJzhang-ZQLuo2020}) or more generally a polyhedron (in \cite{ErrorBoundJzhang-ZQLuo2020}).
Paper \cite{SZhang-Pen-admm} considers a penalty-ADMM method that solves an equivalent reformulation of \eqref{eq:main_prb}. 
Paper \cite{HybridPenaltyAugLag19} presents an inexact proximal point method applied to the function defined as $\phi(z)$ if $z$ is feasible and $+\infty$ otherwise. 
It can be viewed as an extension to the nonconvex setting
of the proximal point method (PPM) applied to
\eqref{eq:main_prb} (see, for example, \cite{rockafellar1976augmented} for the analysis of inexact versions of  PPMs for solving \eqref{eq:main_prb} in the convex setting).

Before closing this literature review, we list the assumptions of the above PAL and n-PAL methods in Table~\ref{tab:conditions} and give a summary of these methods in
Table~\ref{tab:summary_tbl}, which compares some of the more recent methods in terms of iteration complexity, type of constraints, necessary
conditions, and ranges of $\theta$ and $\chi_k$. 

\begin{table}[!tbh]
\begin{centering}
\begin{tabular}{c>{\raggedright}m{0.65\columnwidth}}
\toprule 
{\scriptsize{}${\cal B}$} & {\scriptsize{}Either (i) the quantity $\sup_{x\in\dom h}|\phi(x)|$
is finite, (ii) $\dom h$ is bounded, and/or (iii) the feasible set is bounded.}\tabularnewline
\midrule
{\scriptsize{}${\cal A}$} & {\scriptsize{}If the constraints have an affine component of the form $Ax=b$ then $A$ has full row rank.}\tabularnewline
\midrule
{\scriptsize{}${\cal F}$} & {\scriptsize{}There exists some $\nu>0$ such that $\nu\|g(x_{k})\|\leq{\rm dist}(0,\nabla g(x_{k})g(x_{k})+\beta_{k}^{-1}\pt h(x_{k}))$
for algorithmically generated sequences $\{x_{k}\}_{k\geq1}$ and
$\{\beta_{k}\}_{k\geq1}$.}\tabularnewline
\midrule
{\scriptsize{}${\cal N}$} & {\scriptsize{}
The function $h$ restricted 
to its domain is $r$-Lipschitz continuous.}\tabularnewline
\midrule
{\scriptsize{}${\cal SP}$} & {\scriptsize{}If $g(x)\preceq_{{\cal K}}0$ can be divided into $g_{e}(x)=0$
and $g_{\iota}(x)\preceq_{{\cal J}}0$ for some closed convex cone
${\cal J}$, then there exists $\bar{x}\in{\rm int}(\dom h)$ such
that $g_{e}(x)=0$ and $g_{\iota}(x)\prec_{{\cal J}}0$ .}\tabularnewline
\bottomrule
\end{tabular}
\par\end{centering}
\caption{Abbreviations for common boundedness and regularity conditions. A
discussion of the relationship between ${\cal SP}$ and ${\cal SP}^{\circ}$
is given in Subsection~\ref{subsec:prb_of_interest}. It is known (see, for example,
\cite{RenWilmelo2020iteration}) that ${\cal N}$ is equivalent to requiring that, for every $x\in\dom h$, there exists $r>0$ such that $\pt h(x)\subseteq{\cal N}_{\dom h}(x)+{\cal B}_{r}(0)$ where
${\cal B}_{r}(0)=\{x:\|x\| \leq r\}$. 
\label{tab:conditions}}
\end{table}

\begin{table}[!tbh]
\begin{centering}
\begin{tabular}{ccccccc}
\toprule 
\textbf{\scriptsize{}Name} & \textbf{\scriptsize{}Complexity} & \textbf{\scriptsize{}Constraints} & \textbf{\scriptsize{}$\theta$} & \textbf{\scriptsize{}$\chi_k$} & \textbf{\scriptsize{}Key Conditions} & \textbf{\scriptsize{}AL group}\tabularnewline
\midrule
{\scriptsize{}PProx-PDA\tablefootnote{This method generates prox subproblems of the form $\argmin_{x\in X}\{\lam h(x) + c\|Ax-b\|^2 / 2 + \|x-x_0\|^2 / 2 \}$ and the analysis of \cite{HongPertAugLag}
makes the strong assumption that they can be solved exactly for any $x_0$, $c$, and $\lam$.} \cite{HongPertAugLag}} & {\scriptsize{}${\cal O}(\theta^{-2}\varepsilon^{-4})$} & {\scriptsize{}Linear} & {\scriptsize{}$(0,1)$} & {\scriptsize{}$1$} & {\scriptsize{}${\cal B}$, ${\cal A}$} & {\scriptsize{}PAL} \tabularnewline
{\scriptsize{}$\theta$-IPAAL\tablefootnote{It is also shown that conditions $\cal N$ and $\cal SP$ can be removed to yield an iteration complexity of $\tilde{{\cal O}}(\theta^{-4}\varepsilon^{-3})$.} \scriptsize{} \cite{RJWIPAAL2020}} & {\scriptsize{}$\tilde{{\cal O}}(\theta^{-15/4}\varepsilon^{-2.5})$} & {\scriptsize{}Linear} & {\scriptsize{}$(0,1)$} & {\scriptsize{}$1$} & {\scriptsize{}${\cal N},{\cal SP}$} & {\scriptsize{}PAL} \tabularnewline
{\scriptsize{}IAIPAL \cite{RenWilmelo2020iteration}} & \textbf{\scriptsize{}$\tilde{{\cal O}}(\varepsilon^{-3})$} & {\scriptsize{}Linear} & {\scriptsize{}$0$} & {\scriptsize{}$1$} & \textbf{\scriptsize{}${\cal B},{\cal N},{\cal SP}$} & {\scriptsize{}PAL} \tabularnewline
\midrule
{\scriptsize{}iALM (2019) \cite{inexactAugLag19}} & {\scriptsize{}$\tilde{{\cal O}}(\varepsilon^{-3})$} & {\scriptsize{}Nonlinear} & {\scriptsize{}-} & {\scriptsize{}${\cal O}(\beta_k^{-1})$} & {\scriptsize{}${\cal B},{\cal F}$} & {\scriptsize{}n-PAL}  \tabularnewline
{\scriptsize{}iALM (2020)\tablefootnote{An $\tilde{{\cal O}}(\varepsilon^{-2.5})$ iteration complexity bound is established for the case where the constraints are linear. Moreover, the method considered in this table is Algorithm 3 of \cite{ImprovedShrinkingALM20} where it is shown
that the associated sequence of multipliers is bounded under assumption ${\cal F}$.} \cite{ImprovedShrinkingALM20}} & {\scriptsize{}$\tilde{{\cal O}}(\varepsilon^{-3})$} & {\scriptsize{}Nonlinear} & {\scriptsize{}-} & {\scriptsize{}${\cal O}(\beta_k^{-1})$} & {\scriptsize{}${\cal B},{\cal F}$} & {\scriptsize{}n-PAL} \tabularnewline
\textbf{\scriptsize{}NL-IAPIAL} & \textbf{\scriptsize{}$\tilde{{\cal O}}(\varepsilon^{-3})$} & \textbf{\scriptsize{}${\cal K}$-Convex} & {\scriptsize{}$0$} & {\scriptsize{}$1$} & \textbf{\scriptsize{}${\cal B},{\cal N},{\cal SP}$} & {\scriptsize{}PAL} \tabularnewline
\bottomrule
\end{tabular}
\par\end{centering}
\caption{Comparison of relevant PAL and n-PAL methods with NL-IAPIAL where the first three methods assume that $g$ is an affine function of the form $g(x)=Ax-b$. For simplicity, we let $\varepsilon = \min\{\hat\rho,  \hat\eta\}$,  and let ${\tilde{\cal O}}(\cdot)$ be the same as ${{\cal O}}(\cdot)$ with all logarithmic dependencies on $\varepsilon$ removed.}
\label{tab:summary_tbl}
\end{table}

\textit{Contributions}.
We start by highlighting the differences and novelties of
the NL-IAPIAL
compared to the ones in
\cite{HongPertAugLag,RJWIPAAL2020,RenWilmelo2020iteration}.
In contrast to
the PAL methods of \cite{HongPertAugLag,RJWIPAAL2020},
whose iteration-complexities in terms of $\theta$ only (see the second column in Table~\ref{tab:summary_tbl})
are ${\cal O}(\theta^{-2})$ and ${\cal O}(\theta^{-15/4})$, respectively,
this work presents a PAL method and its corresponding
iteration-complexity, both of which
do not depend on $\theta$.
Moreover,
its analysis only assumes the existence of a Slater point and
its multiplier update uses $\theta=0$ and $\chi_k=1$ for every $k$, as prescribed in the classical versions
of both PAL and n-PAL methods.
In contrast to \cite{RenWilmelo2020iteration}
(see the end of the second paragraph of \textit{Related Works}),
our proposed PAL method has the following extra features:
1) it always updates $p_k$ as in \eqref{eq:dual_update}, regardless of whether $\beta_k$ increases or not; and
2) it solves
the more general nonlinear CCC-NCO problem.

Even though NL-IAPIAL is not an n-PAL method, it is still worth discussing 
some of its features relative to
the n-PAL methods of \cite{ImprovedShrinkingALM20,inexactAugLag19}.
First, in contrast to \cite{ImprovedShrinkingALM20,inexactAugLag19}, this work  does
not assume
the strong condition ${\cal F}$
of Table~\ref{tab:conditions} on the iterates generated by their methods (see the fifth column of Table~\ref{tab:summary_tbl}).
Second, in contrast to the methods in \cite{ImprovedShrinkingALM20,inexactAugLag19} whose choices of $\chi_k$ in \eqref{eq:gen_dual_update2} converge to zero as $\beta_k$ tends to infinity\footnote{Methods with this feature tend to become more like penalty-type methods as more iterations are performed.}, NL-IAPIAL chooses $\chi_k=1$ for every $k$ (see the sixth columns of Table~\ref{tab:summary_tbl}). 

Additional discussion of how NL-IAPIAL compares with other related first-order methods that are neither PAL nor n-PAL methods (i.e., \cite{HybridPenaltyAugLag19,ADMMJzhang-ZQLuo2020, ErrorBoundJzhang-ZQLuo2020}) is given in Section~\ref{sec:conclusion}. \\

\emph{Organization of the Paper.}
Subsection~\ref{subsec:notation}  provides some basic definitions and notation. Section~\ref{sec:aug_lagr} contains three subsections. The first one describes the main problem of interest and the assumptions made on it. The second one motivates and  states the NL-IAPIAL method, whereas the third one presents  its main  complexity results. Section~\ref{sec:proofs of propositions} is divided into two subsections. The first one proves Proposition~\ref{lem:StaticIPAAL}(b)--(c) which presents  iteration-complexity bounds for  NL-IAPIAL. The second one proves Proposition~\ref{th:pkbounded} which gives a bound on the  multipliers sequence generated by NL-IAPIAL.  Section~\ref{sec:numerical} is devoted to numerical experiments that illustrate the numerical efficiency of NL-IAPIAL. Section~\ref{sec:conclusion} gives several concluding remarks. The Appendix section contains three subsections.  Appendix~\ref{sec:acg}  reviews an ACG variant,
Appendix~\ref{app:cvx} describes some basic convex analysis
results, and Appendix~\ref{app:refine} is devoted to  the proof
of a basic result considered in the main part of the paper.

\subsection{Basic Definitions and Notations} \label{subsec:notation}

This subsection presents   notation and basic definitions used in this paper.
	
Let $\Re_{+}$  and $\Re_{++}$ denote the
set of nonnegative and positive
real numbers, respectively, and
let $\Re^2_{++}:=\Re_{++}\times\Re_{++}$. We denote by  $R^n$  an $n$-dimensional inner product space with inner product and associated norm denoted by $\inner{\cdot}{\cdot}$ and $\|\cdot\|$, respectively. 
For a given closed convex set $Z \subset \Re^n$, its boundary is denoted by $\partial Z$ and the distance of a point $z \in \Re^n$  to $Z$ is denoted by ${\rm dist}(z,Z)$. The indicator function of $Z$, denoted by $\delta_Z$, is defined by $\delta_Z(z)=0$ if $z\in Z$, and $\delta_Z(z)=+\infty$ otherwise.
 For any $t>0$, we let $\log_1^+(t):=\max\{\log t, 1\}$,
and we  define ${\cal O}_1(\cdot) = {\cal O}(1 + \cdot)$.

The domain of a function $h :\Re^n\to (-\infty,\infty]$ is the set   $\dom h := \{x\in \Re^n : h(x) < +\infty\}$.
Moreover, $h$ is said to be proper if
$\dom h \ne \emptyset$. The set of all lower semi-continuous proper convex functions defined in $\Re^n$ is denoted by $\cConv \rn$. The $\varepsilon$-subdifferential of a proper function $h :\Re^n\to (-\infty,\infty]$ is defined by 
\begin{equation}\label{def:epsSubdiff}
\partial_\varepsilon h(z):=\{u\in \Re^n: h(z')\geq h(z)+\inner{u}{z'-z}-\varepsilon, \quad \forall z' \in \Re^n\}
\end{equation}
for every $z\in \Re^n$.	The classical subdifferential, denoted by $\partial h(\cdot)$,  corresponds to $\partial_0 h(\cdot)$.  
Recall that, for a given $\varepsilon\geq 0$, the $\varepsilon$-normal cone of a closed convex set $C$ at $z\in C$, denoted by  $N^{\varepsilon}_C(z)$, is 
$$N^{\varepsilon}_C(z):=\{\xi \in \Re^n: \inner{\xi}{u-z}\leq \varepsilon,\quad \forall u\in C\}.$$
The normal cone of a closed convex set $C$ at $z\in C$ is denoted by  $N_C(z)=N^0_C(z)$.
If $\psi$ is a real-valued function which
is differentiable at $\bar z \in \Re^n$, then its affine   approximation $\ell_\psi(\cdot,\bar z)$ at $\bar z$ is given by
\begin{equation}\label{eq:defell}
\ell_\psi(z;\bar z) :=  \psi(\bar z) + \inner{\nabla \psi(\bar z)}{z-\bar z} \quad \forall  z \in \Re^n.
\end{equation}
For a closed convex cone $\cK\subset \Re^l$, the dual cone $\cK^{*}$ is defined as
\begin{align}
\cK^{*}:=\left\{ y\in\Re^l:\left\langle y,x\right\rangle \geq0,\, x\in\cK\right\}.
\end{align}
For given $u,v\in \Re^l$,
the notation  $u\preceq_{\cK} v$ (or $v\succeq_{\cK} u$)
means that $v-u\in \cK$. 
Moreover,  the notation  $u\prec_{\cK} v$ means that  $v-u\in \inte \cK$. 
  A function $g:\Re^n\to\Re^\ell$ is  said to be $\cK$-convex if
\begin{equation}\label{def:Kconvexity}
g(t z' + [1-t] z) - t g(z') - [1-t] g(z) \preceq_{\cK} 0 \quad \forall z, z'\in\rn, \, \forall t \in [0,1].
\end{equation}
Under the assumption that $g$ is differentiable, it is well-known that $g$ is $\cK$-convex if and only if
\begin{equation}\label{ineq:gradineq-Kconvexity}
\inner {p}{g'(z)(z'-z)} \leq \inner{p}{g(z')-g(z)}
\quad \forall z, z'\in\rn, \, \forall p \in {\cK}^*.
\end{equation}

\section{The NL-IAPIAL Method} \label{sec:aug_lagr}

This section consists of three subsections. The first one precisely describes the problem of interest and its assumptions. The second one motivates and states the NL-IAPIAL method. The third one presents the main complexity results for NL-IAPIAL.

\subsection{Problem of Interest} \label{subsec:prb_of_interest}

This subsection presents the main problem of interest and discusses the assumptions underlying it.

Consider problem \eqref{eq:main_prb}
where ${\cal K}$ is a closed convex cone such that
$\emptyset \ne {\cal K} \ne \R^l$, and
functions $f,g$ and $h$ satisfy the following
assumptions:
\begin{itemize}
\item[(A1)] $h\in\cConv\rn$ and its domain
${\cal H}:=\dom h$ is a compact set; moreover,
for some scalar $K_{h} \ge  0$, function $h$ is
$K_{h}$-Lipschitz continuous on ${\cal H}$, i.e.,
it satisfies
\[
|h(z')- h(z)| \le K_h \|z'-z\| \qquad \forall z, z' \in {\cal H};
\]
\item[(A2)] $f$ is a nonconvex function which is differentiable on ${\cal H}$, and there exist $0<m_{f} \le L_{f}$
such that $f$ is $m_f$-weakly convex on $\mathcal{H}$
(i.e., $f+m_f\|\cdot\|^2/2$ is convex on $\mathcal{H}$)  and
\begin{gather}
\|\nabla f(z') - \nabla f(z) \| \leq L_f \|z'-z\|
\qquad \forall z', z \in \mathcal{H}; \label{gradLips}
\end{gather}
\item[(A3)] $g:\r^n \mapsto \r^{\ell}$ is $\cK$-convex and differentiable, and there
exists $L_{g} > 0$ such that 
\[
\|\nabla g(z') - \nabla g(z)\|\leq L_{g}\|z'-z\|\qquad\forall z',z\in\rn;
\]
\item[(A4)] there exist $\bar{z}\in \inte{\cal H}$ and
$\tau_g >0$ such that $g(\bar z)\preceq_\cK 0$ and 
\begin{equation}
\max\left\{ \|\nabla g(z)p\|,\left|\left\langle p,g(\bar{z})\right\rangle \right|\right\} \geq \tau_g\|p\|\qquad\forall z\in{\cal H},\enskip \forall p \succeq_{{\cal K}^{*}} 0.\label{eq:gen_slater}
\end{equation}
\end{itemize}

We now make some comments about the above assumptions.
First,
any function $h$ of the form $h=\tilde h + \delta_Z$
where $\tilde h$ is a finite everywhere Lipschitz continuous
convex function
and $Z$ is a compact convex set clearly satisfies condition (A1).
Second, it is easy to see that (A2) implies that
\begin{equation}
-\frac{m_{f}}{2}\|z'-z\|^{2}\leq f(z')-
\ell_f(z'; z) \quad \forall z',z \in \mathcal{H},\label{lowerCurvature-m}
\end{equation}
where $\ell_f(\cdot; \cdot)$ is as in \eqref{eq:defell}.
Moreover, 
it is well-known that \eqref{gradLips} implies that 
$ |f(z') -\ell_f(z';z) | \le  L_f\|z'-z \|^2/2$ for every $z,z' \in {\mathcal H}$, and hence that \eqref{lowerCurvature-m} holds with $m_f=L_f$. However, we will show that 
better
iteration-complexity bounds for our method
can be derived when a scalar
$m_f< L_f$ 
satisfying \eqref{lowerCurvature-m}
is available. Third,  since $f$ is nonconvex on ${\mathcal H}$, (A2) implies the smallest $m_f$ satisfying  \eqref{lowerCurvature-m} is positive. Fourth, the assumption that ${\cal K} \ne \r^l$
 implies that ${\cal K}^* \ne \{0\}$. Finally, the cone $\cal K$ is not assumed to have a nonempty interior.

The result below, whose proof is given in Appendix~\ref{app:slater}, shows that if ${\cal K} = {\cal J}\times\{0\}$ where
$\cal J$ is a closed convex cone such that $\intr {\cal J}\neq \emptyset$, then (A4) is equivalent to a Slater-like assumption with respect to $g$. Hence, (A4) is a mild assumption on \eqref{eq:main_prb}.

\begin{prop} 
\label{prop:weak_slater}
Suppose ${\cal J}\subseteq\r^{s}$ is a closed
convex cone with nonempty interior, $g_{\iota}:\rn\mapsto\r^{s}$
is a (possibly nonconvex) continuously differentiable function, and
$g_{e}:\rn\mapsto\r^{t}$ is an onto affine map. Moreover, suppose
$\nabla g_{\iota}(\cdot)$ is $L_{g_{\iota}}$-Lipschitz continuous on
the set ${\cal H}$ defined in (A1), and let $g:=(g_{\iota},g_{e})$ and ${\cal K}:= {\cal J}\times\{0\}$.
Then, the following statements are equivalent:
\begin{itemize}
\item[(a)] there exists $\tau_{g}>0$ and $\bar{z}\in\intr{\cal H}$ such that
$g(\bar{z})\preceq_{{\cal K}}0$ and \eqref{eq:gen_slater} holds;
\item[(b)] there exists $\tilde{\tau}_{g}>0$ and $\bar{z}\in\intr{\cal H}$
such that $g(\bar{z})\preceq_{{\cal K}}0$ and
\begin{equation}
\max\left\{ \|\nabla g(\bar{z})p\|,|\left\langle p,g(\bar{z})\right\rangle |\right\} \geq\tilde{\tau}_{g}\|p\|\quad\forall p\succeq_{{\cal K}^{*}}0;\label{eq:sp0}
\end{equation}
\item[(c)] there exists $\bar{z}\in \intr{\cal H}$ such that $g_{\iota}(\bar{z})\prec_{{\cal J}}0$
and $g_{e}(\bar{z})=0$;
\end{itemize}
\end{prop}

Some comments about Proposition~\ref{prop:weak_slater} are in order.
First, if $g_\iota$ is ${\cal J}$-convex and $g_e$ is affine, then
 $g$ is $\cK$-convex. 
Second, the Slater condition is in regard to a single point $\bar z\in\cH$, as opposed to condition \eqref{eq:gen_slater} which involves inequality \eqref{eq:gen_slater} at all pairs $(z,p)\in\cH\times\cK^*$. Third, (A4) can be replaced by
the Slater-like assumption
of Proposition~\ref{prop:weak_slater} 
when ${\cal K}={\cal J}\times\{0\}$ since
the former is equivalent to the latter in this case.
Actually, a slightly more involved analysis
can be done to show that the assumption that $g_e$ is onto
(which is part of the assumption of Proposition~\ref{prop:weak_slater})
can be removed 
at the expense of obtaining a weaker version of (A4), namely:
inequality \eqref{eq:gen_slater}  holds for every pair $(z,p) \in \cH \times ({\cal J}^* \times {\rm Im}\,\nabla g_e)$, instead of 
$(z,p) \in \cH \times ({\cal J}^* \times \r^t) = \cH \times \cK^*$.
Finally, since the analysis of this paper can be easily adapted
to this slightly weaker version of (A4), 
the Slater-like condition of Proposition~\ref{prop:weak_slater}
without $g_e$ assumed to be onto (or equivalently, $\nabla g_e$
to have full column rank)
can be used in place of (A4) in order to guarantee that all of
the results derived in this paper for NL-IAPIAL hold.

Under assumptions (A1)--(A4), it can be shown that: (i) a necessary condition  for a point $z^{*}$ to be a local minimum of \eqref{eq:main_prb} is that there exists a  multiplier $p^*\in\cK^*$ satisfying
\begin{equation}
\begin{gathered}0 \in \nabla f(z^{*})+\pt h(z^{*})+\nabla g(z^{*})p^{*},
\quad \inner{g(z^*)}{p^*}=0, \quad g(z^*)\preceq_{\cK} 0, \quad p^{*} \succeq_{{\cal K}^{*}} 0;
\end{gathered}
\label{eq:stationary_soln}
\end{equation}
and (ii) the last three conditions in \eqref{eq:stationary_soln}  are equivalent\footnote{See Lemma~\ref{lem:dist_props}(c).}
to the inclusion $g(z^*) \in N_{\cK^*}(p^*)$.
The following definition describes the type of  approximate solution of \eqref{eq:main_prb} that is sought after by the NL-IAPIAL method.

\begin{definition}\label{def:stationarypoint}
	Given a tolerance pair  $(\hat\rho,\hat\eta)\in \Re_{++}\times\Re_{++}  $, a quadruple $(\hat z, \hat p,\hat w,\hat q)\in {\mathcal H}\times\Re^l\times\Re^n\times \Re^l$ is said to be a $(\hat\rho,\hat\eta)$-approximate stationary  quadruple of \eqref{eq:main_prb} if it satisfies \eqref{eq:rho_eta_approx_soln} and \eqref{ineq:rho_eta_approx_soln}.
\end{definition}
	
We now make some observations about Definition~\ref{def:stationarypoint}.
Another notion of approximate stationarity for \eqref{eq:main_prb} is as
follows:
a pair $(\hat z,\hat p)\in {\mathcal H}\times \Re^l$ is a $(\hat\rho,\hat\eta)$-approximate
stationary solution of \eqref{eq:main_prb} if it
satisfies
the inequalities
\begin{equation}\label{ineq:alternative-stationary-solution}
{\rm dist}\,(0,\nabla f(\hat{z})+\pt h(\hat{z})+\nabla g(\hat{z})\hat{p})\le \hat \rho, \qquad {\rm{dist}}(g(\hat z), N_{\cK^*}(\hat{p})) \leq \hat \eta.
\end{equation}
It turns out that
$(\hat z,\hat p)$
is a \textit{$(\hat\rho,\hat\eta)$-approximate
stationary solution} in the above sense if and only if
there exists a residual pair $(\hat w, \hat q)\in \Re^n\times \Re^l$
such that $(\hat z, \hat p,\hat w,\hat q)$ is a 
$(\hat\rho,\hat\eta)$-approximate stationary quadruple
of \eqref{eq:main_prb}.
In this regard, the
residual pair $(\hat w, \hat q)$ in Definition~\ref{def:stationarypoint} can be
viewed as 
a certificate that the pair $(\hat z,\hat p)$
in the same
definition is
a $(\hat\rho,\hat\eta)$-approximate stationary
solution of \eqref{eq:main_prb}.
Finally, our analysis is entirely based on
the notion of Definition~\ref{def:stationarypoint}
even though it could also have been  carried out
using the notion of
a $(\hat\rho,\hat\eta)$-approximate stationary
solution instead. The main reason for this choice
is that
the NL-IAPIAL method presented in
Subsection~\ref{sec:NLIAPIAL} naturally generates
residual pairs which always satisfy \eqref{eq:rho_eta_approx_soln}, and eventually
\eqref{ineq:rho_eta_approx_soln} after a
sufficient number of iterations.
Moreover, as opposed to
the residual pairs which ``realize'' the two distances in \eqref{ineq:alternative-stationary-solution},
the computation of these residual pairs do
not require projections onto
$\partial h(\hat z)$ or $N_{\cK^*}(\hat{p})$.

We end this subsection by stating a technical result
which describes some properties about
the smooth part of the Lagrangian in \eqref{eq:aug_lagr_def}.

\begin{lem}
\label{lem:dist_smoothness}
Assume that conditions (A2) and (A3) hold, and define the function
\begin{gather}
\widetilde{{\cal L}}_{\beta}(z,p) := f(z) + \frac{1}{2\beta}\left[{\rm dist}^{2}(p+\beta g(z),-{\cal K}) - \|p\|^2 \right] \quad \forall (z,p,\beta)\in \rn \times \r^{\ell}\times\r_{++} \label{eq:L_tilde_def}
\end{gather}
and the quantities
\begin{equation}
B_g^{(0)}:= \sup_{z\in{\cal H}} \|g (z)\|, \quad 
B_g^{(1)}:= \sup_{z\in{\cal H}} \|\nabla g (z)\|. \label{eq:bd_Psi_val}
\end{equation}
Then, for every $\beta >0$ and $p\in\r^{\ell}$, the following properties hold:
\begin{itemize}
\item[(a)] $\widetilde{{\cal L}}_{\beta}(\cdot, p)$ is $m_f$-weakly convex on $\mathcal{H}$, where $m_f$ is as in (A2);

\item[(b)] $\widetilde{{\cal L}}_{\beta}(\cdot, p)$ is  a  differentiable function whose gradient is given by
\[
\nabla_z\widetilde{{\cal L}}_{\beta}(z,p)=\nabla f(z) + \nabla g(z)\Pi_{{\cal K}^{*}}(p+\beta g(z)) \quad \forall z\in\rn;
\]

\item[(c)] $\nabla_z \widetilde{{\cal L}}_{\beta}(\cdot, p)$ is $\widetilde{\cal M}$-Lipschitz continuous where
\begin{equation}
    \widetilde{\cal M}  =  \widetilde{\cal M}(\beta,p) := L_f + L_{g}\|p\|+\beta M_g,
    \quad {M_g} := B_g^{(0)} L_{g} + [B_g^{(1)}]^2,
    \label{eq:Lipsc_tilde_def}
\end{equation}
and the quantities $L_f$ and  $L_g$ are as in (A2) and (A3), respectively.
\end{itemize}
\end{lem}

\begin{proof}
The statements of the lemma with $f\equiv 0$ (and hence $m_f=L_f=0$)
immediately follow from
\cite[Proposition 5]{zhaosongAugLag18}.
Hence,  the general case of the lemma easily follows  from  assumption (A2) and the definition of $\tilde {\cal L}_\beta$ in \eqref{eq:L_tilde_def}.
\end{proof}

\subsection{The NL-IAPIAL Method}\label{sec:NLIAPIAL}

This subsection  motivates and states the NL-IAPIAL method.  

Before presenting the method, we give a short but precise outline of its key steps, as well as a description of how its iterates are generated.
Recall from the introduction that the NL-IAPIAL method,
whose goal is to find a $(\hat \rho, \hat \eta)$-approximate stationary quadruple as in \eqref{eq:rho_eta_approx_soln} and \eqref{ineq:rho_eta_approx_soln}, is an iterative method which, at its $k$-th step, computes its next iterate $(z_k, p_k)$ according to \eqref{eq:approx_primal_update} and \eqref{eq:dual_update}.

We now describe the conditions which are required on the approximate solution $z_k$ of \eqref{eq:approx_primal_update}.
For a given scalar  $\sigma \in (0,1/\sqrt{2}]$, NL-IAPIAL requires that 
$z_k$,
together with a  residual
pair  $(v_k,\varepsilon_k)\in \R^n\times \R_{++}$,
satisfy
\begin{equation}
v_{k}\in\partial_{\varepsilon_{k}}\left(\lambda{\cal L}_{\beta_{k}}(\cdot,p_{k-1})+\frac{1}{2}\|\cdot-z_{k-1}\|^{2}\right)(z_{k}),\quad\|v_{k}\|^{2}+2\varepsilon_{k}\leq\sigma_k^{2}\|v_k + z_{k-1} - z_{k}\|^{2}. \label{eq:prox_incl}
\end{equation}
where
\begin{equation}\label{eq:acg_input_aux_defs}
\begin{gathered}
\sigma_k :=\frac{\sigma}{\sqrt{\widetilde{\cal M}_k}}, \qquad \widetilde{\cal M}_k :=\lam \widetilde{\cal M}(\beta_k, p_{k-1}) + 1,
\end{gathered}
\end{equation}
and $\widetilde{\cal M}(\cdot,\cdot)$ is as in \eqref{eq:Lipsc_tilde_def}.
Note that if $\sigma=0$ then the inequality in \eqref{eq:prox_incl} implies that
$(v_k,\varepsilon_k)=(0,0)$, and hence that
$z_k$ is a global solution of \eqref{eq:approx_primal_update} in view of the inclusion in \eqref{eq:prox_incl} and the definition of $\varepsilon$-subdifferential given in \eqref{def:epsSubdiff}. By relaxing
$\sigma$ to be positive, we are then allowing $z_k$ to be an
inexact (global) solution of \eqref{eq:approx_primal_update}.

The following result now describes a way of computing the
approximate triple $(z_k,v_k,\varepsilon_k)$ as in the above paragraph. Its proof strongly relies on the fact that
$z_{k-1}$ is chosen to be the initial point for the ACG variant
(see the fifth identity in \eqref{eq:acg_inputs}) and 
Proposition~\ref{prop:nest_complex} of Appendix~\ref{sec:acg}.

\begin{lemma}\label{lem:kthACGbound}
Let  $\lambda=1/(2m_f)$ where $m_f$ is  as in (A2), and  define 
\begin{align}
\begin{gathered}
\psi_{s}=\protect\lam \widetilde{{\cal L}}_{\beta_k}(\cdot, p_{k-1}) +\frac{1}{2}\|\cdot-z_{k-1}\|^{2},\quad\psi_{n}=\protect\lam h,\\[1mm] 
\widetilde M = \widetilde{\cal M}_k, \quad \widetilde\mu = \frac{1}{2}, \quad x_{0}=z_{k-1},\quad\tilde{\sigma}=\sigma_k,
\end{gathered} \label{eq:acg_inputs}
\end{align}
where $\widetilde{\cal M}_k$ is as in \eqref{eq:acg_input_aux_defs}.  Then, 
the ACG algorithm of Appendix~\ref{sec:acg}, with inputs given by \eqref{eq:acg_inputs}, computes a triple $(z_k,v_k,\varepsilon_k):=(y,u,\eta)$ satisfying    \eqref{eq:prox_incl} in a number of ACG iterations bounded by
\begin{equation}\label{eq:lemACGbound}
\left\lceil5\sqrt{\widetilde{\cal M}_k}\log_{1}^{+}\left(\frac{4\widetilde{\cal M}_k}{\sigma}\right) \right\rceil.
\end{equation}

\end{lemma}
\begin{proof}
We first show that the inputs in \eqref{eq:acg_inputs} satisfy conditions (B1)--(B2) in Appendix~\ref{sec:acg}. Indeed, using assumption (A1) and Lemma~\ref{lem:dist_smoothness}(a),
it is easy to see that both 
$\psi_s + (\lam m_f-1)\|\cdot\|^2/2$ and $\psi_n$ are convex.
Since $\lambda=1/(2m_f)$, it then follows that
$\psi_s$ is a $1/2$-strongly convex and hence that $\tilde \mu$ satisfies the first inequality in \eqref{ineqs in Assump B2}.
Now, in view of Lemma~\ref{lem:dist_smoothness}(c) and the definition of $\psi_s$ in \eqref{eq:acg_inputs}, it follows that $\widetilde M$ satisfies the second inequality in \eqref{ineqs in Assump B2}. Hence, we conclude that the inputs in \eqref{eq:acg_inputs} satisfy  the conditions (B1)--(B2) in Appendix~\ref{sec:acg}. 

We now derive the desired complexity bound. It follows from Proposition~\ref{prop:nest_complex} and the above result that the ACG algorithm of Appendix~\ref{sec:acg}  with inputs  given by \eqref{eq:acg_inputs} generates a triple $(z_k,v_k,\varepsilon_k):=(y,u,\eta)$ satisfying \eqref{eq:prox_incl} in at most 
\begin{equation}\label{eq:auxACGcomplexity00}
\left\lceil 1 + \left(\frac{1}{2}+\sqrt{2\widetilde{\cal M}_k-1}\right)\log_{1}^{+}\widetilde{\cal A}\right\rceil
\end{equation}
iterations, where 
$\widetilde{\cal A}=4(1+\widetilde\sigma)^{2}(\widetilde{\cal M}_k-1/2)\widetilde\sigma^{-2}.
$
Now, note that  the definitions of $\sigma_k$ and $\widetilde\sigma$ in \eqref{eq:acg_input_aux_defs} and  \eqref{eq:acg_inputs}, respectively,  yield 
$\widetilde{\cal A} \leq 16(\widetilde{\cal M}_k)^2\sigma^{-2}$. Hence,   \eqref{eq:lemACGbound} follows from \eqref{eq:auxACGcomplexity00},  the latter inequality, and the fact that   $\log^+_1(\cdot)\geq 1$ and $\widetilde{\cal M}_k\geq 1$.
\end{proof}

It is worth mentioning that
the main effort of an ACG iteration
consists of: (i) the computation of
$\nabla \psi_s(\tilde x_j) $ where
$\tilde x_j$ is one of the iterates
obtained in the $j$-th iteration
of ACG
(see \eqref{eq:prox-acg}); and,
(ii)
the solution of
the prox subproblem in \eqref{eq:prox-acg}.
Its description given in Appendix~\ref{sec:acg} assumes
that both (i) and (ii) can be carried out exactly with the aid
of given oracles.
Moreover, for the case where the functions
$\psi_s$ and $\psi_n$ are chosen as in \eqref{eq:acg_inputs}, it follows
from Lemma~\ref{lem:dist_smoothness}(b) that 
$$ \nabla \psi_s(z)=\lam \left[ \nabla f(z) + \nabla g(z)\Pi_{{\cal K}^{*}}(p_{k-1}  +\beta_k g(z)) \right] +z-z_{k-1}.$$
Finally, since we make the blanket assumption that an oracle for exactly evaluating $\Pi_{{\cal K}^{*}}(\cdot)$ at any given point is available, it follows that $\nabla \psi_s(x)$
can be obtained exactly by means of
the above formula. 

We are now ready to provide a complete description
of the NL-IAPIAL method.

\noindent\begin{minipage}[t]{1\columnwidth}%
\rule[0.5ex]{1\columnwidth}{1pt}

\noindent \textbf{NL-IAPIAL Method}

\noindent \rule[0.5ex]{1\columnwidth}{1pt}%
\end{minipage}

\noindent \textbf{Input}: a function triple $(f,g,h)$ and
a quadruple of parameters $(K_h,m_f,L_f,L_g)$  satisfying assumptions (A1)--(A4), a scalar
$\sigma\in(0,1/\sqrt{2}]$, a penalty parameter $\beta_1>0$, an initial
pair  $(z_{0},p_0)\in {\cal H}\times \Re^l$, and a tolerance pair $(\hat{\rho},\hat{\eta})\in\r_{++}^{2}$;

\noindent \textbf{Output}: a triple $(\hat{z},\hat{p},\hat{w},\hat{q})$ satisfying \eqref{eq:rho_eta_approx_soln}-\eqref{ineq:rho_eta_approx_soln};

\begin{itemize}
\item[{\bf 0.}] set  $\hat k=0$,  $k=1$ and
\begin{equation}\label{def:lamb-C1}
\lam=\frac{1}{2m_{f}}, \quad \beta =\beta_1,\quad C_\sigma= \frac{2(1+2\sigma)^2}{1-\sigma^2};
\end{equation}

\item[{\bf 1.}]  use the ACG described in 
Appendix~\ref{sec:acg}
with inputs $( \widetilde M, \widetilde \mu, \psi_s, \psi_n)$,
$x_0$ and $\tilde{\sigma}$ given by \eqref{eq:acg_inputs} to obtain a triple $(z_{k},v_{k},\varepsilon_{k}):=(y,u,\eta)$ satisfying \eqref{eq:prox_incl}, and compute
\begin{equation}\label{eq:dual_update2}  
p_{k}  :=\Pi_{\cK^{*}}(p_{k-1}+{\beta_k}g(z_{k})), \qquad r_{k}:=v_{k}+z_{k-1}-z_{k};
\end{equation}
\item[{\bf 2.}] compute the point $\hat z_k$ as
\begin{equation}
\hat{z}_{k}:=\argmin_{u}\left\{ \lam\left[\left\langle \nabla_z \widetilde{{\cal L}}_{\beta_k} (z_{k}, p_{k-1}),u-z_{k}\right\rangle +h(u)\right]-\left\langle r_{k},u-z_{k}\right\rangle +\frac{\widetilde{\cal M}_k}{2}\|u-z_{k}\|^{2}\right\}, \label{eq:z_k_def}
\end{equation}
and the triple $(\hat{p}_{k},\hat{w}_{k},\hat{q}_k)$ as
\begin{gather}
\begin{aligned}
\hat{p}_{k} & :=\Pi_{{\cal K}^{*}}\left(p_{k-1}+\beta_{k}g(\hat{z}_{k})\right),  \\
\hat{w}_{k} & :=w_{k}+\nabla_z\widetilde{{\cal L}}_{\beta_k} (\hat{z}_{k}, p_{k-1})-\nabla_z\widetilde{{\cal L}}_{\beta_k} (z_{k}, p_{k-1}),\\
\hat{q}_k &:= \frac{1}{\beta_{k}}(p_{k-1} - \hat{p}_k),
\end{aligned}
\label{eq:refined_points}
\end{gather}
where  $\widetilde{\cal M}_k$ and $\widetilde{{\cal L}}_{\beta_k}$  are as in  \eqref{eq:L_tilde_def} and \eqref{eq:acg_input_aux_defs}, respectively, and
\begin{equation}
w_{k}:=\frac{1}{\lam}\left[r_{k}+\widetilde{\cal M}_k(z_{k}-\hat{z}_{k})\right];  
\label{eq:refine_aux_defs}
\end{equation}
if $(\hat{w},\hat{q}):=(\hat{w}_{k},\hat{q}_k)$ satisfies \eqref{ineq:rho_eta_approx_soln} 
then stop and output $(\hat{z},\hat{p},\hat{w},\hat{q})=(\hat{z}_{k},\hat{p}_{k},\hat{w}_{k},\hat{q}_k)$;
\item[{\bf 3.}]
if $k > \hat k+1 $ and 
\begin{equation}\label{eq:stopCOndition-deltak}
\Delta_k:=\frac{1}{k-\hat k-1}\left[\mathcal{L}_{\beta_k}(z_{\hat k+1},p_{\hat k})-\mathcal{L}_{\beta_k}(z_k,p_k) - \frac{\|p_k\|^2}{2\beta_k} \right] \le
\frac{\lambda\hat \rho^2}{2C_\sigma},
    \end{equation}
then set $\beta_{k+1}=2 \beta_k$ and $\hat k=k$;
otherwise, set $\beta_{k+1}=\beta_k$;
\item[{\bf 4.}] 
update $k\gets k+1$, and go to step~1. 
\end{itemize}
\noindent \rule[0.5ex]{1\columnwidth}{1pt}

Some remarks about NL-IAPIAL  are in order.
 First, it performs two kinds of iterations, namely, the ones indexed by $k$ and the ones performed by the ACG algorithm every time it is called in step~1. We refer to the former as ``outer'' iterations and the latter as ``inner'' (or ACG) iterations.  Second, its input  $z_0$ can be any element in the domain of $h$ and does not necessarily need to be a point satisfying the constraint $g(z_0)\preceq_{\cK} 0$. Third, the ACG described in Appendix~\ref{sec:acg} is invoked in step~1 to compute  a triple $(z_k,v_k,\varepsilon_k)$ satisfying \eqref{eq:prox_incl}, which can be seen as an approximate stationary solution for the prox-subproblem \eqref{eq:approx_primal_update}. Fourth,   it will be shown in Lemma~\ref{prop:refinement} that the refined quadruple $(\hat{z},\hat{p},\hat{w},\hat{q}):=(\hat z_k,\hat p_k, \hat w_k,\hat q_k)$ computed in step~2 satisfies all the relations in \eqref{eq:rho_eta_approx_soln} at any outer iteration. As a consequence,  the  NL-IAPIAL output  $(\hat{z},\hat{p},\hat{w},\hat{q})$
is a $(\hat \rho, \hat \eta)$-approximate stationary quadruple of \eqref{eq:main_prb} in the sense of Definition~\ref{def:stationarypoint}.
Finally, it follows from Lemma~\ref{lem:dist_smoothness}(b),
 and the first identities in
\eqref{eq:dual_update2} and \eqref{eq:refined_points},
that the gradients of the function
$\widetilde{{\cal L}}_{\beta_k} (\cdot, p_{k-1})$ which
appear
in \eqref{eq:refined_points} can be computed as $\nabla_z\widetilde{{\cal L}}_{\beta_k} (z_k, p_{k-1})=\nabla f(z_k)+\nabla g(z_k) p_k$ and $\nabla_z\widetilde{{\cal L}}_{\beta_k} (\hat{z}_{k}, p_{k-1})=\nabla f(\hat z_k)+\nabla g(\hat z_k) \hat p_k$.

In the remaining part of
this subsection, we give
some intuition about
step 3 of NL-IAPIAL.
Define
the $l$-th cycle ${\cal C}_l$
as the $l$-th set of consecutive indices
$k$ for which $\beta_k$ remains constant, i.e.,
\begin{equation}\label{def:ctilde-l}
{\cal C}_l := \{ k : \beta_k=\tilde \beta_l:= 2^{l-1} \beta_1 \}.
\end{equation}
For every $l \ge 1$, we let
$k_l$ denote the largest index in ${\cal C}_l$.
Hence,
\[
{\cal C}_l = \{ k_{l-1} + 1, \ldots, k_l\} \quad \forall l \ge 1\]
where $k_0:=0$. Clearly, the different values of $\hat k$ that
arise in step 3 are exactly the indices in the index set $\{k_l : l \ge 0\}$.
Moreover, in view of the test performed
in step 3, we have that
$k_l-k_{l-1} \ge 2$ for every $l \ge 1$, or equivalently,
every cycle contains at least two indices. While generating the indices
in the $l$-th cycle,
if an index $k \ge k_{l-1}+2$
satisfying
\eqref{eq:stopCOndition-deltak} is found,
$k$ becomes the last index $k_l$ in the
$l$-th cycle and the $(l+1)$-th cycle
is started at iteration $k_l+1$ with
the penalty parameter set to
$\tilde \beta_{l+1}=2 \tilde \beta_l$, where $\tilde \beta_l$ is as in \eqref{def:ctilde-l}.

Finally,
the role played by criterion
\eqref{eq:stopCOndition-deltak}
is as follows. It is shown in Lemma~\ref{lem:what-Deltak} that for every $k \in {\cal C}_\ell$, there exists $j \in {\cal C}_\ell$, $j \le k$ such that
\begin{equation}\label{eq:aux wj-qj}
\|\hat w_j\|^2= \frac{C_\sigma \Delta_k}{\lam}+ {\cal O} \left(  \frac{1}{\tilde \beta_{l}} \right), \quad \|\hat q_j\|=\mathcal{O}\left( \frac{1}{\tilde \beta_{l}} \right).
\end{equation}
 Hence, if criterion \eqref{eq:stopCOndition-deltak} holds,
 then 
 \eqref{eq:aux wj-qj} implies that   $\|\hat w_j\|^2=\hat \rho^2/2+{\mathcal O}(1/\tilde \beta_l)$ and $\|\hat q_j\|={\mathcal O}(1/\tilde \beta_l)$. 
 On the other hand,
 since $\tilde \beta_l$ is doubled from one cycle to another, 
 these residual estimates imply
 that the stopping criterion in
 step~2  will eventually be
 satisfied.

 \subsection{Complexity results for  NL-IAPIAL }
This subsection contains the main complexity results for NL-IAPIAL.

We start by  considering a proposition, whose proof is presented in Section~\ref{subsec:boundMultplier}, that   shows that the sequence of Lagrange multipliers $\{p_k\}$ is bounded. Before presenting the result, we first introduce  the following quantities:
\begin{equation}\label{def:DhBf1}
\bar{d}:=\mbox{\rm dist}(\bar z, {\partial {\mathcal H}}),\qquad D_{h}:=\sup_{z',z\in{\cal H}}\|z'-z\|, \qquad \theta_h:=\frac{D_h}{\min\{1,\bar{d}\}}\qquad B_f^{(1)}:=\sup_{z\in{\cal H}} \|\nabla f (z)\|,
\end{equation}
\begin{equation}\label{def:kappa00}
\kappa_0:= 2\left[K_h+B_f^{(1)}\right] +\left[\frac{\sigma^2}{(1-\sigma)^2} + 4\left(\frac{1+\sigma}{1-\sigma}\right)\right]m_fD_h,
\end{equation}
where $\sigma \in (0,1/\sqrt{2}]$ is an input of NL-IAPIAL, $K_h$ and $m_f$ are as in (A1) and (A2), respectively,  and $\partial {\cal H}$ denotes the boundary of ${\cal H}$.
Observe that $\bar d >0$ in view of the fact that,
by (A4), $\bar z \in \intr {\cal H}$.
Moreover, using the fact that
${\cal H}$ is compact and $\nabla f$ is continuous
on ${\cal H}$ due to  (A1) and (A2), respectively,
it follows that
$D_h$ and $B_f^{(1)}$ are finite.
These two observations then imply that $\theta_h$ and $\kappa_0$ are also finite.

\begin{proposition}\label{th:pkbounded} The sequence  $\{p_k\}$ generated by  NL-IAPIAL satisfies 
\begin{equation} \label{ineq:pkbounded}
\|p_k\|\leq \kappa_p:=\max\left\{\|p_0\|,\frac{\theta_h\kappa_0}{\tau_g}\right\},\qquad \forall k\geq 0,
\end{equation}
where $\theta_h$, $\kappa_0$, and $\tau_g$, are as in \eqref{def:DhBf1}, \eqref{def:kappa00}, and (A4), respectively.
\end{proposition}

The following quantities will be used in the subsequent results:
\begin{equation}\label{phi*-Deltaphi*}
 \Delta \phi^*:=\phi^*-\phi_*, \quad \phi_*:=\inf_{z\in \Re^n} \phi(z),
\end{equation}
\begin{equation}\label{def:newkappa1}
\kappa_1 := \left(\frac{3L_{f}+L_g\kappa_p}{2m_f}\right)^{1/2},
\quad  \kappa_2:=6\kappa_p\sqrt{M_gC_\sigma}, \quad
\kappa_3:=\left[\left(\tau_g+4\sqrt{M_g} \right)\frac{\kappa_p \sqrt{M_g}}{2m_f}
\right]^{1/2},
\end{equation}
\begin{equation} \label{def:c_bar_phi_star}
\bar{\beta}=\bar{\beta}(\hat \rho, \hat \eta)  := \frac{m_f}{M_g}\left(\frac{\kappa_2^2}{\hat\rho^2} + \frac{\kappa_3^2}{\hat\eta}\right),
\end{equation}
where  the quantities     $(m_f,L_f)$, $L_g$, $\phi^*$, $M_g$,  $C_\sigma$,  $D_h$, and $\kappa_p$  are as in  (A2), (A3), \eqref{eq:main_prb}, \eqref{eq:Lipsc_tilde_def},  \eqref{def:lamb-C1}, \eqref{def:DhBf1}, and \eqref{ineq:pkbounded},  respectively.

The following result, whose proof is given in Subsection~\ref{sec:technical}, establishes
bounds on the number of ACG and outer iterations performed during an NL-IAPIAL cycle, and shows that NL-IAPIAL outputs a $(\hat\rho,\hat\eta)$-approximate stationary quadruple of \eqref{eq:main_prb}
within a logarithmic number of cycles.

\begin{proposition}\label{lem:StaticIPAAL}
The following statements about NL-IAPIAL hold: 
\begin{itemize}
	\item[(a)] every outer iteration within
	the $l$-th cycle performs at most
\begin{equation*}
\left\lceil 5\left(\kappa_1 + \sqrt{\frac{\tilde \beta_l M_g}{2m_f}}\right)\log_{1}^{+}\left(\frac{4\kappa_1^2}{\sigma} + \frac{2\tilde \beta_l M_g}{\sigma m_f}\right) \right\rceil
\end{equation*}
ACG iterations, where  $m_f$,  $M_g$,  $\tilde \beta_l$, and $\kappa_1$   are as in  (A2),  \eqref{eq:Lipsc_tilde_def},  \eqref{def:ctilde-l}, and \eqref{def:newkappa1}, respectively;
\item[(b)] every cycle  performs 
at most 
$$
\left\lceil\frac{4m_fC_\sigma\left(\Delta \phi^* + 2m_fD_h\right)}{\hat \rho^2}\right\rceil
$$
outer iterations,
where  $C_\sigma$,  $D_h$, and $\Delta \phi^*$ are as in \eqref{def:lamb-C1},   \eqref{def:DhBf1},  and  \eqref{phi*-Deltaphi*}, respectively; 
 \item[(c)] the last cycle $\bar l$ outputs a $(\hat \rho,\hat\eta)$-approximate stationary quadruple of \eqref{eq:main_prb} and satisfies
 \[
\bar{l}\le  \log_1^+\left(\frac{4 \bar\beta}{\beta_1}\right), \quad \tilde \beta_{\bar l} \le \max\{\beta_1,2\bar{\beta}\}
\]
where $\bar{\beta}$ is as in  \eqref{def:c_bar_phi_star}.
\end{itemize}
\end{proposition}

Notice that if $\beta_1 > 4\bar{\beta}$, then Proposition~\ref{lem:StaticIPAAL}(c) implies the number of ACG iterations
of NL-IAPIAL is bounded above by the product of the quantities in Proposition~\ref{lem:StaticIPAAL}(a)--(b).
The next result bounds the number of ACG iterations of NL-IAPIAL when $\beta_1 \leq 4\bar{\beta}$. 

\begin{theorem}\label{theor:StaticIPAAL}
Suppose $\beta_1 \leq 4\bar{\beta}$. Then  NL-IAPIAL outputs   a $(\hat \rho,\hat\eta)$-approximate stationary quadruple of \eqref{eq:main_prb} in
\begin{equation}\label{eq:main complexity-bound}
{O}\left(\left[1 + \frac{m_fC_\sigma\left(\Delta \phi^* + m_fD_h\right)}{\hat \rho^2}\right]\left[\kappa_1+\frac{\kappa_2}{\hat\rho} + \frac{\kappa_3}{\sqrt{\hat\eta}}\right] (\log_1^+)^2 \left[\frac{\bar{\beta}}{\beta_1} + \frac{\kappa_1^2}{\sigma} + \frac{\bar \beta M_g}{\sigma m_f}\right]\right)
\end{equation}
ACG  iterations,
where $m_f$, $C_\sigma$,  $D_h$, $\Delta\phi^*$, $(\kappa_1,\kappa_2,\kappa_3)$, and  $\bar{\beta}$ are as in  (A2),   \eqref{def:lamb-C1},     \eqref{def:DhBf1}, \eqref{phi*-Deltaphi*}, \eqref{def:newkappa1}, and  \eqref{def:c_bar_phi_star}, respectively.
\end{theorem}

\begin{proof}
First recall  that in the  $l$-th cycle of NL-IAPIAL,  we have $\beta_k=\tilde \beta_l=2^{l-1}\beta_1$, for every $l \ge 1$ (see  \eqref{def:ctilde-l}).  
 Also, Proposition~\ref{lem:StaticIPAAL}(c) implies that   NL-IAPIAL  outputs  a $(\hat \rho,\hat\eta)$-approximate stationary quadruple of \eqref{eq:main_prb} in at most $
\bar{l}:= \lfloor\log_1^+(4 \bar\beta/\beta_1)\rfloor
$
 cycles. Hence, since  $\beta_1\leq 4\bar{\beta}$, we have  
\begin{equation*}
		\tilde\beta_l=2^{l-1}\beta_1 \leq 4\bar{\beta}, \qquad \forall l=1,\ldots, {\bar l}.
\end{equation*}
It now follows from the above inequality and the definition of $\bar{\beta}$ in \eqref{def:c_bar_phi_star} that  the number of ACG iterations performed by NL-IAPIAL at every outer iteration (see Proposition~\ref{lem:StaticIPAAL}~(a)) is
$$
{O}\left(\left[\kappa_1+ 
\frac{\kappa_2}{\hat\rho} +\frac{\kappa_3}{\sqrt{\hat\eta}}\right]\log_{1}^{+}\left[\frac{\kappa_1^2}{\sigma} + \frac{\bar{\beta} M_g}{\sigma m_f}\right]\right).
$$
% \textcolor{red}{Revise the proof again to see if we need the first term in ${\mathcal T}$ since we consider $\bar \beta > \beta_1$}
The conclusion now follows from the above fact and Proposition~\ref{lem:StaticIPAAL}~(b)--(c). 
\end{proof}

It is worth mentioning that the iteration complexity bound in  Theorem~\ref{theor:StaticIPAAL}, in terms of the tolerance pair $(\hat\rho, \hat \eta)$, is
\[
{\cal O}_1\left(\left[\frac{1}{\sqrt{\hat \eta}\cdot\hat\rho^2} + \frac{1}{\hat\rho^3}\right] (\log_1^+)^2 \left(\frac{1}{\hat\eta} + \frac{1}{\hat\rho^2}\right)\right),
\]
as previously claimed in Section~\ref{sec:intro}.

\section{Proofs of Proposition~\ref{th:pkbounded} and Proposition~\ref{lem:StaticIPAAL}}\label{sec:proofs of propositions}

This section contains two subsections,
the first of which proves Proposition~\ref{lem:StaticIPAAL} and
the second one proves Proposition~\ref{th:pkbounded}.
It is worth noting that
the proof of Proposition~\ref{lem:StaticIPAAL} uses Proposition~\ref{th:pkbounded}, but the proof of Proposition~\ref{th:pkbounded} is 
self-contained.
Moreover, we opted to postpone the proof
of Proposition~\ref{th:pkbounded} due to its 
technicalities.

\subsection{Proof of Proposition~\ref{lem:StaticIPAAL}}  \label{sec:technical}

The first  result below presents some  relations about the iterates generated by NL-IAPIAL.

\begin{lem}\label{basicLemma:sk}  Let  $\{(z_k,p_k,\beta_k)\}$ be generated by NL-IAPIAL and define, for every $k\geq 1$,
\begin{equation} \label{eq:sk_def}
    s_k := \Pi_{-\cK}(p_{k-1} + {\beta_k}g(z_k)).
\end{equation}
Then, the following relations hold for every $k \ge 1$:
\begin{gather}
p_{k-1} + {\beta_k}g(z_k) = p_k + s_k, \quad \inner{p_k}{s_k} = 0, \quad (p_k,s_k) \in \cK^* \times (-\cK), \label{eq:sk_moreau} \\
{\cal L}_{\beta_k}(z_k,p_{k-1}) = \phi(z_k) + \frac1{2{\beta_k}} \left( \|p_{k}\|^2 - \|p_{k-1}\|^2 \right). \label{auxeq:declemma1}
\end{gather}
\end{lem}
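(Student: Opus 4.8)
The plan is to obtain the first line \eqref{eq:sk_moreau} as an instance of the Moreau decomposition applied to the single vector $y_k := p_{k-1} + c_k g(z_k)$, and then to plug that decomposition into the definition \eqref{eq:aug_lagr_def} of the augmented Lagrangian to read off \eqref{auxeq:declemma1}.

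First I would record that, since $\cK$ is a closed convex cone, $\cK^{**} = \cK$, so the polar cone of $\cK^{*}$ is $(\cK^{*})^{\circ} = -\cK^{**} = -\cK$; that is, $\cK^{*}$ and $-\cK$ form a pair of mutually polar closed convex cones. The Moreau decomposition then yields, for $y_k := p_{k-1} + c_k g(z_k)$, the orthogonal splitting $y_k = \Pi_{\cK^{*}}(y_k) + \Pi_{-\cK}(y_k)$ with the two summands lying in $\cK^{*}$ and $-\cK$ respectively and orthogonal to one another. By the multiplier update \eqref{eq:dual_update} and the definition \eqref{eq:sk_def}, these two projections are precisely $p_k$ and $s_k$; this gives $p_{k-1} + c_k g(z_k) = p_k + s_k$, $\inner{p_k}{s_k} = 0$, and $(p_k, s_k) \in \cK^{*} \times (-\cK)$, which is \eqref{eq:sk_moreau}.

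For \eqref{auxeq:declemma1}, I would start from \eqref{eq:aug_lagr_def} evaluated at $(z_k; p_{k-1})$, use $f(z_k) + h(z_k) = \phi(z_k)$, and note that by the definition of the distance together with $s_k = \Pi_{-\cK}(y_k)$ we have ${\rm dist}^{2}(y_k, -\cK) = \|y_k - s_k\|^{2}$. Finally, the first relation in \eqref{eq:sk_moreau} gives $y_k - s_k = p_k$, hence ${\rm dist}^{2}(y_k, -\cK) = \|p_k\|^{2}$; substituting this into \eqref{eq:aug_lagr_def} produces exactly \eqref{auxeq:declemma1}.

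I do not expect a genuine obstacle: the whole argument reduces to the Moreau decomposition of one vector across a polar pair of cones plus unwinding definitions. The only place that calls for a moment of care is verifying that the polar of $\cK^{*}$ is $-\cK$, which is where closedness of $\cK$ (hence $\cK^{**} = \cK$) is used — and this is part of the standing assumptions on problem \eqref{eq:main_prb}.
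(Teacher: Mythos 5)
Your proposal is correct and follows essentially the same route as the paper: the first line is exactly the Moreau decomposition of $p_{k-1}+c_k g(z_k)$ with respect to the mutually polar pair $\cK^{*}$ and $(\cK^{*})^{\circ}=-\cK$ (the paper invokes this via a textbook exercise), and the identity \eqref{auxeq:declemma1} is obtained just as in the paper by writing ${\rm dist}^{2}(p_{k-1}+c_k g(z_k),-\cK)=\|p_{k-1}+c_k g(z_k)-s_k\|^{2}=\|p_k\|^{2}$ inside \eqref{eq:aug_lagr_def}.
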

\begin{proof}
The relations in \eqref{eq:sk_moreau} follow from the definitions of
$p_k$ and $s_k$ in \eqref{eq:dual_update2} and \eqref{eq:sk_def}, respectively,  and  Theorem~III.3.2.5  of  \cite{Hiriart1}.
Now, in view of the definitions of ${\mathcal L}_\beta$ in \eqref{eq:aug_lagr_def} and
$s_k$ in \eqref{eq:sk_def}, respectively,
we have
\[
{\cal L}_{\beta_k}(z_k,p_{k-1})
= \phi(z_k) + \frac1{2{\beta_k}} \left[ \|p_{k-1} + {\beta_k}g(z_k)  - s_k\|^2 - \|p_{k-1}\|^2 \right]
\]
which, in view of  the first identity in \eqref{eq:sk_moreau},
immediately implies \eqref{auxeq:declemma1}.
\end{proof}

The next technical result characterizes the change in the augmented Lagrangian between consecutive iterations of the NL-IAPIAL method.

\begin{lemma}\label{lem:declemma5}
The sequence $\{(z_k,p_k)\}$  generated by NL-IAPIAL satisfies, for every $k\geq 1$, the relations   
\begin{align}
{\cal L}_{\beta_k}(z_k,p_k)&\leq {\cal L}_{\beta_k}(z_k,p_{k-1})+\frac{1}{\beta_k}\|p_k-p_{k-1}\|^2, \label{auxineq:declemma2}\\
{\cal L}_{\beta_k}(z_k,p_k)&\leq {\cal L}_{\beta_k}(z_{k-1},p_{k-1})- \left(\frac{1-\sigma^2}{2\lambda} \right)\|r_k\|^2+ \frac{1}{\beta_k}\|p_k-p_{k-1}\|^2,\label{auxineq:declemma3}
\end{align}
 where $(\sigma,\lam)$ is given by the input of NL-IAPIAL and $\{r_k\}$ is as in \eqref{eq:dual_update2}.
 \end{lemma}
\begin{proof}
Let $s_k$ be as in \eqref{eq:sk_def}. Using \eqref{auxeq:declemma1}, the definition of ${\mathcal L}_\beta$ in \eqref{eq:aug_lagr_def}, the fact that $s_k \in {-\cK}$
and $p_{k-1}+\beta_kg(z_k) = p_k+s_k$ in view of \eqref{eq:sk_moreau},
we have that
\begin{align*}
{\cal L}_{\beta_k}(z_k,p_k) - {\cal L}_{\beta_k}(z_{k},p_{k-1}) &=
{\cal L}_{\beta_k}(z_k,p_k) - \phi(z_k) - \frac1{2{\beta_k}} \left( \|p_{k}\|^2 - \|p_{k-1}\|^2 \right) \\
&= \frac{1}{2{\beta_k}}\left({\rm dist}^{2}(p_k+{\beta_k}g(z_k),-{\cal K})- \|p_{k}\|^2 \right)  - \frac1{2\beta_k} \left( \|p_{k}\|^2 - \|p_{k-1}\|^2 \right) \\
&\le \frac{1}{2\beta_k} \left( \|p_k+\beta_kg(z_k)-s_k\|^2 - \|p_{k}\|^2 \right) - \frac1{2\beta_k} \left( \|p_{k}\|^2 - \|p_{k-1}\|^2 \right)  \\
&= \frac{1}{2\beta_k} \left( \|2p_k-p_{k-1}\|^2 - 2\|p_{k}\|^2 + \|p_{k-1}\|^2 \right),
\end{align*}
which immediately implies  \eqref{auxineq:declemma2}.
Now, in view of the definition of the $\varepsilon$-subdifferential given in \eqref{def:epsSubdiff} and  the fact that
 $(z_k,v_k,\varepsilon_k)$ satisfies both the inclusion and the inequality in \eqref{eq:prox_incl}, we conclude that
	\begin{align}
		& \lambda{\cal L}_{\beta_k}(z_k,p_{k-1}) - \lambda{\cal L}_{\beta_k}(z_{k-1},p_{k-1})
		\leq -\frac{1}{2} \|z_k-z_{k-1}\|^2 +
		\inner{v_{k}}{z_k-z_{k-1}} +\varepsilon_k \nonumber \\
		&= -\frac{1}{2} \|v_{k}+z_k - z_{k-1} \|^2 + \frac{1}2\|v_{k}\|^2 + \varepsilon_k  \leq -\left(\frac{1-\sigma_k^2}{2}\right) \|r_k\|^2
		\le -\left(\frac{1-\sigma^2}{2}\right) \|r_k\|^2, \label{eq:aux_DeltaLagr_bd1}
		\end{align}
		where the last inequality follows from
		the  fact that $\sigma_k\leq \sigma$ in view of  \eqref{eq:acg_input_aux_defs}.
		Inequality \eqref{auxineq:declemma3} now follows by combining \eqref{auxineq:declemma2} with \eqref{eq:aux_DeltaLagr_bd1}.
 \end{proof}
 
Recall that the
$l$-th cycle ${\cal C}_l$ of NL-IAPIAL is  defined
in \eqref{def:ctilde-l}. 
The next results present some properties of the iterates generated during an NL-IAPIAL cycle. 
The first one shows that  the sequence $\{\|r_k\|\}_{k\in {\cal C}_l}$ is bounded and can be controlled  by   $\{\Delta_k\}_{k\in {\cal C}_l}$ plus a term which is of $\mathcal{O}(1/{\tilde \beta_l})$.

\begin{lemma}\label{lem:minrk-Deltak}
Consider the sequences
$\{(z_k,v_k,\varepsilon_k)\}$ and $\{\Delta_k\}$ generated by NL-IAPIAL and the sequence
$\{r_k\}$   as  in \eqref{eq:dual_update2}. Then, the following statements hold:

\begin{itemize}
    \item[(a)] for every $k\geq 1$, we have
\begin{equation}\label{ineq:rkbound}
 \|r_k\| \leq \frac{D_{h}}{1-\sigma};
   \end{equation}
   \item [(b)]  $k\in {\cal C}_l$ and $k\geq k_{l-1}+2$, there exists an index $j\in \{k_{l-1}+2,\ldots, k\}$ such that 
  
\begin{equation}\label{main-ineq-dreasingLag}
\|r_j\|^2\leq \frac{2\lambda}{1-\sigma^2}\left(\Delta_k + \frac{9\kappa_p^2}{\tilde \beta_l}\right),
\end{equation}
where  $\sigma$, $\kappa_p$,   and $D_h$ are  as in   \eqref{eq:acg_input_aux_defs},  \eqref{ineq:pkbounded},   and \eqref{def:DhBf1}, respectively.
\end{itemize}
\end{lemma}
\begin{proof}
(a)The definition of $\sigma_k$ in \eqref{eq:acg_input_aux_defs}, the inequality in \eqref{eq:prox_incl}, the triangle inequality for norms, and the fact that $z_k,z_{k-1}\in{\cal H}$ imply that 
\[
\|r_k \| = \|v_k + z_{k-1} - z_k\| \leq \|v_k\| + D_h \leq \sigma_k\|r_k\| + D_h \leq \sigma \|r_k\| + D_h,
\]
which, after a simple re-arrangement, proves \eqref{ineq:rkbound}.

\noindent (b) Now, to simplify notation, let $\bar k = k_{l-1}+1$.
Now, using  \eqref{ineq:pkbounded} and the fact that   $\|p_j-p_{j-1}\|^2\leq 2\|p_j\|^2+2\|p_{j-1}\|^2$, it follows that for any $k\geq \bar{k}+1$,
\begin{equation}\label{eq:auxboundpk2}
\frac{\|p_k\|^2}{2}+ \sum_{j={\bar k}}^k\|p_j-p_{j-1}\|^2
\le \frac{\kappa_p^2}{2}+ 4(k-\bar k+1)\kappa_p^2= \frac{(1+ 8(k-\bar k+1))\kappa_p^2}{2}\leq 9(k-\bar k)\kappa_p^2.
\end{equation}
Hence, 
\eqref{auxineq:declemma2} with $k=\bar{k}$, \eqref{auxineq:declemma3},  \eqref{eq:auxboundpk2}, and the fact that $\beta_{k}=\tilde \beta_l$ for every $k\in {\cal C}_l$,  imply that for any $k\in {\cal C}_l$ such that $k\geq \bar k+1$,
\begin{align*}
	\frac{(1-\sigma^2)}{2\lambda}\sum_{j=\bar k+1}^k\|r_j\|^2 
	& \overset{\eqref{auxineq:declemma3}}{\leq} \sum_{j=\bar k+1}^k\left [
	{\cal L}_{\beta_j}(z_{j-1},p_{j-1})-{\cal L}_{\beta_j}(z_j,p_j)+ \frac{1}{\beta_j}\|p_j-p_{j-1}\|^2 \right]
	\\
    & \overset{j \in {\cal C}_l}{=} \sum_{j=\bar k+1}^k\left [
	{\cal L}_{\tilde \beta_l}(z_{j-1},p_{j-1})-{\cal L}_{\tilde \beta_l}(z_j,p_j)+ \frac{1}{\tilde \beta_l}\|p_j-p_{j-1}\|^2 \right] \\
	& \leq {\cal L}_{\tilde \beta_l}(z_{\bar k},p_{\bar k})-{\cal L}_{\tilde \beta_l}(z_k,p_k)+ \frac{1}{\tilde \beta_l}\sum_{j={\bar k+1}}^k\|p_j-p_{j-1}\|^2 \\
	& \overset{\eqref{auxineq:declemma2}}{\leq} {\cal L}_{\tilde \beta_l}(z_{\bar k},p_{\bar k-1})-{\cal L}_{\tilde \beta_l}(z_{k},p_{k})+ \frac{1}{\tilde \beta_l}\sum_{j={\bar k}}^k\|p_j-p_{j-1}\|^2 \\
	& \overset{\eqref{eq:auxboundpk2}}{\leq} {\cal L}_{\tilde \beta_l}(z_{\bar k},p_{\bar k-1})-{\cal L}_{\tilde \beta_l}(z_{k},p_{k}) - \frac{\|p_k\|^2}{2 \tilde \beta_l} + \frac{9(k-\bar k)\kappa_p^2}{\tilde \beta_l} \\
	& =  (k-\bar k) \left[ \Delta_k + \frac{9\kappa_p^2}{\tilde \beta_l} \right],
	\end{align*}
	where the last equality follows from the definition of $\Delta_k$  in \eqref{eq:stopCOndition-deltak} and the fact that $\hat{k}=\bar{k}-1$.
The proof of \eqref{main-ineq-dreasingLag}  now follows by dividing the above inequality by $(k-\bar k)(1-\sigma^2)/(2\lam)$ and by taking $j$ such that $\|r_j\|=\min_{\bar k+1\leq j\leq k}\|r_j\|$.
\end{proof}

The next result, whose proof can be found in Appendix~\ref{app:refine}, contains  some useful relations about the sequence $\{(\hat z_k,\hat p_k, \hat w_k,\hat q_k)\}$ generated by NL-IAPIAL.
\begin{lemma}
\label{prop:refinement}
Consider the sequences  $\{(\hat z_k,\hat p_k, \hat w_k,\hat q_k)\}$, $\{p_k\}$, and $\{r_k\}$   generated by NL-IAPIAL.
Then, for every $k\geq 1$, we have:
\begin{equation}\label{eq:Relations(a)-PropRef}
\hat{w}_k\in\nabla f(\hat{z}_k)+\pt h(\hat{z}_k)+\nabla g(\hat{z}_k)\hat{p}_k, \quad \inner{g(\hat z_k) + \hat q_k}{\hat p_k} = 0, \quad  g(\hat z_k) + \hat q_k \preceq_{\cK} 0, \quad\hat{p}_k\succeq_{\cK^{*}} 0, 
\end{equation}
\begin{equation}
\begin{gathered}
\|\hat{w}_{k}\|\leq \frac{1}{\lam} \left(1+2\sigma\right)\|r_{k}\|, \quad \|\hat{q}_k\|\leq\frac{B_g^{(1)}\sigma}{\widetilde{\cal M}_k}\|r_{k}\| + \frac{1}{\beta_k}\|p_k - p_{k-1}\|,
\end{gathered}\label{eq:strong_refine}
\end{equation}
where  $B_g^{(1)}$  is as in \eqref{eq:bd_Psi_val} and $({\widetilde{\cal M}_k},\sigma)$ is given  in \eqref{eq:acg_input_aux_defs}.

\end{lemma}

Some comments about Lemma~\ref{prop:refinement} are in order.
First, in view of the fact that \eqref{eq:Relations(a)-PropRef} implies that the quadruple
$(\hat{z},\hat{p},\hat{w},\hat q)=(\hat{z}_{k},\hat{p}_{k},\hat{w}_{k},\hat q_k)$
satisfies all the relations in \eqref{eq:rho_eta_approx_soln}, it follows
that such a quadruple becomes a
$(\hat \rho,\hat\eta)$-approximate stationary quadruple of \eqref{eq:main_prb}
whenever $\|\hat w_k\| \le \hat \rho$ and $\|\hat q_k\|\leq \hat\eta$.
The inequalities in \eqref{eq:strong_refine} provide useful bounds
for these residual pair  in terms of
$\|r_k\|$ and $ \|p_k-p_{k-1}\|/\beta_k$ which  are used to prove that  $\{(\hat w_k,\hat q_k)\}$  eventually approaches zero. Hence, the latter two inequalities will eventually be satisfied, which implies that  NL-IAPIAL  computes a $(\hat \rho,\hat\eta)$-approximate stationary quadruple of \eqref{eq:main_prb}  after a finite number of iterations.

The next result shows that during an $l$-th cycle of  NL-IAPIAL, the residual sequence $\{(\hat w_k,\hat q_k)\}$ can be controlled  by  $\tilde \beta_l$ and  $\{\Delta_k\}$ defined in \eqref{eq:stopCOndition-deltak}. 

\begin{lemma}\label{lem:what-Deltak} Consider the sequence  $\{(\hat w_k,\hat{q}_k)\}_{k\in {\cal C}_l}$  generated during the $l$-th cycle of  NL-IAPIAL.  Then, 
 for every $k\in {\cal C}_l$ and $k\geq k_{l-1}+2$, there exists an index $j\in \{k_{l-1}+2,\ldots, k\}$ such that 
 \begin{equation}\label{ineq:what5}
 \|\hat w_j\|^2 \le  2m_fC_\sigma\Delta_k + \frac{m_f\kappa_2^2}{ 2M_g \tilde \beta_l}, \qquad \|\hat{q}_j\| \le \frac{m_f\kappa_3^2}{M_g\tilde \beta_l},
\end{equation}
where $C_\sigma$, $\Delta_k$, and $(\kappa_2,\kappa_3)$  are as in \eqref{def:lamb-C1}, \eqref{eq:stopCOndition-deltak}, and     \eqref{def:newkappa1}, respectively. 
 \end{lemma}
\begin{proof}
First, recall that for any $k\in {\cal C}_l$, we have $\beta_k=\tilde \beta_l$ in view of \eqref{def:ctilde-l}. Hence, the proof of the first inequality in \eqref{ineq:what5} for some $j\in\{k_{l-1}+2,\ldots,k\}$ follows immediately from Lemma~\ref{lem:minrk-Deltak}(b), the first inequality in \eqref{eq:strong_refine}, and the definitions of $(C_\sigma,\lambda)$ and $\kappa_2$ in \eqref{def:lamb-C1} and \eqref{def:newkappa1}, respectively. 
Now,  from the second inequality in  \eqref{eq:strong_refine}, the definition of $\lambda$ in \eqref{def:lamb-C1}, the triangle inequality for norms,  Proposition~\ref{th:pkbounded}, \eqref{ineq:rkbound}, and the fact that   
$\widetilde {\mathcal M}_k \geq \lam \tilde \beta_l M_g$ (see  \eqref{eq:Lipsc_tilde_def} and \eqref{eq:acg_input_aux_defs}), we have 
\begin{align*} 
    \|\hat q_j\|
     &\leq \frac{B_g^{(1)} \sigma}{\widetilde M_k}\|r_j\| + \frac{1}{\tilde \beta_l} \left(\|p_j\| + \|p_{j-1}\| \right) \leq \frac{\sigma B_g^{(1)} D_h}{\lam (1-\sigma) M_g\tilde \beta_l} + \frac{2\kappa_p}{\tilde \beta_l}\\
     &=\left(\frac{\sigma B_g^{(1)} D_h}{1-\sigma} + \frac{ M_g\kappa_p}{m_f}\right)\frac{2m_f}{ M_g\tilde \beta_l}. 
\end{align*}
On the other hand, it follows from  the fact that $ B_g^{(1)}\leq \sqrt{M_g} $ (see  \eqref{eq:Lipsc_tilde_def}) and the definitions of $\theta_h$, $\kappa_0$, and $\kappa_p$ in \eqref{def:DhBf1}, \eqref{def:kappa00}, and \eqref{ineq:pkbounded}, respectively, that
\begin{align*}
\frac{\sigma B_g^{(1)} D_h}{1-\sigma}\leq \frac{\sigma  \min\{1,\bar{d}\} \theta_h\sqrt{M_g}}{1-\sigma} 
     \leq \frac{\sigma D_h \theta_h\sqrt{M_g}}{1-\sigma}\leq \frac{ \kappa_0 \theta_h\sqrt{M_g}}{4 m_f}\leq \frac{\tau_g \kappa_p\sqrt{M_g}}{4m_f}. 
\end{align*}
Hence, we conclude that 
$$
\|\hat q_j\|\leq \left(\tau_g\sqrt{M_g}+4M_g \right)\frac{\kappa_p}{2M_g\tilde \beta_l} \quad \forall j \in \{k_{l-1}+2,\ldots, k\},
$$
which, together with the previous conclusion about $\|\hat{w}_j\|$ and the definition of $\kappa_3$ in \eqref{def:newkappa1}, implies the existence of an index $j\in \{k_{l-1}+2,\ldots, k\}$ satisfying \eqref{ineq:what5}.
\end{proof}
 
The next result establishes the rate in which the sequence $\{\Delta_k\}$ defined in \eqref{eq:stopCOndition-deltak} converges to zero 

\begin{lemma}\label{lem:upperboundDeltak} Consider the sequence $\{(z_{k},p_{k})\}_{k\in {\cal C}_l}$ generated during the $l$-th cycle of  NL-IAPIAL and let $\Delta_k$ be as in \eqref{eq:stopCOndition-deltak}. Then, for every $k\in {\cal C}_l$ and $k\geq k_{l-1}+2$, we have
		\begin{equation*}
\Delta_k \le \frac{\Delta \phi^* + 2m_fD_h}{k-k_{l-1}-1},
		\end{equation*}
where  $D_h$, $\Delta \phi^*$, and  $m_f$  are as in  \eqref{def:DhBf1},   \eqref{phi*-Deltaphi*}, and (A2), respectively.
	\end{lemma}
	\begin{proof} 
	From  step~1 of NL-IAPIAL we have that $(\lambda,z_k,v_k,\varepsilon_k,\sigma_k)$ satisfies  \eqref{eq:prox_incl}. Moreover,  we also have $1-2\sigma_k^2\geq 0$ due to $\sigma_k \le \sigma \in (0,1/\sqrt{2}]$ (see  NL-IAPIAL input and \eqref{eq:acg_input_aux_defs}).
Hence,  it follows from Lemma~\ref{lem:auxNewNest2} with $\tilde{\phi} =\lambda {\cal L}_{\beta_k}(\cdot,p_{k-1})$,  $(\tilde \sigma,s)=(\sigma_k,1)$, and  $(x_0,x)=(z_{k-1},z_k)$  that   
		\begin{equation}\label{ineq:Lz1p0}
		\lambda{\cal L}_{\beta_k}(z_k,p_{k-1}) \leq   \lambda{\cal L}_{\beta_k}(z,p_{k-1}) +\|z-z_{k-1}\|^2, \qquad \forall  z\in {\mathcal H}.
  		\end{equation}
Since the definition of ${\cal L}_\beta$ in \eqref{eq:aug_lagr_def} implies that  ${\cal L}_{\beta_k}(z,p_{k-1})\leq\phi(z)$ for every  $z\in { \mathcal F}:=\{z\in {\mathcal H}:g(z)\preceq_\mathcal{K} 0\}$, it follows from \eqref{ineq:Lz1p0} and the definitions of  $\phi^*$ and $D_h$ in \eqref{eq:main_prb} and \eqref{def:DhBf1}, respectively,  that  
		\begin{equation}\label{eq:aux:2345}
{\cal L}_{\beta_k}(z_k,p_{k-1}) \leq \phi^*  + \frac{D_h^2}{\lambda}.
		\end{equation}
Now, in view of  the definitions of $\mathcal{L}_\beta$ and $\phi_*$ given in \eqref{eq:aug_lagr_def} and \eqref{def:c_bar_phi_star}, respectively, we  have
\[
{\cal L}_{\beta_k}(z_k,p_k)
+ \frac{\|p_k\|^{2}}{2\tilde \beta_l} =\phi(z_k)+\frac{1}{2\tilde \beta_l}{\rm dist}^{2}(p_k+\tilde \beta_lg(z_k),-{\cal K})\geq \phi_*.
\]
Since the $l$-th cycle ${\mathcal C}_l$ starts at iteration $k_{l-1}+1$ and  $\beta_k=\tilde \beta_l$ for any ${k\in \cal C}_l$, it follows from the definition of   $\Delta_k$ given in \eqref{eq:stopCOndition-deltak},  \eqref{eq:aux:2345} with $k=k_{l-1}+1$, and the above inequality that 
\begin{align*}
\Delta_k = \frac{1}{k-k_{l-1}-1}\left({\cal L}_{\tilde \beta_l}(z_{k_{l-1}+1},p_{k_{l-1}})- {\cal L}_{\tilde \beta_l}(z_k,p_k)-\frac{\|p_k\|^{2}}{2\tilde \beta_l}\right)\leq \frac{1}{k-k_{l-1}-1}\left(\phi^*+\frac{D_h^2}{\lambda}-\phi_*\right),
\end{align*}
which proves the lemma in view of the definitions of $\lambda$ and $\Delta \phi^*$ in \eqref{def:lamb-C1} and   \eqref{def:c_bar_phi_star},  respectively.
\end{proof}

Now we are ready to present the proof of Proposition~\ref{lem:StaticIPAAL}.

\begin{proof}[Proof of Proposition~\ref{lem:StaticIPAAL}]

(a) First note that NL-IAPIAL calls in its step~1 the ACG algorithm of Appendix~\ref{sec:acg} with inputs given by \eqref{eq:acg_inputs}. Note also that  within the $l$-th cycle, we have $\beta_k=\tilde \beta_l$ in view of  \eqref{def:ctilde-l}. Hence,  since $m_f\leq L_f$ (see (A2)),  we conclude that (a) follows from Lemma~\ref{lem:kthACGbound} and the fact that \eqref{ineq:pkbounded} and the definitions of  $\widetilde{\cal M}_k$, $\lambda$, and  $\kappa_1$ given in   
   \eqref{eq:acg_input_aux_defs}, \eqref{def:lamb-C1},  and \eqref{def:newkappa1}, respectively, imply that 
\begin{align*}
\widetilde{\cal M}_k&= \lam\left(L_{f}+L_{g}\|p_{k-1}\|+\beta_k M_g\right)+1 \nonumber \\
&\leq \lam\left(L_{f}+L_g\kappa_p+\tilde \beta_l M_g\right)+1\leq \kappa_1^2+\frac{\tilde \beta_l M_g}{2m_f}.
\label{auxestimatesLpsi-tc}
\end{align*}
(b) Fix a cycle $l$ and note that $\hat k$ in step~3 corresponds to  $\hat k = k_{l-1}$. It follows from  Lemma~\ref{lem:upperboundDeltak} that,  for every $ k\in {\mathcal C}_l$ and  $k\geq \hat{k}+2$, 
$$
\Delta_k \le \frac{\Delta \phi^* + 2m_fD_h}{k-\hat{k}-1}.
$$
 Hence,   we have that if some $ k\in {\mathcal C}_l$ is such that 
\begin{equation}
k> \hat{k}+1 + \frac{2C_\sigma\left(\Delta \phi^* +2m_fD_h\right)}{\lambda\hat \rho^2} \label{eq:k_prf_bd}
\end{equation}
then $\Delta_k$ satisfies inequality \eqref{eq:stopCOndition-deltak}, ending the $l$-th cycle. Hence,   (b) follows immediately from this conclusion,  the definition of $\lambda$ in \eqref{def:lamb-C1}, and the fact that the $l$-th cycle starts at $\hat{k}+1$.

(c) 
First, recall that  in the  $l$-th cycle of NL-IAPIAL,  we have $\beta_k=\tilde \beta_l=2^{l-1}\beta_1$, for every $l \ge 1$ (see \eqref{def:ctilde-l}). 
If NL-IAPIAL  performs just one cycle then $\bar l=1$ and then the result immediately follows from \eqref{eq:Relations(a)-PropRef}, the stopping criterion in step~2 and Definition~\ref{def:stationarypoint}.
Assume then that NL-IAPIAL  performs more than one cycle. We argue that NL-IAPIAL stops before or at the first cycle $\bar{l}$ where $\tilde \beta_{\bar l} \geq \bar{\beta}(\hat \rho, \hat \eta)$ and 
 $\bar{\beta}(\hat \rho, \hat \eta)$ is as in \eqref{def:c_bar_phi_star}. Suppose that the algorithm has not stopped before a cycle $\bar{l}$, and note that the definition of $\bar{\beta}(\hat \rho, \hat \eta)$  in \eqref{def:c_bar_phi_star} implies
\begin{equation}
\tilde \beta_{\bar l} \geq \frac{m_f}{M_g} \left( \frac{\kappa_2^2}{\hat\rho^2} + \frac{\kappa_3^2}{\hat\eta} \right), \label{eq:lower_c_bds}
\end{equation}
where $\kappa_2$ and $\kappa_3$ are as in \eqref{def:newkappa1}. 
Now, if at the $\bar{l}$-th cycle, NL-IAPIAL performs at least $\bar{k}\geq k_{\bar{l}-1}+2$ outer iterations, where $\bar{k}$ is  the smallest index such that
\begin{equation}
\frac{2m_fC_\sigma\left(\Delta \phi^* + 2m_fD_h\right)}{\bar{k}-k_{\bar{l}-1}-1}\leq \frac{\hat\rho^2}{2}, \label{eq:bar_k_bd}
\end{equation}
then, in view of  \eqref{ineq:what5}, Lemma~\ref{lem:upperboundDeltak}, \eqref{eq:lower_c_bds}, and \eqref{eq:bar_k_bd},   there  exists an index $j\in \{k_{\bar{l}-1}+2,\ldots, \bar k\}$ such that
\[
\|\hat w_j\|^2 \leq 2 m_f C_\sigma \Delta_{\bar k} + \frac{m_f\kappa_2^2}{2M_g \tilde{\beta}_{\bar l}} 
\leq \frac{2m_fC_\sigma\left(\Delta \phi^* + 2m_fD_h\right)}{\bar{k}-k_{\bar{l}-1}-1} + 
\frac{\kappa_2^2}{2}\left( \frac{\kappa_2^2}{\hat\rho^2} + \frac{\kappa_3^2}{\hat\eta} \right)^{-1}
\le \frac{\hat{\rho}^2}{2} + \frac{\hat{\rho}^2}{2} = \hat{\rho}^2,
\]
and also
\[
\|\hat q_j\|  \le \frac{m_f \kappa_3^2}{M_g \tilde{\beta}_{\bar l}}
\leq \kappa_3^2\left( \frac{\kappa_2^2}{\hat\rho^2} + \frac{\kappa_3^2}{\hat\eta} \right)^{-1}
\leq \hat\eta.
\]
More specifically, since we assumed that at least $\bar{k}$ iterations are performed,
we have $j=\bar{k}$. Hence, NL-IAPIAL must stop before or on iteration $\bar{k}$ within the $\bar{l}$ cycle, in view of the stopping criterion in  step~2. In view of step~3 of NL-IAPIAL, we then have that
$$
\beta_k=\tilde \beta_l=2^{l-1}\beta_1 \leq 2\bar{\beta}, \quad \forall l \leq  \bar l.
$$
The conclusion now follows from the above bound,
step~2 of NL-IAPIAL, \eqref{eq:Relations(a)-PropRef}, and Definition~\ref{def:stationarypoint}.
\end{proof}

\subsection{Proof of Proposition~\ref{th:pkbounded}}\label{subsec:boundMultplier} 

The first lemma
describes some basic facts  about the sequence   $\{(z_{k},p_k,w_{k},r_k,\varepsilon_{k})\}$  generated by NL-IAPIAL.

\begin{lemma}
\label{prop:refinement1}
Consider the sequence $\{(z_{k},p_k,w_{k},r_k,\varepsilon_{k})\}$  generated by NL-IAPIAL.
Then,  the following statements hold for every $k\geq 1$:
\begin{itemize}
    \item[(a)] the quintuple  $(z_{k},p_k,w_{k},r_k,\varepsilon_{k})$ satisfies
\begin{equation}
\begin{gathered}
w_{k}\in\nabla f(z_{k})+\pt_{(\lambda^{-1}\varepsilon_k)}h(z_{k})+\nabla g(z_{k})p_{k}, \\[2mm]
\|w_{k}\| \leq \frac{1}{\lam}\left(1+\sigma\right)\|r_{k}\|, \quad  \varepsilon_{k} \leq \frac{\sigma^{2} }{2}\|r_{k}\|^{2};
\end{gathered}\label{eq:weak_refine}
\end{equation}
    \item[(b)] the residual pair $(w_k,\varepsilon_k)$ satisfies \begin{equation}\label{ineq:rk-deltak-wk}
\varepsilon_{k}\leq \frac{\sigma^2 D_h^2}{2(1-\sigma)^2}, \quad\|w_{k}\|\leq \left(\frac{1+\sigma}{1-\sigma}\right) \frac{D_{h}}{\lam},
   \end{equation}
where  $\sigma$ and $D_h$ are  as in   \eqref{eq:acg_input_aux_defs} and \eqref{def:DhBf1}, respectively.
\end{itemize}
\end{lemma}
\begin{proof}
(a) The proof of this statement is presented in  Appendix~\ref{app:refine}. 

(b) The first inequality in \eqref{ineq:rk-deltak-wk} follows by combining  \eqref{ineq:rkbound}, the inequality in \eqref{eq:prox_incl}, and the definition of $\sigma_k$  in \eqref{eq:acg_input_aux_defs}. The last inequality in \eqref{ineq:rk-deltak-wk}  follows from \eqref{ineq:rkbound} and  the first inequality in \eqref{eq:weak_refine}.
\end{proof}

The following technical result, whose proof can be found in Lemma 3.10 of \cite{RenWilmelo2020iteration}, plays an important role in the proof of Lemma~\ref{lem:pre-pkbound} below.

\begin{lemma}\label{lem:bound_xiN}
	Let $h$ be a function as in (A1).
	Then, for every $ u,z\in {\mathcal H}$, $\delta \ge 0$,  and $\xi \in \partial_{\delta} h(z)$, we have
	    \[
	\|\xi\|{\rm dist}(u,\partial {\mathcal H}) \le \left[{\rm dist}(u,\partial {\mathcal H})+\|z-u\|\right]K_h + \inner{\xi}{z-u}+\delta,
	    \]
where $\partial {\cal H}$ denotes the boundary of ${\cal H}$.
\end{lemma}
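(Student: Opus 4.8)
The plan is to exploit the definition of the $\delta$-subdifferential by evaluating its defining inequality at a single, carefully chosen test point $z'$. Since $\xi\in\partial_{\delta}h(z)$ means $h(z')\ge h(z)+\inner{\xi}{z'-z}-\delta$ for every $z'\in\rn$, the whole game is to pick $z'$ so that $\inner{\xi}{z'-u}$ is as large as possible — producing the term $\|\xi\|\,{\rm dist}(u,\partial\cH)$ on the left — while keeping $z'$ inside $\cH$, where the Lipschitz bound on $h$ from (B1) is available. Assuming $\xi\neq0$ (the case $\xi=0$ being trivial, as then the left-hand side vanishes and the right-hand side equals $[{\rm dist}(u,\partial\cH)+\|z-u\|]K_h+\delta\ge0$), I would set $d:={\rm dist}(u,\partial\cH)$ and take
\[
z' := u + d\,\frac{\xi}{\|\xi\|}.
\]

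First I would argue that $z'\in\cH$. Because $h\in\cConv\rn$, the set $\cH=\dom h$ is convex, and since it is compact it is closed; for such a set the closed ball $\bar B(u;d)$ of radius $d={\rm dist}(u,\partial\cH)$ centered at $u\in\cH$ is entirely contained in $\cH$. As $\|z'-u\|=d$, this gives $z'\in\cH$. Feeding $z'$ into the $\delta$-subgradient inequality and using $\inner{\xi}{z'-u}=d\|\xi\|$, I obtain
\[
h(z') \ge h(z) + d\|\xi\| - \inner{\xi}{z-u} - \delta,
\]
which rearranges to $d\|\xi\|\le [h(z')-h(z)]+\inner{\xi}{z-u}+\delta$.

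It then remains to control $h(z')-h(z)$ by the Lipschitz constant $K_h$: bounding $h(z')-h(z)\le K_h\|z'-z\|$ and using the triangle inequality $\|z'-z\|\le\|z'-u\|+\|u-z\|=d+\|z-u\|$ yields exactly
\[
\|\xi\|\,{\rm dist}(u,\partial\cH) \le [{\rm dist}(u,\partial\cH)+\|z-u\|]K_h + \inner{\xi}{z-u} + \delta,
\]
as claimed. The same computation covers the boundary case $u\in\partial\cH$ (where $d=0$ and $z'=u$) without any change. The only nonroutine step is the geometric claim that $\bar B(u;d)\subseteq\cH$, i.e., that the distance from a point to the boundary of a convex set equals the radius of the largest inscribed ball around it; I expect this to be the main point to justify carefully, and it is precisely where the convexity of $\cH$ (inherited from $h\in\cConv\rn$) enters the argument.
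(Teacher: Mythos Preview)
Your proof is correct and is the natural argument for this lemma; the paper itself does not prove the statement but cites \cite[Lemma~4.7]{melo2020iteration}, where essentially this same test-point construction $z'=u+d\,\xi/\|\xi\|$ is used. One small remark: the inclusion $\bar B(u;d)\subseteq\cH$ actually follows from closedness of $\cH$ alone (if $w\notin\cH$ then the segment $[u,w]$ meets $\partial\cH$ strictly before reaching $w$, contradicting $d={\rm dist}(u,\partial\cH)$), so convexity is not essential at that step, though it certainly holds here.
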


The idea behind the proof of Lemma 3.10 of \cite{RenWilmelo2020iteration}
is based on the following two observations:
i) any $h$ as in (A1) satisfies
the
condition
that $\partial_\varepsilon h(z) \subset {\mathcal N}^\varepsilon_{\mathcal H}(z)+ \bar B(0,K_h)$
(see Lemma A.2(ii) of \cite{RenWilmelo2020iteration});
and, ii) any closed convex function
satisfying the latter condition
satisfies the conclusion of Lemma~\ref{lem:bound_xiN}.
It is worth mentioning that the proof of the second observation
uses a technical inequality that appears in
the proof of Lemma~3 of 
\cite{PPmetNonconvex2019}.

The following technical result, whose proof is based
on the two previous lemmas, is used in Lemma~\ref{lem:pre-pkbound} to derive a recursive
formula below relating $p_{k-1}$ and $p_k$.

\begin{lemma}\label{Lem:xik}
Consider the sequence $\{(z_k,p_k)\}$ generated by NL-IAPIAL and let $\bar{z}$, $\kappa_0$, and $\bar{d}$ be  as in (A4),  \eqref{def:kappa00}, and \eqref{def:DhBf1}, respectively. Then, the following inequality holds 
\begin{equation}\label{ineq:aux9001}
\inner{\nabla g(z_k)p_k}
{z_k-\bar z} \leq  D_h\kappa_0- \bar{d}\| \nabla g(z_k) p_k\|,\qquad \forall k\geq 1.
\end{equation}
\end{lemma}	
\begin{proof}
Let $\{(z_k,p_k,w_k)\}$ be generated by NL-IAPIAL and note that, in view of the inclusion in \eqref{eq:weak_refine}, we have  
$w_{k}-\nabla f(z_{k})-\nabla g(z_{k})p_{k} \in \partial_{(\lambda^{-1}\varepsilon_k)} h(z_k)$ for every $k\geq 1$. 
Hence, it follows from
the definition of $\bar d$, and
Lemma~\ref{lem:bound_xiN} with $\xi=w_{k}-\nabla f(z_{k})-\nabla g(z_{k})p_{k}$, $z=z_k$,  $u=\bar z$ and $\delta=\lambda^{-1}\varepsilon_k$, that 
 \begin{align*}
     \bar{d}\|w_{k}-\nabla f(z_{k})-\nabla g(z_{k})p_{k}\| &\leq \left(\bar d +\|z_k-\bar{z}\|\right)K_h+\inner{w_{k}-\nabla f(z_{k})-\nabla g(z_{k})p_{k}}{z_k-\bar z}+ \frac{\varepsilon_k}{\lambda}\\[2mm]
&\leq  (\bar d+D_h) K_h  - \inner{\nabla g(z_{k})p_{k}}{z_k-\bar z}
+  \|w_k - \nabla f(z_k)\|D_h+\frac{\varepsilon_k}{\lambda},
 \end{align*}
 where the last inequality is due to Cauchy-Schwarz inequality and the fact that $\|z_k-\bar{z}\|\leq D_h$ (in view of $\bar z, \, z_k \in {\cal H}$ and the definition of $D_h$ in \eqref{def:DhBf1}). Now,  using  the reverse triangle inequality for norms and rearranging the resulting inequality, we have 
 \begin{align*}
     \inner{\nabla g(z_{k})p_{k}}{z_k-\bar z} +
  \bar d \|\nabla g(z_{k})p_{k}\| 
&\leq  (\bar{d} + D_h) K_h 
+  \|w_k - \nabla f(z_k)\|\, (\bar d +D_h)+ \frac{\varepsilon_k}{\lambda}  \\[2mm]
&\leq 2D_h K_h +  2\left(\frac{(1+\sigma)D_h}{\lambda(1-\sigma)}  +   B_f^{(1)}\right)D_h +        \frac{\sigma^2D_h^2}{2\lambda(1-\sigma)^2}
 \end{align*}
where the last inequality is due to the definition of $B_f^{(1)}$ in  \eqref{def:DhBf1}, the inequalities in \eqref{ineq:rk-deltak-wk}, and  the fact  $\bar{d}\leq D_h$.
Hence,   \eqref{ineq:aux9001} follows in view of the definition of $\kappa_0$ in \eqref{def:kappa00}.
\end{proof}

We are now ready to show that the sequence  $\{p_k\}$ is bounded.

\begin{lemma}\label{lem:pre-pkbound} Consider the sequence  $\{(p_k,\beta_k)\}$ generated by  NL-IAPIAL and let $\kappa_0$, $\tau_g$, and $\bar{d}$ be as in \eqref{def:kappa00}, (A4) and  \eqref{def:DhBf1}, respectively. Then,  for every $k\geq 1$, we have 
    \begin{equation}\label{ineq:pre-boundpk}
        \min\{1,\bar d\}\tau_g\|p_k\| +\frac{\|p_k\|^2}{\beta_{k}}\le D_h\kappa_0+\frac{1}{\beta_{k}} \inner{p_k} {p_{k-1}}.
    \end{equation}
\end{lemma}
	
\begin{proof}
First note that  the first two identities in \eqref{eq:sk_moreau} imply that
\begin{equation*}
\inner{p_k}{g(z_k)} = \frac1{\beta_k} \inner{p_k}{s_k+p_k-p_{k-1}}=  \frac{\|p_k\|^2}{\beta_k} - \frac1{\beta_k} \inner{p_k} {p_{k-1}}.
\end{equation*}
Using this identity, \eqref{ineq:aux9001}, the fact that $p_k\in \cK^*$, and
relation \eqref{ineq:gradineq-Kconvexity} with $(z,z',p)=(z_k,\bar{z},p_k)$, 
we conclude that
\begin{align*}
D_h\kappa_0- \bar{d}\| \nabla g(z_k) p_k\|&\overset{\eqref{ineq:aux9001}}{\geq} \inner{\nabla g(z_k)p_k}
{z_k-\bar z} = \inner{p_k} {g'(z_k)(z_k-\bar z)}\\
&\overset{\eqref{ineq:gradineq-Kconvexity}}{\geq} \inner{p_k} {g(z_k)} - \inner{p_k}{g(\bar z)}
= \frac{\|p_k\|^2}{\beta_k} - \frac1{\beta_k} \inner{p_k} {p_{k-1}} +|\inner{p_k}{g(\bar z)}|,
\end{align*}
or equivalently,  
$$
 \bar d \| \nabla g(z_k) p_k\| 
+|\inner{p_k}{g(\bar z)}|  +\frac{\|p_k\|^2}{\beta_k}\le D_h\kappa_0  + \frac1{\beta_k} \inner{p_k} {p_{k-1}}.
$$
 Inequality \eqref{ineq:pre-boundpk}  now
 follows from \eqref{eq:gen_slater} and the latter inequality.
\end{proof}	

Based on the recursive formula \eqref{ineq:pre-boundpk}, we are now ready to give the proof  of Proposition~\ref{th:pkbounded}.

\begin{proof}[Proof of Proposition~\ref{th:pkbounded}.]
The proof is done  by induction.
Inequality \eqref{ineq:pkbounded} trivially  holds for $k=0$.  Assume that \eqref{ineq:pkbounded} holds with $k=i-1$ for some $i\geq 1$. This
assumption together with \eqref{ineq:pre-boundpk},   the Cauchy-Schwarz inequality, and the definitions of $\theta_h$ and $\kappa_p$ in \eqref{def:DhBf1} and \eqref{ineq:pkbounded}, respectively,  imply that
		\begin{align*}
		\left(\min\{1,\bar d\}\tau_g +\frac{\|p_{i}\|}{\beta_{i}}\right)\|p_{i}\| &\leq D_h\kappa_0+\frac{\|p_{i}\|\cdot\|p_{i-1}\|}{ \beta_{i}}\leq D_h\kappa_0+\frac{\|p_{i}\|\kappa_p}{\beta_{i}} \\
		&=\min\{1,\bar d\}\tau_g\frac{\theta_h\kappa_0}{\tau_g}+\frac{\|p_{i}\|\kappa_p}{\beta_{i}} \leq\left(\min\{1,\bar d\}\tau_g+\frac{\|p_{i}\|}{\beta_{i}}\right)\kappa_p,
		\end{align*}
which implies that   $\|p_{i}\|\leq \kappa_p$.
Then,  \eqref{ineq:pkbounded} also holds with $ k=i$ and hence, by induction,  we conclude that \eqref{ineq:pkbounded} holds for the whole sequence $\{p_k\}$. 
\end{proof}

\section{Numerical Experiments} \label{sec:numerical}

This section presents numerical experiments that highlight the performance
of two variants of NL-IAPIAL, named IPL and IPL(A), against six other benchmark methods for solving
NCO problems with linear or nonlinear convex constraints. It contains
five subsections. The first four present the numerical results on different classes of constrained NCO problems, while the last one
contains a summary and some comments. For replication purposes, the 
MATLAB code for generating the results of this section is available online\footnote{See the examples in \texttt{./tests/papers/nl-IAPIAL} from the GitHub repository
\href{https://github.com/wwkong/nc_opt/}{https://github.com/wwkong/nc\_opt/}.}.

Before proceeding, we first precisely describe the implementations of NL-IAPIAL. The IPL and IPL(A) variants considered differ from the description in Section~\ref{sec:aug_lagr} in two important ways. First, they both modify the parameter $\tilde{\sigma}$
that is given to the ACG algorithm in its step~1. More specifically,
instead of choosing $\tilde{\sigma}=\sigma_k$ at the $k$-th
iteration, the implementation chooses $\tilde{\sigma}=\min\{\nu/(\widetilde{\cal M}_k)^{1/2},\sigma\}$
for $\nu\gg0$. 
Second, in view of the first modification, they both replace condition \eqref{eq:stopCOndition-deltak}
with the modified condition
\[
\Delta_{k}\leq\frac{\lam(1-\sigma^{2})\hat{\rho}^{2}}{4(1+2\nu)^{2}},
\]
where $\nu$ is as previously described. In addition to these modifications, IPL(A) replaces the ACG algorithm with an ACG variant that adapts the ACG stepsize for every ACG prox subproblem. In particular, it uses the line search subroutine outlined in Appendix~\ref{sec:acg}, and it applies a warm-start strategy\footnote{For the first prox subproblem, $\widetilde{M}$ is initialized to $\lam {\widetilde{\cal M}}_k / 2 + 1$. For $k\geq 1$, if $L_j$ is the last (estimated) curvature constant generated by the adaptive ACG for the $k^{\rm th}$ prox-subproblem, then $\widetilde M$ for the $(k+1)^{\rm th}$ subproblem is initialized to $\lam J_{k+1}/2 + 1$, where $J_{k+1}:=(L_j-1)/\lam$.} for choosing the parameter $\widetilde M$ given to ACG for each prox-subproblem. Regarding
$(\sigma,\nu)$ and the other hyperparameters, both variants choose 
\[
\beta_{1}=\max\left\{ 1,\frac{L_{f}}{\left[B_{g}^{(1)}\right]^{2}}\right\} ,\quad\lam=\frac{1}{2m_f},\quad\sigma=\sqrt{0.3},\quad\nu=\sqrt{\sigma\left(\lam L_{f}+1\right)},\quad p_{0}=0.
\]
While we do not show how the above changes affect the convergence
of IPL and IPL(A), we do note that their convergence can be analyzed using
the techniques of this paper and those in \cite{RenWilmelo2020iteration}.

We also describe the six benchmark algorithms of this section namely, two variants of the QP-AIPP method of \cite{WJRproxmet1} (nicknamed QP and QP(A)), the iALM of \cite{ImprovedShrinkingALM20}, two variants of the S-prox-ALM (nicknamed SPA1 and SPA2) of \cite{ErrorBoundJzhang-ZQLuo2020, ADMMJzhang-ZQLuo2020}, and the HiAPeM of \cite{HybridPenaltyAugLag19} (nicknamed HPM). 
QP is the method in \cite[Algorithm 4.1.1]{KongThesis2021} while QP(A) is a modification of QP that uses the same adaptive ACG variant and parameter warm-start strategy used by IPL(A). iALM was implemented by the authors to be exactly as stated in \cite[Algorithm 3]{ImprovedShrinkingALM20} with the parameters $\sigma$, $\beta_0$, $w_0$, $\textbf{y}^0$, and $\gamma_k$ chosen as
\[
\sigma=2,\quad\beta_{0}=\max\left\{ 1,\frac{L_{f}}{\|{\cal{A}}\|^{2}}\right\} ,\quad w_{0}=1,\quad\boldsymbol{y}^{0}=0,\quad\gamma_{k}=\frac{\left(\log2\right)\|c(x^{1})\|}{(k+1)\left[\log(k+2)\right]^{2}} \quad \forall k \ge 1,
\]
as suggested
in \cite[Theorem 2]{ImprovedShrinkingALM20}. Moreover, the starting point for  each
APG\footnote{APG is the name of the ACG subroutine used by iALM.} call is the prox center for the current prox subproblem. SPA1--SPA2 were also  implemented by the authors to be exactly as stated in \cite[Algorithm 2]{ErrorBoundJzhang-ZQLuo2020} with the parameters $\alpha_1$, $p$, $c$, $\beta$, $y_0$, and $z_0$ chosen as
\[
\alpha_1=\frac{\Gamma}{4},\quad p=2(L_{f}+\Gamma\|A\|^{2}),\quad c=\frac{1}{2(L_{f}+\Gamma\|A\|^{2})},\quad\beta=0.5,\quad y_{0}=0,\quad z_{0}=x_{0},
\]
where $\Gamma=1$ in SPA1 and $\Gamma=10$ in SPA2.
Finally, the code for HiAPeM was provided by the authors of \cite{HybridPenaltyAugLag19} with the parameters $\sigma$, $\beta_0$, $\gamma$, $\gamma_1$, $\gamma_2$, $N_0$, and $N_1$ chosen as
\[
\sigma = 3, \quad \beta_0 = 10^{-2}, \quad \gamma = 1.1, \quad \gamma_1 = 1.5, \quad \gamma_2 = 1, \quad N_0 = 100, \quad N_1 = 2.
\]

We next describe numerical and mathematical details that are common to all the experiments. First,
throughout this section, we denote $I$ to be the identity matrix,
$\mathbb{S}^{n}$ to be the set of symmetric $n$-by-$n$ matrices, and
$\mathbb{S}_{+}^{n}$ to be the set of positive semidefinite matrices
in $\mathbb{S}^{n}$. Second, given a tolerance pair $(\hat{\rho},\hat{\eta})\in\r_{++}^{2}$,
a pointed convex cone ${\cal K}$, and $z_{0}\in\dom h$, all the
methods attempt to find a pair $(\hat{z},\hat{p})$
satisfying
\begin{gather}
\begin{gathered}
\frac{{\rm dist }(0,\nabla f(\hat{z})+\pt h(\hat{z})+\nabla g(\hat{z})\hat{p})}{1+\|\nabla f(z_{0})\|} \leq \hat{\rho},\quad \frac{{\rm dist}(g(\hat{z}),N_{{\cal K}^*}(\hat{p}))}{1+{\rm dist}(g(z_{0}),-{\cal K})}\leq \hat{\eta}.
\end{gathered}
\label{eq:comp_crit}
\end{gather}
Third, as all the methods tested utilize an ACG variant to solve a sequence of convex proximal subproblems, the number of iterations reported in the experiments are the total number
of ACG iterations  needed to obtain a quadruple satisfying \eqref{eq:comp_crit}
(including those which fail
to satisfy parameter line searches within the adaptive ACG
variants used in IPL(A), QP(A), and HiAPeM).
Fourth, the bold numbers in each of the tables
of this section indicate the method that performed the most efficiently
for a given metric, e.g., runtime or iteration count. Finally, 
all algorithms described at the beginning of this section are implemented
in MATLAB 2021a and are run on Linux 64-bit machines, each containing
Xeon E5520 processors and at least 8 GB of memory.

We now end with some comments about the choice of algorithms in the experiments presented in
the subsections below.
First,
QP and QP(A) methods are not included in the experiments of Subsections~\ref{subsec:nonconvex_qc_qsdp} and \ref{subsec:nonconvex_qc_qp}  because
their current
implementations are only available for
linearly-constrained problems
(even though they can be extended to
nonlinearly-constrained problems).
Second, HiAPeM is only included
in the experiments of
Subsection~\ref{subsec:nonconvex_qc_qp} because the code
provided to the authors is specifically designed to solve the problem class
considered in that subsection.
Third, S-prox-ALM is only included in the experiments of Subsection~\ref{subsec:nonconvex_qp} because its convergence is only guaranteed when the composite function $h$ is the indicator function of a polyhedron. Finally, we do not include QP and IPL in Subsection~\ref{subsec:nonconvex_qp} because the results of
Subsections~\ref{subsec:nonconvex_qsdp}, \ref{subsec:nonconvex_qc_qsdp}, and \ref{subsec:nonconvex_qc_qp}
show that their adaptive variants are substantially more efficient.

\subsection{\label{subsec:nonconvex_qsdp}Nonconvex QSDP}

Given a pair of dimensions $(\ell,n)\in\mathbb{N}^{2}$, a scalar
pair $(\alpha_1,\alpha_2)\in\r_{++}^{2}$, linear operators ${\cal {\cal A}}:\mathbb{S}_{+}^{n}\mapsto\r^{\ell}$,
${\cal {\cal B}}:\mathbb{S}_{+}^{n}\mapsto\r^{n}$, and ${\cal {\cal C}}:\mathbb{S}_{+}^{n}\mapsto\r^{\ell}$
defined pointwise by 
\[
\left[{\cal A}(Z)\right]_{i}=\left\langle A_{i},Z\right\rangle ,\quad\left[{\cal B}(Z)\right]_{j}=\left\langle B_{j},Z\right\rangle ,\quad\left[{\cal C}(Z)\right]_{i}=\left\langle Q_{i},Z\right\rangle ,
\]
for matrices $\{A_{i}\}_{i=1}^{\ell},\{B_{j}\}_{j=1}^{n},\{Q_{i}\}_{i=1}^{\ell}\subseteq\r^{n\times n}$,
positive diagonal matrix $D\in\r^{n\times n}$, and a vector pair
$(b,d)\in\r^{\ell}\times\r^{\ell}$, we consider
the following nonconvex quadratic semidefinite programming (QSDP)
problem:
\begin{align*}
\min_{z\in \mathbb{S}_+^{n}}\  & -\frac{\alpha_1}{2}\|D{\cal B}(z)\|^{2}+\frac{\alpha_2}{2}\|{\cal C}(z)-d\|^{2}\\
\text{s.t.}\  & {\cal A}(z)=b, \quad  0\preceq z \preceq r I.
\end{align*}
 In particular, the problem instances tested are given in Table~\ref{tab:qsdp} for algorithms QP, QP(A), IPL, IPL(A), and iALM. For additional clarity, we describe below how the instances were generated.

\begin{table}[!tbh]
\begin{centering}
\begin{tabular}{>{\centering}p{0.5cm}>{\centering}p{0.5cm}>{\centering}p{0.4cm}>{\centering}p{0.4cm}|>{\centering}p{0.8cm}>{\centering}p{0.8cm}>{\centering}p{0.8cm}>{\centering}p{0.8cm}>{\centering}p{0.8cm}|>{\centering}p{0.8cm}>{\centering}p{0.8cm}>{\centering}p{0.8cm}>{\centering}p{0.8cm}>{\centering}p{0.8cm}}
\multicolumn{4}{c|}{\textbf{\scriptsize{}Parameters}} & \multicolumn{5}{c|}{\textbf{\scriptsize{}Iteration Count}} & \multicolumn{5}{c}{\textbf{\scriptsize{}Runtime}}\tabularnewline
\hline 
{\tiny{}$n$} & {\tiny{}$r$} & {\tiny{}$m$} & {\tiny{}$L_{f}$} & {\tiny{}iALM} & {\tiny{}QP} & {\tiny{}QP(A)} & {\tiny{}IPL} & {\tiny{}IPL(A)} & {\tiny{}iALM} & {\tiny{}QP} & {\tiny{}QP(A)} & {\tiny{}IPL} & {\tiny{}IPL(A)}\tabularnewline
\hline 
\hline 
{\tiny{}50} & {\tiny{}1.0} & {\tiny{}1} & {\tiny{}10} & {\tiny{}-} & {\tiny{}23296} & {\tiny{}1633} & {\tiny{}18618} & \textbf{\tiny{}1257} & {\tiny{}-} & {\tiny{}201.7} & {\tiny{}17.2} & {\tiny{}172.9} & \textbf{\tiny{}15.1}\tabularnewline
{\tiny{}50} & {\tiny{}1.0} & {\tiny{}1} & {\tiny{}20} & {\tiny{}-} & {\tiny{}15402} & {\tiny{}1210} & {\tiny{}10610} & \textbf{\tiny{}782} & {\tiny{}-} & {\tiny{}132.8} & {\tiny{}12.5} & {\tiny{}98.6} & \textbf{\tiny{}9.3}\tabularnewline
{\tiny{}50} & {\tiny{}1.0} & {\tiny{}1} & {\tiny{}40} & {\tiny{}-} & {\tiny{}12611} & {\tiny{}1076} & {\tiny{}7614} & \textbf{\tiny{}884} & {\tiny{}-} & {\tiny{}108.7} & {\tiny{}11.0} & {\tiny{}70.8} & \textbf{\tiny{}10.5}\tabularnewline
\hline 
{\tiny{}50} & {\tiny{}1.0} & {\tiny{}5} & {\tiny{}40} & {\tiny{}-} & {\tiny{}16499} & {\tiny{}1239} & {\tiny{}10578} & \textbf{\tiny{}753} & {\tiny{}-} & {\tiny{}144.8} & {\tiny{}13.5} & {\tiny{}100.6} & \textbf{\tiny{}9.8}\tabularnewline
{\tiny{}50} & {\tiny{}1.0} & {\tiny{}10} & {\tiny{}40} & {\tiny{}-} & {\tiny{}17868} & {\tiny{}1582} & {\tiny{}15238} & \textbf{\tiny{}1207} & {\tiny{}-} & {\tiny{}157.5} & {\tiny{}17.4} & {\tiny{}147.4} & \textbf{\tiny{}16.1}\tabularnewline
{\tiny{}50} & {\tiny{}1.0} & {\tiny{}20} & {\tiny{}40} & {\tiny{}-} & {\tiny{}74732} & {\tiny{}4425} & {\tiny{}53599} & \textbf{\tiny{}1633} & {\tiny{}-} & {\tiny{}665.1} & {\tiny{}51.6} & {\tiny{}506.2} & \textbf{\tiny{}22.6}\tabularnewline
\hline 
{\tiny{}50} & {\tiny{}5.0} & {\tiny{}1} & {\tiny{}20} & {\tiny{}-} & {\tiny{}40716} & {\tiny{}2648} & {\tiny{}35138} & \textbf{\tiny{}2335} & {\tiny{}-} & {\tiny{}353.3} & {\tiny{}28.3} & {\tiny{}326.9} & \textbf{\tiny{}28.2}\tabularnewline
{\tiny{}50} & {\tiny{}10.0} & {\tiny{}1} & {\tiny{}20} & {\tiny{}-} & {\tiny{}110657} & {\tiny{}6130} & {\tiny{}99621} & \textbf{\tiny{}5998} & {\tiny{}-} & {\tiny{}964.1} & \textbf{\tiny{}66.9} & {\tiny{}928.7} & {\tiny{}72.8}\tabularnewline
{\tiny{}50} & {\tiny{}20.0} & {\tiny{}1} & {\tiny{}20} & {\tiny{}-} & {\tiny{}129175} & {\tiny{}7112} & {\tiny{}116263} & \textbf{\tiny{}6936} & {\tiny{}-} & {\tiny{}1125.8} & \textbf{\tiny{}77.7} & {\tiny{}1088.4} & {\tiny{}86.5}\tabularnewline
\hline 
\hline 
{\tiny{}75} & {\tiny{}1.0} & {\tiny{}1} & {\tiny{}10} & {\tiny{}-} & {\tiny{}41201} & {\tiny{}1948} & {\tiny{}35565} & \textbf{\tiny{}1626} & {\tiny{}-} & {\tiny{}363.6} & {\tiny{}21.0} & {\tiny{}336.2} & \textbf{\tiny{}19.5}\tabularnewline
{\tiny{}75} & {\tiny{}1.0} & {\tiny{}1} & {\tiny{}20} & {\tiny{}-} & {\tiny{}32647} & {\tiny{}1576} & {\tiny{}27857} & \textbf{\tiny{}1289} & {\tiny{}-} & {\tiny{}289.1} & {\tiny{}16.8} & {\tiny{}264.0} & \textbf{\tiny{}15.4}\tabularnewline
{\tiny{}75} & {\tiny{}1.0} & {\tiny{}1} & {\tiny{}40} & {\tiny{}-} & {\tiny{}24932} & {\tiny{}1289} & {\tiny{}19939} & \textbf{\tiny{}984} & {\tiny{}-} & {\tiny{}220.7} & \textbf{\tiny{}13.7} & {\tiny{}202.4} & {\tiny{}18.3}\tabularnewline
\hline 
{\tiny{}75} & {\tiny{}1.0} & {\tiny{}5} & {\tiny{}40} & {\tiny{}-} & {\tiny{}31641} & {\tiny{}1462} & {\tiny{}23537} & \textbf{\tiny{}1025} & {\tiny{}-} & {\tiny{}375.5} & \textbf{\tiny{}17.5} & {\tiny{}317.1} & {\tiny{}17.9}\tabularnewline
{\tiny{}75} & {\tiny{}1.0} & {\tiny{}10} & {\tiny{}40} & {\tiny{}-} & {\tiny{}31874} & {\tiny{}1557} & {\tiny{}25519} & \textbf{\tiny{}1011} & {\tiny{}-} & {\tiny{}367.1} & {\tiny{}27.8} & {\tiny{}344.3} & \textbf{\tiny{}18.4}\tabularnewline
{\tiny{}75} & {\tiny{}1.0} & {\tiny{}20} & {\tiny{}40} & {\tiny{}-} & {\tiny{}38605} & {\tiny{}1945} & {\tiny{}23725} & \textbf{\tiny{}1077} & {\tiny{}-} & {\tiny{}481.9} & {\tiny{}27.3} & {\tiny{}312.9} & \textbf{\tiny{}21.8}\tabularnewline
\hline 
{\tiny{}75} & {\tiny{}5.0} & {\tiny{}1} & {\tiny{}20} & {\tiny{}-} & {\tiny{}92271} & {\tiny{}3830} & {\tiny{}87426} & \textbf{\tiny{}3648} & {\tiny{}-} & {\tiny{}1137.5} & {\tiny{}57.2} & {\tiny{}1088.7} & \textbf{\tiny{}42.0}\tabularnewline
{\tiny{}75} & {\tiny{}10.0} & {\tiny{}1} & {\tiny{}20} & {\tiny{}-} & {\tiny{}104348} & {\tiny{}4245} & {\tiny{}98207} & \textbf{\tiny{}4060} & {\tiny{}-} & {\tiny{}886.5} & \textbf{\tiny{}44.3} & {\tiny{}926.3} & {\tiny{}48.2}\tabularnewline
{\tiny{}75} & {\tiny{}20.0} & {\tiny{}1} & {\tiny{}20} & {\tiny{}-} & {\tiny{}152856} & {\tiny{}5961} & {\tiny{}143057} & \textbf{\tiny{}5807} & {\tiny{}-} & {\tiny{}1312.4} & \textbf{\tiny{}66.2} & {\tiny{}1380.6} & {\tiny{}71.3}\tabularnewline
\hline 
\hline 
{\tiny{}100} & {\tiny{}1.0} & {\tiny{}1} & {\tiny{}10} & {\tiny{}-} & {\tiny{}103570} & {\tiny{}3251} & {\tiny{}95110} & \textbf{\tiny{}2928} & {\tiny{}-} & {\tiny{}1641.3} & {\tiny{}62.2} & {\tiny{}1590.0} & \textbf{\tiny{}61.6}\tabularnewline
{\tiny{}100} & {\tiny{}1.0} & {\tiny{}1} & {\tiny{}20} & {\tiny{}-} & {\tiny{}74587} & {\tiny{}2466} & {\tiny{}66010} & \textbf{\tiny{}2262} & {\tiny{}-} & {\tiny{}1180.4} & \textbf{\tiny{}46.9} & {\tiny{}1102.5} & {\tiny{}47.2}\tabularnewline
{\tiny{}100} & {\tiny{}1.0} & {\tiny{}1} & {\tiny{}40} & {\tiny{}-} & {\tiny{}59253} & {\tiny{}2040} & {\tiny{}50282} & \textbf{\tiny{}1689} & {\tiny{}-} & {\tiny{}934.5} & {\tiny{}38.6} & {\tiny{}837.6} & \textbf{\tiny{}35.1}\tabularnewline
\hline 
{\tiny{}100} & {\tiny{}1.0} & {\tiny{}5} & {\tiny{}40} & {\tiny{}-} & {\tiny{}55305} & {\tiny{}1646} & {\tiny{}46890} & \textbf{\tiny{}1499} & {\tiny{}-} & {\tiny{}880.3} & \textbf{\tiny{}32.4} & {\tiny{}790.3} & {\tiny{}32.9}\tabularnewline
{\tiny{}100} & {\tiny{}1.0} & {\tiny{}10} & {\tiny{}40} & {\tiny{}-} & {\tiny{}82005} & {\tiny{}3133} & {\tiny{}61144} & \textbf{\tiny{}2698} & {\tiny{}-} & {\tiny{}1311.5} & {\tiny{}63.9} & {\tiny{}1034.8} & \textbf{\tiny{}62.2}\tabularnewline
{\tiny{}100} & {\tiny{}1.0} & {\tiny{}20} & {\tiny{}40} & {\tiny{}-} & {\tiny{}70045} & {\tiny{}2266} & {\tiny{}50591} & \textbf{\tiny{}1499} & {\tiny{}-} & {\tiny{}1127.7} & {\tiny{}46.7} & {\tiny{}866.5} & \textbf{\tiny{}36.3}\tabularnewline
\hline 
{\tiny{}100} & {\tiny{}5.0} & {\tiny{}1} & {\tiny{}20} & {\tiny{}-} & {\tiny{}129478} & {\tiny{}3998} & {\tiny{}119623} & \textbf{\tiny{}3649} & {\tiny{}-} & {\tiny{}2059.9} & {\tiny{}77.6} & {\tiny{}2008.2} & \textbf{\tiny{}76.8}\tabularnewline
{\tiny{}100} & {\tiny{}10.0} & {\tiny{}1} & {\tiny{}20} & {\tiny{}-} & {\tiny{}174666} & {\tiny{}5178} & {\tiny{}163769} & \textbf{\tiny{}4844} & {\tiny{}-} & {\tiny{}2774.6} & \textbf{\tiny{}99.5} & {\tiny{}2750.9} & {\tiny{}101.7}\tabularnewline
{\tiny{}100} & {\tiny{}20.0} & {\tiny{}1} & {\tiny{}20} & {\tiny{}-} & {\tiny{}238866} & {\tiny{}6887} & {\tiny{}225963} & \textbf{\tiny{}6563} & {\tiny{}-} & {\tiny{}3798.7} & \textbf{\tiny{}133.3} & {\tiny{}3789.0} & {\tiny{}139.3}\tabularnewline
\end{tabular}
\par\end{centering}
\caption{Iteration counts and runtimes (in seconds) for the Nonconvex QSDP
Problem in Subsection~\ref{subsec:nonconvex_qsdp}. Cells marked
with \textquotedblleft --\textquotedblright{} are those that did not
obtain a solution within the given time limit.\label{tab:qsdp}}
\end{table}

First, we chose $\ell=10$, varied $n$ across different problem instances, set
$\hat{\rho}=10^{-2}$ and $\hat{\eta}=10^{-4}$, and ensured that only 5\%
of the entries of $A_{i},B_{j},$ and $Q_{i}$ were set to be nonzero.
Second, the entries of $A_{i}$, $B_{j}$, $Q_{i}$, and $d$ (resp.
$D$) were generated by sampling from the uniform distribution ${\cal U}[0,1]$
(resp. ${\cal U}\{1,...,1000\}$). Third, the vector $b$ was set
to $b={\cal A}({\rm diag}(u))$ where $u$ is a random vector in ${\cal U}[0,r]^{n\times n}$. Fourth, the initial starting point $z_{0}$ was set to be the zero matrix.
Finally, each problem instance considered
was based on a specific triple $(r,m_f,L_{f})$, for which the scalar pair
$(\alpha_1,\alpha_2)$ is selected so that $L_{f}=\lam_{\max}(\nabla^{2}f)$
and $-m_f=\lam_{\min}(\nabla^{2}f)$, and we set a time limit of 6000 seconds.

\subsection{\label{subsec:nonconvex_qc_qsdp}Nonconvex QC-QSDP}

Given a dimension pair $(\ell,n)\in\mathbb{N}^{2}$, scalar $r>0$, matrices $P,Q,R\in\r^{n\times n}$,
and the quantities $(\alpha_1,\alpha_2)$, ${\cal {\cal B}}$, ${\cal {\cal C}}$, $D$, and $d$ as in Subsection~\ref{subsec:nonconvex_qsdp},
we consider the nonconvex quadratically constrained 
QSDP (QC-QSDP) problem:
\begin{align*}
\begin{alignedat}{2}\min_{Z}\  & -\frac{\alpha_1}{2}\|D{\cal B}(Z)\|^{2}+\frac{\alpha_2}{2}\|{\cal C}(Z)-d\|^{2}\\
\text{s.t.}\  & \frac{1}{2}(PZ)^{*}PZ+\frac{1}{2}Q^{*}QZ+\frac{1}{2}ZQ^{*}Q\preceq R^{*}R,\\
 & \quad 0\preceq Z \preceq r I.
\end{alignedat}
\end{align*}
In particular, the problem instances tested are given in Table~\ref{tab:qcqsdp} for algorithms iALM, IPL, and IPL(A). For additional clarity, we describe below how the instances were generated. 

\begin{table}[!tbh]
\begin{centering}
\begin{tabular}{>{\centering}p{0.5cm}>{\centering}p{0.5cm}>{\centering}p{0.5cm}>{\centering}p{0.5cm}>{\centering}p{0.5cm}|>{\centering}p{0.9cm}>{\centering}p{0.9cm}>{\centering}p{0.9cm}|>{\centering}p{0.9cm}>{\centering}p{0.9cm}>{\centering}p{0.9cm}}
\multicolumn{5}{c|}{\textbf{\scriptsize{}Parameters}} & \multicolumn{3}{c|}{\textbf{\scriptsize{}Iteration Count}} & \multicolumn{3}{c}{\textbf{\scriptsize{}Runtime}}\tabularnewline
\hline 
{\tiny{}$n$} & {\tiny{}$r$} & {\tiny{}$m$} & {\tiny{}$L_{f}$} & {\tiny{}$L_{g}$} & {\tiny{}iALM} & {\tiny{}IPL} & {\tiny{}IPL(A)} & {\tiny{}iALM} & {\tiny{}IPL} & {\tiny{}IPL(A)}\tabularnewline
\hline 
\hline 
{\tiny{}50} & {\tiny{}1.0} & {\tiny{}$10^{0}$} & {\tiny{}$10^{3}$} & {\tiny{}6.2} & {\tiny{}-} & {\tiny{}11058} & \textbf{\tiny{}6760} & {\tiny{}-} & {\tiny{}108.5} & \textbf{\tiny{}80.1}\tabularnewline
{\tiny{}50} & {\tiny{}1.0} & {\tiny{}$10^{0}$} & {\tiny{}$10^{4}$} & {\tiny{}10.9} & {\tiny{}-} & {\tiny{}244} & \textbf{\tiny{}213} & {\tiny{}-} & {\tiny{}2.4} & \textbf{\tiny{}2.4}\tabularnewline
{\tiny{}50} & {\tiny{}1.0} & {\tiny{}$10^{0}$} & {\tiny{}$10^{5}$} & {\tiny{}17.1} & {\tiny{}1862} & {\tiny{}778} & \textbf{\tiny{}580} & {\tiny{}18.2} & {\tiny{}7.5} & \textbf{\tiny{}6.7}\tabularnewline
\hline 
{\tiny{}50} & {\tiny{}1.0} & {\tiny{}$10^{1}$} & {\tiny{}$10^{5}$} & {\tiny{}10.9} & {\tiny{}-} & {\tiny{}244} & \textbf{\tiny{}213} & {\tiny{}-} & {\tiny{}2.3} & \textbf{\tiny{}2.4}\tabularnewline
{\tiny{}50} & {\tiny{}1.0} & {\tiny{}$10^{2}$} & {\tiny{}$10^{5}$} & {\tiny{}6.2} & {\tiny{}-} & {\tiny{}11058} & \textbf{\tiny{}6760} & {\tiny{}-} & {\tiny{}107.5} & \textbf{\tiny{}79.7}\tabularnewline
{\tiny{}50} & {\tiny{}1.0} & {\tiny{}$10^{3}$} & {\tiny{}$10^{5}$} & {\tiny{}2.7} & {\tiny{}-} & {\tiny{}13062} & \textbf{\tiny{}7381} & {\tiny{}-} & {\tiny{}134.4} & \textbf{\tiny{}89.5}\tabularnewline
\hline 
{\tiny{}50} & {\tiny{}5.0} & {\tiny{}$10^{0}$} & {\tiny{}$10^{5}$} & {\tiny{}3.4} & {\tiny{}724} & {\tiny{}778} & \textbf{\tiny{}580} & {\tiny{}7.2} & {\tiny{}7.5} & \textbf{\tiny{}6.7}\tabularnewline
{\tiny{}50} & {\tiny{}10.0} & {\tiny{}$10^{0}$} & {\tiny{}$10^{5}$} & {\tiny{}1.7} & {\tiny{}726} & {\tiny{}778} & \textbf{\tiny{}580} & {\tiny{}7.1} & {\tiny{}7.4} & \textbf{\tiny{}6.7}\tabularnewline
{\tiny{}50} & {\tiny{}20.0} & {\tiny{}$10^{0}$} & {\tiny{}$10^{5}$} & {\tiny{}0.9} & {\tiny{}720} & {\tiny{}778} & \textbf{\tiny{}580} & {\tiny{}7.1} & {\tiny{}7.5} & \textbf{\tiny{}6.7}\tabularnewline
\hline 
\hline 
{\tiny{}75} & {\tiny{}1.0} & {\tiny{}$10^{0}$} & {\tiny{}$10^{3}$} & {\tiny{}8.9} & {\tiny{}-} & {\tiny{}22766} & \textbf{\tiny{}12386} & {\tiny{}-} & {\tiny{}418.4} & \textbf{\tiny{}280.3}\tabularnewline
{\tiny{}75} & {\tiny{}1.0} & {\tiny{}$10^{0}$} & {\tiny{}$10^{4}$} & {\tiny{}15.8} & {\tiny{}-} & {\tiny{}244} & \textbf{\tiny{}212} & {\tiny{}-} & {\tiny{}4.4} & \textbf{\tiny{}4.5}\tabularnewline
{\tiny{}75} & {\tiny{}1.0} & {\tiny{}$10^{0}$} & {\tiny{}$10^{5}$} & {\tiny{}24.7} & {\tiny{}3409} & {\tiny{}777} & \textbf{\tiny{}579} & {\tiny{}61.5} & {\tiny{}14.1} & \textbf{\tiny{}12.8}\tabularnewline
\hline 
{\tiny{}75} & {\tiny{}1.0} & {\tiny{}$10^{1}$} & {\tiny{}$10^{5}$} & {\tiny{}15.8} & {\tiny{}-} & {\tiny{}244} & \textbf{\tiny{}212} & {\tiny{}-} & {\tiny{}4.4} & \textbf{\tiny{}4.6}\tabularnewline
{\tiny{}75} & {\tiny{}1.0} & {\tiny{}$10^{2}$} & {\tiny{}$10^{5}$} & {\tiny{}8.9} & {\tiny{}-} & {\tiny{}20257} & \textbf{\tiny{}12317} & {\tiny{}-} & {\tiny{}377.3} & \textbf{\tiny{}281.3}\tabularnewline
{\tiny{}75} & {\tiny{}1.0} & {\tiny{}$10^{3}$} & {\tiny{}$10^{5}$} & {\tiny{}4.0} & {\tiny{}-} & {\tiny{}135657} & \textbf{\tiny{}19950} & {\tiny{}-} & {\tiny{}2515.9} & \textbf{\tiny{}571.6}\tabularnewline
\hline 
{\tiny{}75} & {\tiny{}5.0} & {\tiny{}$10^{0}$} & {\tiny{}$10^{5}$} & {\tiny{}4.9} & {\tiny{}5879} & {\tiny{}777} & \textbf{\tiny{}579} & {\tiny{}140.4} & {\tiny{}14.2} & \textbf{\tiny{}13.0}\tabularnewline
{\tiny{}75} & {\tiny{}10.0} & {\tiny{}$10^{0}$} & {\tiny{}$10^{5}$} & {\tiny{}2.5} & {\tiny{}1115} & {\tiny{}777} & \textbf{\tiny{}579} & {\tiny{}20.2} & {\tiny{}14.2} & \textbf{\tiny{}13.0}\tabularnewline
{\tiny{}75} & {\tiny{}20.0} & {\tiny{}$10^{0}$} & {\tiny{}$10^{5}$} & {\tiny{}1.2} & {\tiny{}10832} & {\tiny{}777} & \textbf{\tiny{}579} & {\tiny{}194.9} & {\tiny{}14.2} & \textbf{\tiny{}13.0}\tabularnewline
\hline 
\hline 
{\tiny{}100} & {\tiny{}1.0} & {\tiny{}$10^{0}$} & {\tiny{}$10^{3}$} & {\tiny{}11.9} & {\tiny{}-} & {\tiny{}40755} & \textbf{\tiny{}16292} & {\tiny{}-} & {\tiny{}1230.0} & \textbf{\tiny{}612.6}\tabularnewline
{\tiny{}100} & {\tiny{}1.0} & {\tiny{}$10^{0}$} & {\tiny{}$10^{4}$} & {\tiny{}21.2} & {\tiny{}-} & {\tiny{}252} & \textbf{\tiny{}213} & {\tiny{}-} & {\tiny{}7.5} & \textbf{\tiny{}7.7}\tabularnewline
{\tiny{}100} & {\tiny{}1.0} & {\tiny{}$10^{0}$} & {\tiny{}$10^{5}$} & {\tiny{}33.2} & {\tiny{}4710} & {\tiny{}778} & \textbf{\tiny{}580} & {\tiny{}128.2} & {\tiny{}23.1} & \textbf{\tiny{}21.5}\tabularnewline
\hline 
{\tiny{}100} & {\tiny{}1.0} & {\tiny{}$10^{1}$} & {\tiny{}$10^{5}$} & {\tiny{}21.2} & {\tiny{}-} & {\tiny{}244} & \textbf{\tiny{}213} & {\tiny{}-} & {\tiny{}7.3} & \textbf{\tiny{}7.7}\tabularnewline
{\tiny{}100} & {\tiny{}1.0} & {\tiny{}$10^{2}$} & {\tiny{}$10^{5}$} & {\tiny{}11.9} & {\tiny{}-} & {\tiny{}158085} & \textbf{\tiny{}22101} & {\tiny{}-} & {\tiny{}4714.2} & \textbf{\tiny{}831.4}\tabularnewline
{\tiny{}100} & {\tiny{}1.0} & {\tiny{}$10^{3}$} & {\tiny{}$10^{5}$} & {\tiny{}5.3} & {\tiny{}-} & {\tiny{}-} & \textbf{\tiny{}61179} & {\tiny{}-} & {\tiny{}-} & \textbf{\tiny{}2306.2}\tabularnewline
\hline 
{\tiny{}100} & {\tiny{}5.0} & {\tiny{}$10^{0}$} & {\tiny{}$10^{5}$} & {\tiny{}6.6} & {\tiny{}3575} & {\tiny{}778} & \textbf{\tiny{}580} & {\tiny{}97.7} & {\tiny{}23.1} & \textbf{\tiny{}21.5}\tabularnewline
{\tiny{}100} & {\tiny{}10.0} & {\tiny{}$10^{0}$} & {\tiny{}$10^{5}$} & {\tiny{}3.3} & {\tiny{}2406} & {\tiny{}778} & \textbf{\tiny{}580} & {\tiny{}65.8} & {\tiny{}23.3} & \textbf{\tiny{}21.5}\tabularnewline
{\tiny{}100} & {\tiny{}20.0} & {\tiny{}$10^{0}$} & {\tiny{}$10^{5}$} & {\tiny{}1.7} & {\tiny{}1706} & {\tiny{}778} & \textbf{\tiny{}580} & {\tiny{}46.5} & {\tiny{}23.1} & \textbf{\tiny{}21.4}\tabularnewline
\end{tabular}
\par\end{centering}
\caption{Iteration counts and runtimes (in seconds) for Nonconvex QC-QSDP Problems
in Subsection~\ref{subsec:nonconvex_qc_qsdp}. Cells marked with
\textquotedblleft --\textquotedblright{} are those that did not obtain
a solution within the given time limit.\label{tab:qcqsdp}}
\end{table}

First, we chose $\ell=10$, varied $n$ across different problem instances, and chose $\hat{\rho}=\hat{\eta}=10^{-3}$. Second, the
quantities ${\cal B}$, ${\cal C}$, $D$, and $d$ were generated
in the same way as in Subsection~\ref{subsec:nonconvex_qsdp},
 the matrix $R$ was set to $I$, and the entries
of matrices $P$ and $Q$ were sampled from the uniform distributions
$\log(L_f/m_f)\cdot {\cal U}[0,1/\sqrt{100nr}]$ and ${\cal U}[0,1/n]$, respectively. Third,
the initial starting  point $z_{0}$ was set to
be the zero matrix. 
Finally, like in Subsection~\ref{subsec:nonconvex_qsdp},
each problem instance considered was based on a specific triple $(r,m_f,L_{f})$,
for which the scalar pair $(\alpha_1,\alpha_2)$ is selected so that $L_{f}=\lam_{\max}(\nabla^{2}f)$
and $-m_f=\lam_{\min}(\nabla^{2}f)$, and a time limit of 6000 seconds.

\subsection{\label{subsec:nonconvex_qc_qp}Nonconvex QC-QP}

Given a dimension pair $(\ell,n)\in\mathbb{N}^{2}$, matrices $\{Q_j\}_{j=0}^\ell$, vectors $\{c_j\}_{j=0}^\ell$, scalars $\{d_j\}_{j=0}^\ell$, and scalar $r>0$,
we consider the nonconvex quadratically constrained 
quadratic programming (QC-QP) problem:
\begin{align*}
\begin{alignedat}{2}\min_{z}\  & \frac{1}{2} z^T Q_0 z + c_0^T z + d_0\\
\text{s.t.}\  & \frac{1}{2} z^T Q_j z + c_j^T z + d_j \leq 0,& \quad j\in\{1,...,\ell\}, \\
 & -r\leq z_i \leq r, & \quad i\in\{1,...,n\},
\end{alignedat}
\end{align*}
where $Q_j \succeq 0$ for $j=1,...,\ell$, $Q_0$ is indefinite, and the constraint set has nonempty interior. In particular, the problem tested are given in Table~\ref{tab:qcqp} for algorithms iALM, IPL, IPL(A), and HPM. For additional clarity, we describe below how the instances were generated and the organization of the tables.

\begin{table}[!tbh]
\begin{centering}
\begin{tabular}{>{\centering}p{0.5cm}>{\centering}p{0.5cm}>{\centering}p{0.4cm}>{\centering}p{0.4cm}>{\centering}p{0.5cm}|>{\centering}p{1cm}>{\centering}p{1cm}>{\centering}p{1cm}>{\centering}p{1cm}}
\multicolumn{5}{c|}{\textbf{\scriptsize{}Parameters}} & \multicolumn{4}{c}{\textbf{\scriptsize{}Iteration Count}}\tabularnewline
\hline 
{\tiny{}$n$} & {\tiny{}$r$} & {\tiny{}$m$} & {\tiny{}$L_{f}$} & {\tiny{}$L_{g}$} & {\tiny{}iALM} & {\tiny{}IPL} & {\tiny{}IPL(A)} & {\tiny{}HPM}\tabularnewline
\hline 
\hline 
{\tiny{}250} & {\tiny{}1.0} & {\tiny{}$10^{0}$} & {\tiny{}$10^{3}$} & {\tiny{}7.3} & {\tiny{}-} & {\tiny{}2690} & \textbf{\tiny{}273} & {\tiny{}2679}\tabularnewline
{\tiny{}250} & {\tiny{}1.0} & {\tiny{}$10^{0}$} & {\tiny{}$10^{4}$} & {\tiny{}9.7} & {\tiny{}-} & {\tiny{}2973} & \textbf{\tiny{}644} & {\tiny{}27934}\tabularnewline
{\tiny{}250} & {\tiny{}1.0} & {\tiny{}$10^{0}$} & {\tiny{}$10^{5}$} & {\tiny{}12.1} & {\tiny{}-} & {\tiny{}3521} & \textbf{\tiny{}1788} & {\tiny{}59381}\tabularnewline
\hline 
{\tiny{}250} & {\tiny{}1.0} & {\tiny{}$10^{1}$} & {\tiny{}$10^{5}$} & {\tiny{}9.7} & {\tiny{}-} & {\tiny{}2690} & \textbf{\tiny{}1717} & {\tiny{}60335}\tabularnewline
{\tiny{}250} & {\tiny{}1.0} & {\tiny{}$10^{2}$} & {\tiny{}$10^{5}$} & {\tiny{}7.3} & {\tiny{}-} & {\tiny{}947} & \textbf{\tiny{}676} & {\tiny{}8206}\tabularnewline
{\tiny{}250} & {\tiny{}1.0} & {\tiny{}$10^{3}$} & {\tiny{}$10^{5}$} & {\tiny{}4.8} & {\tiny{}-} & {\tiny{}487} & \textbf{\tiny{}390} & {\tiny{}8262}\tabularnewline
\hline 
{\tiny{}250} & {\tiny{}5.0} & {\tiny{}$10^{0}$} & {\tiny{}$10^{5}$} & {\tiny{}12.1} & {\tiny{}-} & {\tiny{}13766} & \textbf{\tiny{}863} & {\tiny{}14963}\tabularnewline
{\tiny{}250} & {\tiny{}10.0} & {\tiny{}$10^{0}$} & {\tiny{}$10^{5}$} & {\tiny{}12.1} & {\tiny{}-} & {\tiny{}27590} & \textbf{\tiny{}1632} & {\tiny{}11390}\tabularnewline
{\tiny{}250} & {\tiny{}20.0} & {\tiny{}$10^{0}$} & {\tiny{}$10^{5}$} & {\tiny{}12.1} & {\tiny{}-} & {\tiny{}28430} & \textbf{\tiny{}2694} & {\tiny{}10545}\tabularnewline
\hline 
\hline 
{\tiny{}500} & {\tiny{}1.0} & {\tiny{}$10^{0}$} & {\tiny{}$10^{3}$} & {\tiny{}7.3} & {\tiny{}-} & {\tiny{}3834} & \textbf{\tiny{}332} & {\tiny{}2383}\tabularnewline
{\tiny{}500} & {\tiny{}1.0} & {\tiny{}$10^{0}$} & {\tiny{}$10^{4}$} & {\tiny{}9.7} & {\tiny{}-} & {\tiny{}3287} & \textbf{\tiny{}659} & {\tiny{}26618}\tabularnewline
{\tiny{}500} & {\tiny{}1.0} & {\tiny{}$10^{0}$} & {\tiny{}$10^{5}$} & {\tiny{}12.1} & {\tiny{}-} & {\tiny{}4316} & \textbf{\tiny{}2554} & {\tiny{}49287}\tabularnewline
\hline 
{\tiny{}500} & {\tiny{}1.0} & {\tiny{}$10^{1}$} & {\tiny{}$10^{5}$} & {\tiny{}9.7} & {\tiny{}-} & {\tiny{}3605} & \textbf{\tiny{}1912} & {\tiny{}61336}\tabularnewline
{\tiny{}500} & {\tiny{}1.0} & {\tiny{}$10^{2}$} & {\tiny{}$10^{5}$} & {\tiny{}7.3} & {\tiny{}-} & {\tiny{}1498} & \textbf{\tiny{}908} & {\tiny{}9221}\tabularnewline
{\tiny{}500} & {\tiny{}1.0} & {\tiny{}$10^{3}$} & {\tiny{}$10^{5}$} & {\tiny{}4.8} & {\tiny{}-} & {\tiny{}1000} & \textbf{\tiny{}750} & {\tiny{}8659}\tabularnewline
\hline 
{\tiny{}500} & {\tiny{}5.0} & {\tiny{}$10^{0}$} & {\tiny{}$10^{5}$} & {\tiny{}12.1} & {\tiny{}-} & {\tiny{}14452} & \textbf{\tiny{}1075} & {\tiny{}13387}\tabularnewline
{\tiny{}500} & {\tiny{}10.0} & {\tiny{}$10^{0}$} & {\tiny{}$10^{5}$} & {\tiny{}12.1} & {\tiny{}-} & {\tiny{}29301} & \textbf{\tiny{}1877} & {\tiny{}10549}\tabularnewline
{\tiny{}500} & {\tiny{}20.0} & {\tiny{}$10^{0}$} & {\tiny{}$10^{5}$} & {\tiny{}12.1} & {\tiny{}-} & {\tiny{}91119} & \textbf{\tiny{}4720} & {\tiny{}7311}\tabularnewline
\hline 
\hline 
{\tiny{}1000} & {\tiny{}1.0} & {\tiny{}$10^{0}$} & {\tiny{}$10^{3}$} & {\tiny{}7.3} & {\tiny{}-} & {\tiny{}8862} & \textbf{\tiny{}679} & {\tiny{}16812}\tabularnewline
{\tiny{}1000} & {\tiny{}1.0} & {\tiny{}$10^{0}$} & {\tiny{}$10^{4}$} & {\tiny{}9.7} & {\tiny{}-} & {\tiny{}4678} & \textbf{\tiny{}726} & {\tiny{}22044}\tabularnewline
{\tiny{}1000} & {\tiny{}1.0} & {\tiny{}$10^{0}$} & {\tiny{}$10^{5}$} & {\tiny{}12.1} & {\tiny{}-} & {\tiny{}5969} & \textbf{\tiny{}1825} & {\tiny{}42739}\tabularnewline
\hline 
{\tiny{}1000} & {\tiny{}1.0} & {\tiny{}$10^{1}$} & {\tiny{}$10^{5}$} & {\tiny{}9.7} & {\tiny{}-} & {\tiny{}5108} & \textbf{\tiny{}2026} & {\tiny{}58180}\tabularnewline
{\tiny{}1000} & {\tiny{}1.0} & {\tiny{}$10^{2}$} & {\tiny{}$10^{5}$} & {\tiny{}7.3} & {\tiny{}-} & {\tiny{}1018} & \textbf{\tiny{}594} & {\tiny{}142579}\tabularnewline
{\tiny{}1000} & {\tiny{}1.0} & {\tiny{}$10^{3}$} & {\tiny{}$10^{5}$} & {\tiny{}4.8} & {\tiny{}-} & {\tiny{}1187} & \textbf{\tiny{}847} & {\tiny{}36673}\tabularnewline
\hline 
{\tiny{}1000} & {\tiny{}5.0} & {\tiny{}$10^{0}$} & {\tiny{}$10^{5}$} & {\tiny{}12.1} & {\tiny{}-} & {\tiny{}13553} & \textbf{\tiny{}1491} & {\tiny{}17706}\tabularnewline
{\tiny{}1000} & {\tiny{}10.0} & {\tiny{}$10^{0}$} & {\tiny{}$10^{5}$} & {\tiny{}12.1} & {\tiny{}-} & {\tiny{}26983} & \textbf{\tiny{}2621} & {\tiny{}11514}\tabularnewline
{\tiny{}1000} & {\tiny{}20.0} & {\tiny{}$10^{0}$} & {\tiny{}$10^{5}$} & {\tiny{}12.1} & {\tiny{}-} & {\tiny{}53820} & \textbf{\tiny{}5658} & {\tiny{}13451}\tabularnewline
\end{tabular}
\par\end{centering}
\caption{Iteration counts for the Nonconvex QC-QP Problem in Subsection~\ref{subsec:nonconvex_qc_qp}.
Cells marked with \textquotedblleft --\textquotedblright{} are those
that did not obtain a solution within the given time.\label{tab:qcqp}}
\end{table}

First, we chose $\ell=10$, varied $n$ across different problem instances, and set $\hat{\rho}=\hat{\eta}=10^{-5}$. Second, the entries of $d_0$ and $c_j$ for $j=0,..,\ell$ were generated from the ${\cal U}[0,1]$ distribution. 
On the other hand, the entries of $d_j$ were generated from the $-20-10\cdot{\cal U}[0,10]$ distribution, the eigenvectors of $Q_j$ were taken from the QR decomposition of a random matrix from the ${\cal U}[0,1]^{n\times n}$ distribution, 
the eigenvalues of $Q_0$ are taken from the ${\cal U}[-m_f,L_f]$ distribution for a given $(m_f,L_f)\in \mathbb \r^2$, and the eigenvalues of $Q_j$ for $j=1,..,n$ are taken from the $\log(L_f/m_f) \cdot {\cal U}[0,1/3]$ distribution.
 Third, 
the initial starting  point $z_{0}$ was taken from the ${\cal U}[-r,r]^{n\times n}$ distribution. 
Finally,
each problem instance considered was based on a specific triple $(r,m_f,L_{f})$, that specifies the eigenvalues for $Q_0$ and the domain of $h$, a time limit of 3000 seconds, and an iteration limit of 1000000.

Also, for the sake of fairness, we compare HPM against iALM, IPL, and IPL(A) in terms of ACG iteration counts only. 
This is because: (i) all the tested methods perform ACG iterations that essentially require the same amount of effort; and (ii) there is substantially more computational overhead found in the more general implementations of iALM, IPL, and IPL(A) compared to the more specialized implementation of HPM
\footnote{More specifically, the implementation of HPM given by authors of \cite{HybridPenaltyAugLag19} takes the problem data $\{Q_j\}_{j=0}^{\ell}$, $\{c_j\}_{j=0}^{\ell}$, $\{d_j\}_{j=0}^{\ell}$, and $r$ as input and directly applies the HiAPeM algorithm instance for QC-QP problems. In contrast, the implementations of iALM, IPL, and IPL(A) take function oracles for $f$, $\nabla f$, $h$, $g$, $\nabla g$, and \[
{\rm prox}_{\lam h}(\cdot) = \argmin_{u\in \dom h}\{\lam h(u)+ \frac{1}{2}\|u-z\|^2\}, \quad \Pi_{\cal K}(\cdot), \quad \Pi_{\cal K^*}(\cdot),
\]
as input and manipulate these oracles to run their algorithm instances. As executing floating-point operations is substantially less costly than manipulating (symbolic) function oracles, 
the HPM implementation is drastically more efficient on an iteration-to-iteration basis (roughly 8-10x more) compared to the iALM, IPL, and IPL(A) implementations, at the cost of a less general-purpose API.}.

\subsection{\label{subsec:nonconvex_qp}Nonconvex QP}

Given a pair of dimensions $(\ell,n)\in\mathbb{N}^{2}$, a scalar
pair $(\omega_1, \omega_2)\in\R_{++}^{2}$, matrices $Q,C\in\R^{\ell\times n}$
and $B\in\r^{n\times n}$, positive diagonal matrix $D\in\R^{n\times n}$,
and a vector pair $(b,d)\in\R^{\ell}\times\R^{\ell}$, we consider
the problem 
\begin{align*}
\begin{aligned}
\min_{z}\  & f(z)-\frac{\omega_{1}}{2}\|DBz\|^{2}+\frac{\omega_{2}}{2}\|{\cal C}z-d\|^{2}\\
\text{s.t.}\  & Qz=b, \\
& -r\leq z_i \leq r, & \quad i\in\{1,...,n\}.
\end{aligned}
\end{align*}
In particular, the problem instances tested are given in Table~\ref{tab:qp} for algorithms IPL(A), QP(A), SPA1, and SPA2. For additional clarity, we describe below some differences between NL-IAPIAL and S-prox-ALM, as well as how the instances were generated. 

\begin{table}[!tbh]
\begin{centering}
\begin{tabular}{>{\centering}p{0.5cm}>{\centering}p{0.5cm}>{\centering}p{0.4cm}>{\centering}p{0.4cm}|>{\centering}p{0.85cm}>{\centering}p{0.85cm}>{\centering}p{0.85cm}>{\centering}p{0.85cm}>{\centering}p{0.85cm}|>{\centering}p{1cm}>{\centering}p{1cm}>{\centering}p{1cm}>{\centering}p{1cm}>{\centering}p{1cm}}
\multicolumn{4}{c|}{\textbf{\scriptsize{}Parameters}} & \multicolumn{5}{c|}{\textbf{\scriptsize{}Iteration Count}} & \multicolumn{5}{c}{\textbf{\scriptsize{}Residual $\hat{r}$/Runtime}}\tabularnewline
\hline 
{\tiny{}$n$} & {\tiny{}$r$} & {\tiny{}$m$} & {\tiny{}$L_{f}$} & {\tiny{}iALM} & {\tiny{}QP(A)} & {\tiny{}IPL(A)} & {\tiny{}SPA1} & {\tiny{}SPA2} & {\tiny{}iALM} & {\tiny{}QP(A)} & {\tiny{}IPL(A)} & {\tiny{}SPA1} & {\tiny{}SPA2}\tabularnewline
\hline 
\hline 
{\tiny{}250} & {\tiny{}1.0} & {\tiny{}$10^{0}$} & {\tiny{}$10^{3}$} & {\tiny{}111250} & {\tiny{}53625} & \textbf{\tiny{}23000} & {\tiny{}-} & {\tiny{}-} & {\tiny{}-/894} & {\tiny{}-/403} & \textbf{\tiny{}-/177} & {\tiny{}3E-04/-} & {\tiny{}2E-03/-}\tabularnewline
{\tiny{}250} & {\tiny{}1.0} & {\tiny{}$10^{0}$} & {\tiny{}$10^{4}$} & {\tiny{}103710} & {\tiny{}60997} & \textbf{\tiny{}50195} & {\tiny{}-} & {\tiny{}-} & {\tiny{}-/1009} & {\tiny{}-/541} & \textbf{\tiny{}-/452} & {\tiny{}3E-04/-} & {\tiny{}3E-04/-}\tabularnewline
{\tiny{}250} & {\tiny{}1.0} & {\tiny{}$10^{0}$} & {\tiny{}$10^{5}$} & {\tiny{}58049} & {\tiny{}38963} & \textbf{\tiny{}30024} & {\tiny{}-} & {\tiny{}-} & {\tiny{}-/406} & {\tiny{}-/255} & \textbf{\tiny{}-/199} & {\tiny{}2E-05/-} & {\tiny{}2E-05/-}\tabularnewline
\hline 
{\tiny{}250} & {\tiny{}1.0} & {\tiny{}$10^{1}$} & {\tiny{}$10^{5}$} & {\tiny{}103800} & {\tiny{}60851} & \textbf{\tiny{}50195} & {\tiny{}-} & {\tiny{}-} & {\tiny{}-/550} & {\tiny{}-/344} & \textbf{\tiny{}-/284} & {\tiny{}2E-04/-} & {\tiny{}2E-04/-}\tabularnewline
{\tiny{}250} & {\tiny{}1.0} & {\tiny{}$10^{2}$} & {\tiny{}$10^{5}$} & {\tiny{}130970} & {\tiny{}49208} & \textbf{\tiny{}20775} & {\tiny{}-} & {\tiny{}-} & {\tiny{}-/695} & {\tiny{}-/277} & \textbf{\tiny{}-/119} & {\tiny{}4E-04/-} & {\tiny{}3E-04/-}\tabularnewline
{\tiny{}250} & {\tiny{}1.0} & {\tiny{}$10^{3}$} & {\tiny{}$10^{5}$} & {\tiny{}427430} & {\tiny{}279680} & \textbf{\tiny{}16146} & {\tiny{}269460} & {\tiny{}256820} & {\tiny{}-/2257} & {\tiny{}-/1609} & \textbf{\tiny{}-/96} & {\tiny{}-/1860} & {\tiny{}-/1771}\tabularnewline
\hline 
{\tiny{}250} & {\tiny{}5.0} & {\tiny{}$10^{0}$} & {\tiny{}$10^{5}$} & {\tiny{}52603} & {\tiny{}40483} & \textbf{\tiny{}33431} & {\tiny{}-} & {\tiny{}-} & {\tiny{}-/277} & {\tiny{}-/228} & \textbf{\tiny{}-/187} & {\tiny{}2E-05/-} & {\tiny{}2E-05/-}\tabularnewline
{\tiny{}250} & {\tiny{}10.0} & {\tiny{}$10^{0}$} & {\tiny{}$10^{5}$} & {\tiny{}67225} & {\tiny{}41561} & \textbf{\tiny{}33706} & {\tiny{}-} & {\tiny{}-} & {\tiny{}-/355} & {\tiny{}-/233} & \textbf{\tiny{}-/190} & {\tiny{}2E-05/-} & {\tiny{}2E-05/-}\tabularnewline
{\tiny{}250} & {\tiny{}20.0} & {\tiny{}$10^{0}$} & {\tiny{}$10^{5}$} & {\tiny{}57393} & {\tiny{}41786} & \textbf{\tiny{}34756} & {\tiny{}-} & {\tiny{}-} & {\tiny{}-/302} & {\tiny{}-/234} & \textbf{\tiny{}-/195} & {\tiny{}2E-05/-} & {\tiny{}2E-05/-}\tabularnewline
\hline 
\hline 
{\tiny{}500} & {\tiny{}1.0} & {\tiny{}$10^{0}$} & {\tiny{}$10^{3}$} & {\tiny{}-} & {\tiny{}-} & \textbf{\tiny{}35529} & {\tiny{}-} & {\tiny{}-} & {\tiny{}8E-04/-} & {\tiny{}6E-02/-} & \textbf{\tiny{}-/677} & {\tiny{}5E-03/-} & {\tiny{}5E-03/-}\tabularnewline
{\tiny{}500} & {\tiny{}1.0} & {\tiny{}$10^{0}$} & {\tiny{}$10^{4}$} & {\tiny{}-} & {\tiny{}67928} & \textbf{\tiny{}48991} & {\tiny{}-} & {\tiny{}-} & {\tiny{}5E-03/-} & {\tiny{}-/1103} & \textbf{\tiny{}-/807} & {\tiny{}6E-04/-} & {\tiny{}5E-04/-}\tabularnewline
{\tiny{}500} & {\tiny{}1.0} & {\tiny{}$10^{0}$} & {\tiny{}$10^{5}$} & {\tiny{}69861} & {\tiny{}49650} & \textbf{\tiny{}35549} & {\tiny{}-} & {\tiny{}-} & {\tiny{}-/1491} & {\tiny{}-/789} & \textbf{\tiny{}-/568} & {\tiny{}4E-04/-} & {\tiny{}4E-05/-}\tabularnewline
\hline 
{\tiny{}500} & {\tiny{}1.0} & {\tiny{}$10^{1}$} & {\tiny{}$10^{5}$} & {\tiny{}-} & {\tiny{}67875} & \textbf{\tiny{}48991} & {\tiny{}-} & {\tiny{}-} & {\tiny{}7E-03/-} & {\tiny{}-/1089} & \textbf{\tiny{}-/801} & {\tiny{}2E-03/-} & {\tiny{}6E-04/-}\tabularnewline
{\tiny{}500} & {\tiny{}1.0} & {\tiny{}$10^{2}$} & {\tiny{}$10^{5}$} & {\tiny{}-} & {\tiny{}123980} & \textbf{\tiny{}24988} & {\tiny{}-} & {\tiny{}-} & {\tiny{}7E-02/-} & {\tiny{}-/2009} & \textbf{\tiny{}-/425} & {\tiny{}1E-03/-} & {\tiny{}1E-03/-}\tabularnewline
{\tiny{}500} & {\tiny{}1.0} & {\tiny{}$10^{3}$} & {\tiny{}$10^{5}$} & {\tiny{}-} & {\tiny{}-} & \textbf{\tiny{}67534} & {\tiny{}-} & {\tiny{}-} & {\tiny{}1E+00/-} & {\tiny{}6E-01/-} & \textbf{\tiny{}-/1185} & {\tiny{}1E-03/-} & {\tiny{}5E-04/-}\tabularnewline
\hline 
{\tiny{}500} & {\tiny{}5.0} & {\tiny{}$10^{0}$} & {\tiny{}$10^{5}$} & {\tiny{}68644} & {\tiny{}50567} & \textbf{\tiny{}35274} & {\tiny{}-} & {\tiny{}-} & {\tiny{}-/1441} & {\tiny{}-/791} & \textbf{\tiny{}-/556} & {\tiny{}5E-04/-} & {\tiny{}3E-05/-}\tabularnewline
{\tiny{}500} & {\tiny{}10.0} & {\tiny{}$10^{0}$} & {\tiny{}$10^{5}$} & {\tiny{}73137} & {\tiny{}50497} & \textbf{\tiny{}35396} & {\tiny{}-} & {\tiny{}-} & {\tiny{}-/1566} & {\tiny{}-/794} & \textbf{\tiny{}-/559} & {\tiny{}3E-04/-} & {\tiny{}3E-05/-}\tabularnewline
{\tiny{}500} & {\tiny{}20.0} & {\tiny{}$10^{0}$} & {\tiny{}$10^{5}$} & {\tiny{}79126} & {\tiny{}50586} & \textbf{\tiny{}35242} & {\tiny{}-} & {\tiny{}-} & {\tiny{}-/1599} & {\tiny{}-/760} & \textbf{\tiny{}-/534} & {\tiny{}2E-04/-} & {\tiny{}3E-05/-}\tabularnewline
\hline 
\hline 
{\tiny{}1000} & {\tiny{}1.0} & {\tiny{}$10^{0}$} & {\tiny{}$10^{3}$} & {\tiny{}-} & {\tiny{}-} & \textbf{\tiny{}30340} & {\tiny{}-} & {\tiny{}-} & {\tiny{}6E-03/-} & {\tiny{}3E-02/-} & \textbf{\tiny{}-/2868} & {\tiny{}2E-03/-} & {\tiny{}6E-03/-}\tabularnewline
{\tiny{}1000} & {\tiny{}1.0} & {\tiny{}$10^{0}$} & {\tiny{}$10^{4}$} & {\tiny{}-} & {\tiny{}27184} & \textbf{\tiny{}16540} & {\tiny{}-} & {\tiny{}-} & {\tiny{}4E-03/-} & {\tiny{}-/2250} & \textbf{\tiny{}-/1380} & {\tiny{}1E-04/-} & {\tiny{}1E-04/-}\tabularnewline
{\tiny{}1000} & {\tiny{}1.0} & {\tiny{}$10^{0}$} & {\tiny{}$10^{5}$} & {\tiny{}-} & {\tiny{}35192} & \textbf{\tiny{}27672} & {\tiny{}-} & {\tiny{}-} & {\tiny{}4E-04/-} & {\tiny{}-/2952} & \textbf{\tiny{}-/2515} & {\tiny{}3E-02/-} & {\tiny{}2E-05/-}\tabularnewline
\hline 
{\tiny{}1000} & {\tiny{}1.0} & {\tiny{}$10^{1}$} & {\tiny{}$10^{5}$} & {\tiny{}-} & {\tiny{}27217} & \textbf{\tiny{}16540} & {\tiny{}-} & {\tiny{}-} & {\tiny{}4E-03/-} & {\tiny{}-/2298} & \textbf{\tiny{}-/1411} & {\tiny{}3E-02/-} & {\tiny{}1E-04/-}\tabularnewline
{\tiny{}1000} & {\tiny{}1.0} & {\tiny{}$10^{2}$} & {\tiny{}$10^{5}$} & {\tiny{}-} & {\tiny{}-} & \textbf{\tiny{}16129} & {\tiny{}-} & {\tiny{}-} & {\tiny{}4E-02/-} & {\tiny{}3E-02/-} & \textbf{\tiny{}-/1461} & {\tiny{}2E-02/-} & {\tiny{}3E-03/-}\tabularnewline
{\tiny{}1000} & {\tiny{}1.0} & {\tiny{}$10^{3}$} & {\tiny{}$10^{5}$} & {\tiny{}-} & {\tiny{}-} & \textbf{\tiny{}11325} & {\tiny{}-} & {\tiny{}-} & {\tiny{}3E-01/-} & {\tiny{}2E-01/-} & \textbf{\tiny{}-/1155} & {\tiny{}7E-03/-} & {\tiny{}3E-03/-}\tabularnewline
\hline 
{\tiny{}1000} & {\tiny{}5.0} & {\tiny{}$10^{0}$} & {\tiny{}$10^{5}$} & {\tiny{}-} & {\tiny{}35564} & \textbf{\tiny{}27810} & {\tiny{}-} & {\tiny{}-} & {\tiny{}4E-04/-} & {\tiny{}-/2986} & \textbf{\tiny{}-/2340} & {\tiny{}3E-02/-} & {\tiny{}2E-05/-}\tabularnewline
{\tiny{}1000} & {\tiny{}10.0} & {\tiny{}$10^{0}$} & {\tiny{}$10^{5}$} & {\tiny{}-} & {\tiny{}35515} & \textbf{\tiny{}27973} & {\tiny{}-} & {\tiny{}-} & {\tiny{}4E-04/-} & {\tiny{}-/2983} & \textbf{\tiny{}-/2354} & {\tiny{}3E-02/-} & {\tiny{}2E-05/-}\tabularnewline
{\tiny{}1000} & {\tiny{}20.0} & {\tiny{}$10^{0}$} & {\tiny{}$10^{5}$} & {\tiny{}-} & {\tiny{}-} & \textbf{\tiny{}28033} & {\tiny{}-} & {\tiny{}-} & {\tiny{}4E-04/-} & {\tiny{}7E-06/-} & \textbf{\tiny{}-/2358} & {\tiny{}3E-02/-} & {\tiny{}2E-05/-}\tabularnewline
\end{tabular}
\par\end{centering}
\caption{Iteration counts, runtimes, and residuals (see \eqref{eq:combined_resid})
for the Nonconvex QP Problem in Subsection~\ref{subsec:nonconvex_qp}.
Entries marked with \textquotedblleft --\textquotedblright{} are those
that either: (i) obtained a solution with a residual below the prescribed
tolerance; or (ii) did not obtain a solution within the given time
limit.\label{tab:qp}}
\end{table}

We now describe the experiment parameters for the problem instances
considered. First, we chose $\ell=25$, varied $n$ across different problem instances, set $\hat{\rho}=\hat{\eta}=10^{-5}$, and ensured
all generated matrices were fully dense. Second, the entries
of $Q$, $B$, $C$, and $d$ (resp. $D$) were generated by sampling
from the uniform distribution ${\cal U}[0,1]$ (resp. ${\cal U}\{1,...,1000\}$), and the vector $b$ was set to $b=Q(u)$ where $u$
is a random vector in ${\cal U}[-r,r]^n$. Third, the initial starting point $z_{0}$
was a set to be a random vector in ${\cal U}[-r,r]^n$. 
Finally, all experiments were run with a time limit of 3000 seconds,
and the tables of this subsection also report the minimum of the aggregate residuals 
\begin{equation}
\hat{r}:=\max\left\{ \frac{{\rm dist }(0,\nabla f(\hat{z})+\pt h(\hat{z})+\nabla g(\hat{z})\hat{p})}{1+\|\nabla f(z_{0})\|}, \frac{{\rm dist}(g(\hat{z}),N_{{\cal K}^*}(\hat{p}))}{1+{\rm dist}(g(z_{0}),-{\cal K})}\right\}. \label{eq:combined_resid}
\end{equation}
It is worth mentioning that we only report the above  residuals in our numerical experiments because it is (computationally) difficult to choose the right parameters in the S-prox-ALM that guarantee convergence (see Section~\ref{sec:conclusion} for more details).

\subsection{Comments about the numerical results} \label{subsec:summary}

Overall, the most efficient methods for the above experiments were the NL-IAPIAL variants (IPL and IPL(A)). IPL(A) performed particularly well on the linearly-constrained instances where the ratio $L_f/m$ was relatively small. Between the two NL-IAPIAL variants, IPL(A) is substantially more efficient. In the QC-QP experiments, we also noticed that the results of IPL variants did not fluctuate as much as the ones of HiAPeM across different problem instances. 

We conjecture that IPL and IPL(A) perform significantly better than
HiAPeM and iALM on some instances because they apply their multiplier updates more often. 

\section{Concluding Remarks}
\label{sec:conclusion}

We first discuss how the n-PAL methods and PAL methods described in the \textit{Overview of AL methods} part Section~\ref{sec:intro} above compare to one another. First, the subproblems generated by the n-PAL methods can be nonconvex whereas the ones generated by the PAL methods are always strongly convex.
Second, some n-PAL algorithms compute the approximate stationary point $z_k$ of  ${\cal L}_{\beta_k}(\cdot;p_{k-1})$  by using prox-type
methods that generate a sequence of
convex subproblems similar to those of the PAL methods.
Hence, the subproblems generated by the n-PAL methods 
are generally
much harder to solve than those generated by the PAL methods.

We now give a detailed comparison of NL-IAPIAL with the HiAPeM of \cite{HybridPenaltyAugLag19}. Both methods employ an ACG-type subroutine to inexactly solve a generated sequence of strongly convex proximal subproblems. 
Using nearly the same assumptions as in this paper and denoting $\varepsilon = \min\{\hat \rho, \hat \eta\}$, \cite{HybridPenaltyAugLag19} establishes an improved ${\cal O}(\varepsilon^{-2.5}\log\varepsilon^{-1})$ ACG iteration complexity of HiAPeM starting from any point in $\dom h$ for problems where ${\cal K}=\{0\}\times \r_+^n$. 
However, as noted in the \textit{Related works} part of Section~\ref{sec:intro}, HiAPeM is neither a PAL method (like NL-IAPIAL), nor an n-PAL method (like the iALM of \cite{ImprovedShrinkingALM20}), but rather an inexact PPM applied to nonconvex problem \eqref{eq:main_prb} (see, for example, \cite{rockafellar1976augmented} for the analysis of inexact PPMs for solving \eqref{eq:main_prb} in the convex setting). Loosely speaking, for some suitable prox stepsize $\lam > 0$, its $k$-th prox iteration computes an approximate stationary point $z_k$ of the strongly convex subproblem $\min_{z} \{\lam \phi(z) + \|z - z_{k-1}\|^2 / 2 : g(z) \preceq_{\cal K} 0\}$ by using either an accelerated penalty method or an accelerated AL method.
It is worth mentioning that in the case where $f$ is convex, solving the $k$-th subproblems corresponds to inexactly solving
\[
\partial_z {\cal L}_0(z;p) + \frac1\lam_k (z-z_{k-1}) \ni 0,  \quad - \partial_p {\cal L}_0(z;p) \ni 0,
\]
for $(z,p)=(z_k, p_k)$ (cf. \eqref{eq:pal_update} and \eqref{eq:n-pal_update}). 

We next compare NL-IAPIAL with the S-prox-ALM of \cite{ADMMJzhang-ZQLuo2020},
which is neither a PAL nor n-PAL method,
but is based on the augmented Lagrangian function and performs
multiplier updates similar to
the ones in PAL or n-PAL methods.
First, it is shown in \cite{ADMMJzhang-ZQLuo2020} that
S-prox-ALM has an
${\cal O}(\varepsilon^{-2})$ iteration complexity under the assumption that
$g$ is affine and the strong assumption
that the function $h$ in \eqref{eq:main_prb}  is the indicator function of a polyhedron. Second,
S-prox-ALM generates a sequence of proximal subproblems as in \eqref{eq:approx_primal_update}, 
but applies a single composite gradient step to inexactly solve a variant\footnote{Instead of inexactly minimizing the function $\lam {\cal L}(\cdot;p_{k-1}) + \|\cdot-z_{k-1}\|^2/2$, the S-prox-ALM exactly minimizes the linear approximation of the function $\lam {\cal L}(\cdot;p_{k-1}) + \|z-\tilde{z}_{k-1}\|/2$ for a point $\tilde{z}_{k-1}$ different from ${z_{k-1}}$. Hence, S-prox-ALM is neither a PAL method nor an n-PAL method.} of \eqref{eq:approx_primal_update} instead of an ACG-type subroutine. 
Finally, while the NL-IAPIAL method only requires choosing its parameters based
 on the scalars $m_{f}$, $L_{f}$, $L_g$, and $M_g$ to guarantee convergence, the S-prox-ALM requires
choosing its parameters based on the supremum of a set of Hoffman
constants (see the proof of \cite[Lemma 3.10]{ADMMJzhang-ZQLuo2020} and \cite[Lemma 4.8]{ADMMJzhang-ZQLuo2020})
that is generally difficult to compute and compare with the other constants of NL-IAPIAL.

Finally, it is worth mentioning that NL-IAPIAL is a slightly modified version of the proximal method of multipliers (PMM) studied by Rockafellar in \cite{MR0418919}. More specifically, the $k$-th iteration of the PMM consists of (3)--(4) with ${\cal K} = \r_+^\ell$ and $\lam_k=\beta_k$ for every $k$ and, hence, can be
viewed  as
inexactly solving (8) with $\lam_k=\beta_k$ and $\chi_k = 1$ so that both 
inclusions on it have the same prox stepsize.
Under the assumption that (1) is a convex optimization problem, Rockafellar then uses
classical results for inexact 
proximal point methods to analyze the
convergence of the PMM.
However, the approach outlined above
does not generalize to the nonconvex setting
in several aspects, namely:
(i) while the PMM converges when $\beta_k$ is constant, convergence of NL-IAPIAL requires $\beta_k$ to grow significantly; (ii) in contrast to the PMM, NL-IAPIAL chooses
$\lam_k$  to be a sufficiently small constant to convexify the subproblem in (3); and (iii) the analysis of NL-IAPIAL does not rely on proximal point theory for
maximal monotone operators since
the operator $(z,p)\mapsto[\partial_z {\cal L}_0(z;p), -\partial_p {\cal L}_0(z;p)]$ is not
monotone in the setting of
NL-IAPIAL.

\begin{appendices}

\section{Review of an ACG Algorithm} \label{sec:acg}

This section reviews an ACG algorithm 
invoked by	NL-IAPIAL  for solving  the sequence of
subproblems \eqref{eq:approx_primal_update} which
arise during its implementation.
It also describes a  bound on the number of ACG iterations performed in order to obtain a certain type of inexact solution of each subproblem.

    Consider the composite optimization problem
	
	\begin{equation}\label{eq:main_prob_acg}
	\min \ \{\psi(x):=\psi_s(x)+\psi_n(x) :  x \in \Re^n\}, 
	\end{equation}
	where the following conditions are assumed to hold:
	where the following conditions are assumed to hold:
	\begin{itemize}
		\item [{\bf(B1)}]
		$\psi_n:\Re^n\rightarrow (-\infty,+\infty]$ is a proper closed  convex  function;
		\item [{\bf (B2)}]$\psi_s$ is a convex differentiable function on $\dom \psi_n$
		and there exists  $({\widetilde \mu},{\widetilde M})\in \r_{+}^2$ satisfying $\widetilde M > \widetilde \mu$ and
		\begin{equation}\label{ineqs in Assump B2}
		    {\widetilde \mu}\|u-x\|^2/2 \leq \psi_s(u)-\ell_{\psi_s}(u; x) 
		\leq  {\widetilde M}\|u-x\|^2/2
		\end{equation}
		for every $x, u \in \dom \psi_n$, where $\ell_{\psi_s}(\cdot \,;\cdot)$ is defined in \eqref{eq:defell}.
	\end{itemize}

	The ACG algorithm, given $(y_0, \tilde \sigma)\in \dom \psi_n\times \r_{++}$, inexactly solves \eqref{eq:main_prob_acg} by computing a triple $(y,u,\eta)\in \dom \psi_n\times \Re^n\times \Re_{+}$ satisfying
    \begin{equation}\label{eq:inexact_acg_prb}
        u\in  \partial_{\eta}(\psi_s+\psi_n)(y) \quad \|u\|^{2}+2\eta\le \tilde\sigma^2\|y_{0}-y+u\|^{2}.
    \end{equation}
With this in mind, we now state the ACG variant considered in this paper.
	
\noindent \rule[0.5ex]{1\columnwidth}{1pt}

\noindent \textbf{ACG}

\noindent \rule[0.5ex]{1\columnwidth}{1pt}
\begin{itemize}
\item[(0)] Let a pair of functions $(\psi_{s},\psi_{n})$ satisfying \textbf{(B1)
}and \textbf{(B2)} for some $({\widetilde \mu},\widetilde M)\in\r_{+}^{2}$, a scalar
$\tilde{\sigma}>0$, and an initial point $y_{0}\in\dom\psi_{n}$
be given; set $x_0=y_0$, $A_{0}=0$, $\tau_0=1$,  and $j=0$;
\item[(1)]  $\zeta = 1/({\widetilde M} - {\widetilde \mu})$ and compute the quantities 
\begin{align}
 a_{j+1} & =\frac{\zeta\tau_{j}+\sqrt{(\zeta\tau_{j})^{2}+4\tau_{j}A_{j}}}{2}, \quad A_{j+1}=A_{j}+a_{j+1}, \quad \tilde{x}_{j+1} =\frac{A_{j} y_j + a_{j+1} x_j}{A_{j+1}}\nonumber\\
\tau_{j+1} & =\tau_{j} +{\widetilde \mu} a_{j+1}, \quad y_{j+1} =\argmin_{y\in\rn}\left\{ \ell_{\psi_s}(y;\tilde{x}_{j+1}) + \psi_n(y) +\frac{{\widetilde M}}{2}\|y-\tilde{x}_{j+1}\|^{2}\right\}, \label{eq:prox-acg}\\
x_{j+1} & = \frac{1}{\tau_{j+1}} \left[\frac{a_{j+1}}{\zeta}(y_{j+1} - \tilde{x}_{j+1}) + {\widetilde \mu} a_{j+1} y_{j+1} + \tau_j x_j \right]\nonumber;
\end{align}
\item[(2)] compute the quantities
\begin{align*}
 u_{j+1} &= {\widetilde \mu}(y_{j+1} - x_{j+1}) + \frac{x_{0}-x_{j+1}}{A_{j+1}}, \\
 \eta_{j+1} &= \frac{1}{2 A_{j+1}}\left(\|x_0 - y_{j+1}\|^2 - \tau_{j+1}\|x_{j+1}-y_{j+1}\|^2\right);
\end{align*}
\item[(3)] if the inequality 
\[
\|u_{j+1}\|^{2}+2\eta_{j+1}\leq\tilde{\sigma}^{2}\|y_{0}-y_{j+1}+u_{j+1}\|^{2}
\]
holds, then stop and output $(y,u,\eta):=(y_{j+1},u_{j+1},\eta_{j+1})$;
otherwise, set $j=j+1$ and go to (1).
\end{itemize}
\rule[0.5ex]{1\columnwidth}{1pt}

 	Some remarks about ACG follow. First, the most common way of describing an iteration of ACG is as in step~1. 
	Second, the
	auxiliary iterates pair $\{(u_j,\eta_j)\}$ computed  in step~2 is used to develop a stopping criterion for ACG  when it is called as a subroutine for solving the subproblems generated in step~1 of NL-IAPIAL
	in Subsection~\ref{sec:NLIAPIAL}.
Third, it can be shown (see for example   \cite{florea2018accelerated, fistaReport2021}) that ACG (without steps 2 and 3) with ${\widetilde \mu}=0$ corresponds to the well-known FISTA algorithm. 
	Fourth, the sequence $\{A_j\}$ has the following increasing property:
	$$ A_{j}\geq\frac{1}{{\widetilde M-\widetilde \mu}}\max\left\{\frac{j^{2}}{4},\left(1+\sqrt{\frac{{\widetilde \mu}}{4(\widetilde M-\widetilde \mu)}}\right)^{2(j-1)}\right\}, \qquad \forall j\geq 1. 
	$$
Finally, notice that each iteration of an ACG-type method consists of an ${\cal O}(1)$ number of $\psi_s$ function, $\psi_s$ gradient, and $\psi_n$ prox evaluations.

It is worth mentioning that adaptive variants\footnote{The closest variant to ACG in this paper can be found in \cite[Section 5.2]{KongThesis2021}.} of ACG have been studied, for example, in \cite{beck2009fast, KongThesis2021, pmlr-v32-lin14, nesterov2012gradient, parikh2014proximal}. One kind of adaptiveness
used in these variants, which is also used
inside some methods benchmarked in Section~4,
involves replacing ${\widetilde M}$ in
the computation of $y_{j+1}$ in step~1 by
an estimate $M_{j+1}$ computed as follows:
$M_{j+1}$ is initially set to be $M_{j}$ and, if necessary, is increased (either additively, multiplicatively, or both) and
step 1 is repeated a few times
(if needed)
until the inequality $\psi_s(y_{j+1}) - \ell_{\psi_s}(y_{j+1};\tilde x_{j+1})\leq M_{j+1} \|y_{j+1} - \tilde{x}_{j+1}\|^2 / 2 $ is satisfied. Observe that
every time
step 1 is repeated within the $j$-th iteration of ACG,
$\zeta$ changes (and hence
so do $a_{j+1}$, $A_{j+1}$, $\tilde x_{j+1}$, $\tau_{j+1}$, 
and $y_{j+1}$) since $M_{j+1}=\tilde M$ changes
adaptively.

	The next result, whose proof can be found in \cite[Lemma~2.13]{fistaReport2021}, summarizes  the main properties of the above ACG.

	\begin{proposition}\label{prop:nest_complex}
		Let $\{(y_j, u_j,\eta_j)\}_{j\geq 1}$ be the sequence generated by ACG applied to \eqref{eq:main_prob_acg},
		where $(\psi_s,\psi_n)$ is a given pair of data functions satisfying {\bf (B1)} and {\bf (B2)}. 
		Then, the following statements hold: 
		\begin{itemize}
			\item [(a)] for every $j\geq 1$, we have $\eta_j\geq0$ and  
			$u_j\in  \partial_{\eta_j}(\psi_s+\psi_n)(y_j)$;
			\item[(b)] for any $\tilde \sigma>0$, the ACG method outputs a triple $(y,u,\eta)\in \dom \psi_n\times\Re^n\times\Re_+$ 
			  satisfying  
			\begin{equation}\label{mainprob:nesterov}
			u\in  \partial_{\eta}(\psi_s+\psi_n)(y) \quad \|u\|^{2}+2\eta\le \tilde\sigma^2\|y_0-y+u\|^{2}
			\end{equation}
			in at most 
			\begin{equation}
\left\lceil 1+\left(\frac{1}{2}+\sqrt{\frac{\widetilde M-\widetilde\mu}{\widetilde\mu}}\right)\log_{1}^{+}\widetilde{\cal A} \right\rceil
\end{equation}
iterations, where
$$
\widetilde{\cal A}:=(2\widetilde\mu+3)(1+\widetilde\sigma)^{2}(\widetilde M-\widetilde\mu)\widetilde\sigma^{-2}.
$$
	\end{itemize}
\end{proposition}

\section{Convex Analysis} \label{app:cvx}

The first result presents some well-known 
properties about the projection and distance functions over a closed
convex set.
\begin{lem} \label{lem:dist_props}
Let $\cK \subseteq\rn$ be a nonempty  closed convex cone and $S$ be a nonempty closed convex set. Then the following properties hold:
\begin{itemize}
    \item[(a)] for every $u,z\in\rn$, we have $\|\Pi_{S}(u)-\Pi_{S}(u)\|\leq\|u-z\|$;
    \item[(b)] the function $d(\cdot):={\rm dist}^{2}(\cdot,S)/2$ is differentiable,
    and its gradient is given by
    \begin{equation}
    \nabla d(u)=u-\Pi_{S}(u)\in N_{S}(\Pi_{S}(u))\quad\forall u\in\rn; \label{eq:proj_incl}
    \end{equation}
    \item[(c)] it holds that $u\in N_{\cK^*}(p)$ if and only if $\inner{u}{p} = 0$, $u\in -\cK$, and $p \in \cK^*$.
\end{itemize}
\end{lem}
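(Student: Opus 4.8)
The plan is to prove the three items in order, with everything resting on the standard variational characterization of the Euclidean projection onto a nonempty closed convex set $S\subseteq\rn$: a point $w\in S$ equals $\Pi_S(u)$ if and only if $\inner{u-w}{z-w}\le0$ for every $z\in S$. I would state this characterization at the outset (citing, e.g., \cite{beck2017first}), since it already yields the normal-cone inclusion $u-\Pi_S(u)\in N_S(\Pi_S(u))$ appearing in (b), and it is the engine behind both (a) and (c).

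For (a), I would apply the characterization twice: at $u$ with test point $\Pi_S(z)\in S$, giving $\inner{u-\Pi_S(u)}{\Pi_S(z)-\Pi_S(u)}\le0$, and at $z$ with test point $\Pi_S(u)\in S$, giving $\inner{z-\Pi_S(z)}{\Pi_S(u)-\Pi_S(z)}\le0$. Adding these and regrouping yields $\|\Pi_S(u)-\Pi_S(z)\|^2\le\inner{u-z}{\Pi_S(u)-\Pi_S(z)}$, and Cauchy--Schwarz then gives $\|\Pi_S(u)-\Pi_S(z)\|\le\|u-z\|$. (As written, item (a) reads $\Pi_S(u)-\Pi_S(u)$ on the left-hand side; the intended statement is the one with $\Pi_S(u)-\Pi_S(z)$.)

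For (b), set $g(u):=u-\Pi_S(u)$ and $d(u):=\tfrac12\|u-\Pi_S(u)\|^2$. The target is the two-sided estimate
\[
0\ \le\ d(u')-d(u)-\inner{g(u)}{u'-u}\ \le\ \tfrac12\|u'-u\|^2\qquad\forall u,u'\in\rn,
\]
which immediately forces $d$ to be differentiable at $u$ with $\nabla d(u)=g(u)$. The right inequality follows from $d(u')\le\tfrac12\|u'-\Pi_S(u)\|^2$ (valid since $\Pi_S(u)\in S$), expanded about $u$. The left inequality follows by writing $u'-\Pi_S(u')=(u'-u)+(u-\Pi_S(u))+(\Pi_S(u)-\Pi_S(u'))$, squaring, and discarding the nonnegative square term together with the cross term $2\inner{u-\Pi_S(u)}{\Pi_S(u)-\Pi_S(u')}$, which is $\ge0$ by the projection inequality at $u$. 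The inclusion $\nabla d(u)=u-\Pi_S(u)\in N_S(\Pi_S(u))$ is then just a restatement of the projection characterization. I expect the only (mild) obstacle to be careful bookkeeping of the cross terms in that expansion; there is no real analytic content.

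For (c), I would use $N_{\cK^*}(p)=\{u:\inner{u}{q-p}\le0\ \forall q\in\cK^*\}$ when $p\in\cK^*$ (and $N_{\cK^*}(p)=\emptyset$ otherwise). If $u\in N_{\cK^*}(p)$, then choosing $q=0$ and $q=2p$ (both in the cone $\cK^*$) forces $\inner{u}{p}=0$, after which the defining condition reduces to $\inner{u}{q}\le0$ for all $q\in\cK^*$, i.e.\ $-u\in(\cK^*)^*=\cK$, using that $\cK$ is a closed convex cone; hence $u\in-\cK$. Conversely, if $p\in\cK^*$, $u\in-\cK$, and $\inner{u}{p}=0$, then for every $q\in\cK^*$ we get $\inner{u}{q-p}=\inner{u}{q}\le0$ because $-u\in\cK$, so $u\in N_{\cK^*}(p)$. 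This is exactly the complementarity description invoked after \eqref{eq:stationary_soln}.
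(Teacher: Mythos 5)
Your proposal is correct, and it is worth noting that the paper itself offers no proof of this lemma: it is stated as a collection of well-known facts with pointers to \cite{beck2017first} and \cite[Example 11.4]{VariaAna}, so your self-contained derivation supplies what the paper delegates to references. Your route is the standard one those references take, built entirely on the variational characterization $w=\Pi_S(u)$ $\Leftrightarrow$ $\inner{u-w}{z-w}\le 0$ for all $z\in S$: part (a) is the usual firm-nonexpansiveness argument plus Cauchy--Schwarz; part (b) is the two-sided quadratic estimate identifying $u-\Pi_S(u)$ as the gradient, where for the lower bound the clean bookkeeping is the identity
\begin{equation*}
2d(u')-2d(u)-2\inner{u-\Pi_S(u)}{u'-u}
=\bigl\|(u'-u)+(\Pi_S(u)-\Pi_S(u'))\bigr\|^2
+2\inner{u-\Pi_S(u)}{\Pi_S(u)-\Pi_S(u')}\ \ge\ 0,
\end{equation*}
which is exactly what your ``discard the square term and the cross term'' step amounts to, since the cross term is nonnegative by the projection inequality at $u$ with test point $\Pi_S(u')\in S$; and part (c) is the standard complementarity description of $N_{\cK^*}$, where testing with $q=0$ and $q=2p$ gives $\inner{u}{p}=0$ and the bipolar identity $(\cK^*)^*=\cK$ (valid since $\cK$ is a nonempty closed convex cone) gives $u\in-\cK$, with the converse immediate. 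You are also right about the two cosmetic issues in the statement: the left-hand side of (a) should read $\|\Pi_S(u)-\Pi_S(z)\|$, and $S$ in (a)--(b) is to be read as a nonempty closed convex set (the cone $\cK$ only enters in (c)).
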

\begin{proof}
See \cite[Theorem 5.4]{beck2017first} for (a), \cite[Example 6.61]{beck2017first} and \cite[Theorem 6.39(ii)]{beck2017first} for (b), and \cite[Example 11.4]{VariaAna} for (c).
\end{proof}

The next result presents a well-known fact (see, for example, \cite[Sub-subsection 2.13.2]{Dattorro05convexoptimization}) about closed convex cones.

\begin{lem} \label{lem:cone_generator}
    For any closed convex cone $\cK$, we have that $x\in \inte\cK$ if and only if 
    \begin{align}
    \inner{x}{p} >0 \quad \forall p\in \cK^* \quad \text{such that} \quad \|p\|=1.
    \end{align}
\end{lem}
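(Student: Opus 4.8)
The plan is to reduce everything to the bipolar theorem $(\cK^{*})^{*}=\cK$ (valid because $\cK$ is closed and convex, with the convention on $\cK^*$ from Subsection~\ref{subsec:notation}) together with positive homogeneity and a compactness remark. First I would dispose of the degenerate case $\cK^{*}=\{0\}$, which by the bipolar theorem is equivalent to $\cK=\Re^{n}$: then the feasible set $S:=\{p\in\cK^{*}:\|p\|=1\}$ of the infimum is empty, so the infimum equals $+\infty>0$, while $\inte\cK=\Re^{n}\ni x$; hence both sides of the claimed equivalence hold. From now on assume $\cK^{*}\ne\{0\}$, so that $S$ is nonempty (and closed and bounded, hence compact, though compactness is not strictly needed below).

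For the implication ``$\inf>0\Rightarrow x\in\inte\cK$'', suppose $\delta:=\inf_{p\in S}\inner{p}{x}>0$. By positive homogeneity of $p\mapsto\inner{p}{x}$ and of $\cK^{*}$, this yields $\inner{p}{x}\ge\delta\|p\|$ for every $p\in\cK^{*}$. Then for any $y$ with $\|y-x\|\le\delta$ and any $p\in\cK^{*}$,
\[
\inner{p}{y}=\inner{p}{x}+\inner{p}{y-x}\ge\delta\|p\|-\|p\|\,\|y-x\|=\|p\|\big(\delta-\|y-x\|\big)\ge0.
\]
Since $\cK=(\cK^{*})^{*}$ by the bipolar theorem, this shows that the closed ball of radius $\delta$ around $x$ is contained in $\cK$, so $x\in\inte\cK$.

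For the reverse implication, suppose $x\in\inte\cK$, so there is $\varepsilon>0$ with $\{y:\|y-x\|\le\varepsilon\}\subseteq\cK$. Given any $p\in S$, the point $y:=x-\tfrac{\varepsilon}{2}p$ satisfies $\|y-x\|=\varepsilon/2\le\varepsilon$, hence $y\in\cK$, and therefore $\inner{p}{y}\ge0$ because $p\in\cK^{*}$. Rearranging gives $\inner{p}{x}\ge\tfrac{\varepsilon}{2}\|p\|^{2}=\tfrac{\varepsilon}{2}$; taking the infimum over $p\in S$ yields $\inf_{p\in S}\inner{p}{x}\ge\varepsilon/2>0$, as desired.

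I do not expect any genuine obstacle here: the argument is elementary once the bipolar identity $(\cK^{*})^{*}=\cK$ is invoked, and the only point requiring a little care is the degenerate case $\cK=\Re^{n}$ (equivalently $\cK^{*}=\{0\}$), where the constraint set of the infimum is empty and the statement must be read with the convention $\inf\emptyset=+\infty$.
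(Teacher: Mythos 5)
Your proof is correct. The paper itself does not prove Lemma~\ref{lem:cone_generator}: it is stated as a well-known fact with a pointer to \cite[Sub-subsection 2.13.2]{Dattorro05convexoptimization}, so there is no in-paper argument to compare against. Your self-contained argument is a perfectly good substitute: the forward direction uses positive homogeneity to upgrade the infimum over the unit sphere of $\cK^*$ to the inequality $\inner{p}{x}\ge\delta\|p\|$ for all $p\in\cK^*$, then Cauchy--Schwarz plus the bipolar identity $(\cK^*)^*=\cK$ to place the whole ball $\{y:\|y-x\|\le\delta\}$ inside $\cK$; the reverse direction tests the interior ball at the perturbed points $x-\tfrac{\varepsilon}{2}p$. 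Both steps are sound, and you were right to isolate the degenerate case $\cK^*=\{0\}$ (equivalently $\cK=\Re^n$), where the feasible set of the infimum is empty and the statement only holds under the convention $\inf\emptyset=+\infty$ — this is exactly the one place where the lemma as stated needs a reading convention. A common textbook alternative is to use compactness of $\{p\in\cK^*:\|p\|=1\}$ and attain the infimum before arguing, but as you note this is not needed; your homogeneity-plus-bipolar route is if anything cleaner, and it is fully compatible with how the lemma is used in Proposition~\ref{prop:weak_slater} (with $x=-g_\iota(\bar z)\in\inte{\cal J}$).
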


The below technical result presents a fact about approximate subdifferentials, and its proof can be found, for example, in \cite[Lemma A.3]{RJWIPAAL2020}.

\begin{lemma} \label{lem:auxNewNest2}
		Let a proper function $\tilde \phi: \Re^n \to (-\infty,\infty]$,
		scalar $\tilde \sigma \in (0,1)$  and
		$(x_0,x) \in \Re^n \times     \dom \tilde \phi$ be given, and assume that there exists
		$(v,\varepsilon)$ such that 
		\begin{gather}
		v\in \partial_{\varepsilon} \left(\tilde\phi+\frac{1}{2}\|\cdot-x_0\|^2\right) (x), 
		\quad  \|v\|^2 + 2 \varepsilon \leq \tilde\sigma^2 \|v+x_{0}-x\|^{2}. \label{Auxeq:prox_incl}  
		\end{gather}
		Then, for every $x\in \Re^n$ and $s>0$, we have
		\[
		\tilde \phi(x)+\frac{1}{2} \left[ 1 - \tilde\sigma^2 ( 1 + s^{-1}) \right]\|v+x_0 - x\|^{2}\le \tilde \phi(z) +\frac{s+1}{2} \|z-x_0\|^2.
		\]
	\end{lemma}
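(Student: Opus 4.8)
The plan is to derive the inequality directly from the definition of the $\varepsilon_1$-subdifferential, using a single application of Young's inequality and then invoking the quantitative hypothesis $\|v_1\|^2+2\varepsilon_1\le\tilde\sigma^2\|v_1+z_0-z_1\|^2$ only at the very end. Throughout, abbreviate $r:=v_1+z_0-z_1$. Expanding the inclusion $v_1\in\partial_{\varepsilon_1}\bigl(\tilde\phi+\tfrac12\|\cdot-z_0\|^2\bigr)(z_1)$ at an arbitrary $z\in\Re^n$ (using \eqref{def:epsSubdiff}; the inequality is trivial unless $z\in\dom\tilde\phi$) gives
\[
\tilde\phi(z_1)+\tfrac12\|z_1-z_0\|^2\le \tilde\phi(z)+\tfrac12\|z-z_0\|^2-\langle v_1,z-z_1\rangle+\varepsilon_1 .
\]
The first step is then purely algebraic: substitute $z_1-z_0=v_1-r$ and $z-z_1=(z-z_0)-(v_1-r)$, expand the two squared norms and the inner product, and cancel the $\langle v_1,r\rangle$ contributions. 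This should reduce the displayed inequality to
\[
\tilde\phi(z_1)\le \tilde\phi(z)+\tfrac12\|z-z_0\|^2-\langle v_1,z-z_0\rangle+\tfrac12\|v_1\|^2-\tfrac12\|r\|^2+\varepsilon_1 .
\]

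The second step handles the cross term via Young's inequality $-\langle v_1,z-z_0\rangle\le \tfrac s2\|z-z_0\|^2+\tfrac1{2s}\|v_1\|^2$, which turns the coefficient of $\|z-z_0\|^2$ into $(s+1)/2$ and leaves behind the quantity $\tfrac12(1+s^{-1})\|v_1\|^2+\varepsilon_1$. Since $1+s^{-1}\ge 1$ for $s>0$, this leftover is at most $\tfrac12(1+s^{-1})(\|v_1\|^2+2\varepsilon_1)$, and the hypothesis then bounds it by $\tfrac12(1+s^{-1})\tilde\sigma^2\|r\|^2$. Combining this with the $-\tfrac12\|r\|^2$ already present yields
\[
\tilde\phi(z_1)\le \tilde\phi(z)+\frac{s+1}2\|z-z_0\|^2-\frac12\bigl[1-\tilde\sigma^2(1+s^{-1})\bigr]\|r\|^2 ,
\]
which is exactly the asserted bound once the term $\tfrac12[1-\tilde\sigma^2(1+s^{-1})]\|r\|^2=\tfrac12[1-\tilde\sigma^2(1+s^{-1})]\|v_1+z_0-z_1\|^2$ is transposed to the left-hand side.

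I do not anticipate any genuine obstacle: this is essentially a textbook estimate (and the statement is cited from \cite[Lemma~A.2]{melo2020iteration}), so the proposed derivation is just for completeness. The only points requiring care are (i) carrying out the algebraic substitution in the first step correctly, in particular checking that the $\langle v_1,r\rangle$ terms cancel, and (ii) recognizing that the harmless bound $1+s^{-1}\ge 1$ is precisely what lets $\varepsilon_1$ and $\|v_1\|^2$ be folded into the single quantity $\|v_1\|^2+2\varepsilon_1$, so that the lone hypothesis can be applied in one stroke. No result from earlier in the paper is needed beyond the definition of the $\varepsilon$-subdifferential.
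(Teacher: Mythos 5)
Your derivation is correct and is essentially the standard argument that the paper delegates to \cite[Lemma A.2]{melo2020iteration}: expand the $\varepsilon_1$-subdifferential inequality at $z$, simplify using $z_1-z_0=v_1-r$, apply Young's inequality with parameter $s$, and then invoke the hypothesis $\|v_1\|^2+2\varepsilon_1\le\tilde\sigma^2\|r\|^2$. The only implicit step is $\varepsilon_1\ge 0$, which you need in order to fold the leftover into $\tfrac12(1+s^{-1})(\|v_1\|^2+2\varepsilon_1)$; this is automatic here, since taking $z'=z_1$ in \eqref{def:epsSubdiff} together with $z_1\in\dom\tilde\phi$ forces $\varepsilon_1\ge 0$.
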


\section{Proof of Lemma~\ref{prop:refinement} and Lemma~\ref{prop:refinement1}(a)} \label{app:refine}

The first result, whose proof is given in \cite[Appendix A]{WJRproxmet1}, describes some properties of 
a composite gradient step.

\begin{lemma}\label{lem:approxsolreps}
Assume that $\tilde h\in \cConv \rn$,
$\tilde g$ is  a differentiable function on ${\dom \tilde h}$,
and $(z,\varepsilon) \in {\dom \tilde h} \times \Re_+$ is such that
\begin{equation}\label{Inc:repsdif}
0 \in \partial_{\varepsilon}( \tilde g + \tilde h) (z).
\end{equation}
Assume also that
there exists $\tilde L>0$ such that  \begin{equation}\label{uppercurvature_hatg}
\tilde g(u)- \ell_{\tilde g}(u; z) \leq\frac{\tilde L}{2}\|u-z\|^2\qquad \forall u \in {\mathcal \dom \tilde h},
\end{equation}
and define
\begin{equation}\label{eq:def_zhat}
{\tilde z} := \argmin_u \left\{ \ell_{\tilde g}(u;z)  + \tilde h(u) + \frac{\tilde L}{2} \|u-z\|^2  \right \},  \qquad \widetilde w:= \tilde L(z-\tilde z).
\end{equation}
Then,  the quadruple $(z,\tilde z, \widetilde w, \varepsilon)$ satisfies 
\begin{equation}\label{eq:inclusion_w}
 \widetilde w \in \nabla \tilde g(z) + \partial \tilde h({\tilde z}), \qquad \widetilde w \in \nabla \tilde g(z) + \partial_{{\varepsilon}} \tilde h(z), \qquad \|\widetilde w\| \leq \sqrt{2 \tilde L\varepsilon}. 
\end{equation}
\end{lemma}

The next result specializes the above results to our setting and gives two technical identities.

\begin{lem} \label{lem:spec_refine}
    Let  $\widetilde{\cal{L}}_{\beta}$ be as in \eqref{eq:L_tilde_def},
    let $\beta_k$, $(z_k, v_k, \varepsilon_k)$, $\hat{z}_{k}$, and $(z_{k-1}, p_{k-1})$ be
    as in the $k$-th iteration of NL-IAPIAL, and define  
\begin{equation}
    \tilde g:=\lam \widetilde{{\cal L}}_{\beta_k}(\cdot; p_{k-1})-\left\langle v_{k},\cdot\right\rangle + \frac{1}{2}\|\cdot-z_{k-1}\|^{2}, \quad \tilde h:=\lam h, \quad \tilde{w}_k := \widetilde{\cal M}_k(z_k-\hat z_k) \label{eq:psi_aux_defs}
\end{equation}
Then, it holds that 
\begin{equation}\label{incl-ineq-auxappendixprop}    
\tilde{w}_k\in\nabla \tilde g(z_k)+\pt \tilde h(\hat{z}_k), 
\quad \tilde{w}_k \in\nabla \tilde g(z_k)+\pt_{\varepsilon_k} \tilde h(z_k), 
\quad \|\tilde{w}_k\|\leq \sqrt{2\varepsilon_k\widetilde{\cal M}_k}.
\end{equation}
where $\widetilde{\cal M}_k$ is as in \eqref{eq:acg_input_aux_defs}. Moreover, it holds that
\begin{align}
\frac{1}{\lambda}\left(r_k+\nabla \tilde g(z_k)\right) & = \nabla_z \widetilde{{\cal L}}_{\beta_k}(z_k; p_{k-1}) 
= \nabla f(z_k) + \nabla g(z_k) \Pi_{{\cal K}^*}(p_{k-1} + \beta_{k} g(z_k)) \quad \forall u\in\rn. \label{eq:grad_psi_s}
\end{align}
\end{lem}

\begin{proof}
It follows from the definition of $\varepsilon$-subdifferential in  \eqref{def:epsSubdiff} and the fact that the triple $(z_{k},v_{k},\varepsilon_{k})$ satisfies the inclusion in \eqref{eq:prox_incl}  that  \eqref{Inc:repsdif} holds with $(\tilde g, \tilde h)$ and $(z,\varepsilon)=(z_k,\varepsilon_k)$. 
In view of assumptions (A1)--(A3), 
Lemma~\ref{lem:dist_smoothness}, and the definition of $\widetilde{\cal M}_k$ in \eqref{eq:acg_input_aux_defs},  the functions pair $(\tilde g,\tilde h)$ defined above satisfies the assumptions of Lemma~\ref{lem:approxsolreps} with $\widetilde L=\widetilde{\cal M}_k$. Note also that  the element  $\tilde z$ computed according to \eqref{eq:def_zhat} corresponds to $\hat z_k$ computed  in \eqref{eq:z_k_def}, in view of the definition of $r_k$   given in \eqref{eq:dual_update2}.  Hence, it follows from Lemma~\ref{lem:approxsolreps} that \eqref{incl-ineq-auxappendixprop} holds. The last statement of the lemma follows from the definition of $r_k$ in \eqref{eq:dual_update2}  and Lemma~\ref{lem:dist_smoothness}(b).
\end{proof}

We are now ready to prove Lemma~\ref{prop:refinement1}(a).

\begin{proof}[Proof of Lemma~\ref{prop:refinement1}(a)]
\noindent Let $\tilde h$ be as in \eqref{eq:dual_update2}. In view of \eqref{def:epsSubdiff},  the definitions of $p_k$ and  $w_k$ in \eqref{eq:dual_update2} and  \eqref{eq:refine_aux_defs}, respectively, and Lemma~\ref{lem:spec_refine}, we have
\begin{align}
w_k  = \frac{1}{\lam} \left(r_k + \widetilde{\cal M}_k(z_k - \hat z_k)\right) &\in \frac{1}{\lam} \left(r_k + \nabla \tilde g(z_k) + \pt_{\varepsilon_k} \tilde h(z_k) \right) \nonumber \\
& = \nabla f(z_k) + \nabla g(z_k) \Pi_{{\cal K}^*}(p_{k-1} + \beta_{k} g(z_k)) + \pt_{(\lam^{-1}\varepsilon_k)} h(z_k) \nonumber \\
& = \nabla f(z_k) + \nabla g(z_k) p_k + \pt_{(\lam^{-1}\varepsilon_k)}  h(z_k), \nonumber
\end{align}
which proves  the inclusion in \eqref{eq:weak_refine}.
We now show that the inequalities in \eqref{eq:weak_refine} hold. The bound on $\varepsilon_k$ in \eqref{eq:weak_refine} follows immediately from the inequality in \eqref{eq:prox_incl} and the definition of  $r_k$  given in \eqref{eq:refine_aux_defs}. Now, it follows from the inequality in \eqref{eq:prox_incl},  the definition of $r_k$ and $w_k$ in \eqref{eq:dual_update2} and \eqref{eq:refine_aux_defs}, respectively, the triangle inequality for norms, and Lemma~\ref{lem:spec_refine}  that
\begin{align}
\lam \|w_k\| &= \|r_k + \widetilde{\cal M}_k(z_k - \hat z_k)\| \leq \|r_k\|+ \widetilde{\cal M}_k\|z_{k}-\hat{z}_{k}\| \nonumber \\
& \leq \|r_k\| + \sqrt{2\varepsilon_{k}\widetilde{\cal M}_k} 
\leq \left(1+\sigma_k\sqrt{\widetilde{\cal M}_k}\right)\|r_{k}\|, \label{eq:delta_z_hat_bd}
\end{align}
which immediately implies the desired bound on $\|w_k\|$ in view of the definition of $\sigma_k$ in \eqref{eq:acg_input_aux_defs}.
\end{proof}

We now close with the proof of Lemma~\ref{prop:refinement}. 

\begin{proof}[Proof of Lemma~\ref{prop:refinement}]

We first show that the inclusion in \eqref{eq:Relations(a)-PropRef} holds. Using the first identity in \eqref{eq:grad_psi_s}, 
Lemma~\ref{lem:spec_refine}, Lemma~\ref{lem:dist_smoothness}(b), 
and the definitions of $w_k$ and $(\hat w_k, \hat p_k)$ in \eqref{eq:refine_aux_defs} and \eqref{eq:refined_points}, respectively, we have 
\begin{align}
\hat w_k & = \frac{1}{\lam} \left[r_k + \widetilde{\cal M}_k(z_k - \hat z_k)\right] + \left[\nabla_z\widetilde{{\cal L}}_{\beta_k} (\hat{z}_{k}; p_{k-1})-\nabla_z\widetilde{{\cal L}}_{\beta_k} (z_{k}; p_{k-1})\right] \nonumber \\[3mm] 
& \in \frac{1}{\lam} \left[r_k + \nabla \tilde g(z_k) + \pt \tilde h(\hat z_k) \right]+ \left[\nabla_z\widetilde{{\cal L}}_{\beta_k} (\hat{z}_{k}; p_{k-1})-\nabla_z\widetilde{{\cal L}}_{\beta_k} (z_{k}; p_{k-1})\right] \nonumber \\[3mm]
& = \nabla_z\widetilde{{\cal L}}_{\beta_k} (\hat{z}_{k}; p_{k-1}) + \pt h(\hat z_k) = \nabla f(\hat z_k) + \nabla g(\hat z_k) \Pi_{{\cal K}^*}(p_{k-1} + \beta_{k} g(\hat z_k)) + \pt h(\hat z_k) \nonumber \\[3mm]
& = \nabla f(\hat z_k) + \nabla g(\hat z_k) \hat p_k + \pt h(\hat z_k), \nonumber
\end{align}
which is the desired inclusion in \eqref{eq:Relations(a)-PropRef}. We now show that the bound on $\|\hat w_k\|$ in \eqref{eq:strong_refine} holds. Using its definition in \eqref{eq:refined_points},   Lemma~\ref{lem:dist_smoothness}(c) and the definition of $\widetilde{\cal M}_k$ in \eqref{eq:acg_input_aux_defs}, 
the inequality in \eqref{eq:prox_incl},
the definition of $r_k$ given in \eqref{eq:refine_aux_defs}, 
Lemma~\ref{lem:spec_refine}, the triangle inequality for norms, 
and \eqref{eq:delta_z_hat_bd},  we have 
\begin{align}
\lam \|\hat w_k\| & \leq \lam \|w_k \| + \lam \|\nabla_z\widetilde{{\cal L}}_{\beta_k} (\hat{z}_{k}; p_{k-1})-\nabla_z\widetilde{{\cal L}}_{\beta_k} (z_{k}; p_{k-1}) \| \nonumber \\
& \leq \left(1+\sigma_k\sqrt{\widetilde{\cal M}_k}\right)\|r_{k}\| + \widetilde{\cal M}_k \|\hat z_k - z_k\| \leq \left(1+ 2\sigma_k\sqrt{\widetilde{\cal M}_k}\right)\|r_{k}\|, \nonumber
\end{align}
which immediately implies the desired bound on $\|\hat{w}_k\|$ in view of  the definition of $\sigma_k$ in \eqref{eq:acg_input_aux_defs}.

To show the bound on $\hat{q}_k$, we first use the definitions of $B_g^{(1)}$,   $p_{k}$, and $\hat{p}_{k}$ given in \eqref{eq:bd_Psi_val},  \eqref{eq:dual_update2}, and \eqref{eq:refined_points}, respectively, the last two inequalities in  \eqref{eq:delta_z_hat_bd}, the Mean Value Inequality, and Lemma~\ref{lem:dist_props}(a) to obtain
\begin{align*}
\frac{1}{\beta_k}\|\hat{p}_{k}-p_{k}\| & =\frac{1}{\beta_k}\left\Vert \Pi_{{\cal K}^{*}}\left(p_{k-1}+\beta_kg(\hat{z}_{k})\right)-\Pi_{{\cal K}^{*}}\left(p_{k-1}+\beta_kg(z_{k})\right)\right\Vert \leq\frac{1}{\beta_k}\|\beta_kg(\hat{z}_{k})-\beta_kg(z_{k})\| \\
 & \leq \sup_{t \in [0,1]} \|\nabla g(t \hat{z}_k + [1-t]z_k)\| \cdot \|\hat{z}_{k}-z_{k}\| \leq B_g^{(1)}\|\hat{z}_{k}-z_{k}\| \leq \frac{B_g^{(1)}\sigma_k}{\sqrt{\widetilde{\cal M}_k}}\|r_{k}\|.
\end{align*}
Hence, using the triangle inequality for norms and the definition of $\hat{q}_k$ given in \eqref{eq:refined_points}, we have
\[
\|\hat{q}_k\| = \frac{1}{\beta_k}\|\hat{p}_k - p_{k-1}\| \leq  \frac{1}{\beta_k}\|\hat{p}_k - p_k\| + \frac{1}{\beta_k}\|p_{k} - p_{k-1}\| \leq \frac{B_g^{(1)}\sigma_k}{\sqrt{\widetilde{\cal M}_k}}\|r_{k}\| + \frac{1}{\beta_k}\|p_{k} - p_{k-1}\|,
\]
which proves the bound on $\hat q_k$ in view of the definition of $\sigma_k$ in \eqref{eq:acg_input_aux_defs}.

To finish the proof of Lemma~\ref{prop:refinement}, it remains to show that the last three relations in \eqref{eq:Relations(a)-PropRef} hold. The last relation in \eqref{eq:Relations(a)-PropRef} follows immediately from the definition of $\hat p_k$ in \eqref{eq:refined_points}. Now, using Lemma~\ref{lem:dist_props}(b) with $S={\cal K}^*$ and $u=p_{k-1}+\beta_kg(\hat{z}_{k})$ as well as the definitions of $\hat q_k$ and $\hat p_k$ in \eqref{eq:refined_points}, we have that
\begin{equation}
g(\hat z_k) + \hat q_k = \frac{1}{\beta_k}\left[p_{k-1}+\beta_kg(\hat{z}_{k})-\hat{p}_{k}\right] \in N_{\cK^*}(\hat p_k). 
\end{equation}
Hence, the remaining relations in \eqref{eq:Relations(a)-PropRef} follow from   the above relation and Lemma~\ref{lem:dist_props}(c) with $u=g(\hat{z}_{k})+\hat q_{k}$ and $p=\hat{p}_k$.
\end{proof}

\section{Proof of Proposition~\ref{prop:weak_slater}}
\label{app:slater}

\begin{proof} [(a) $\implies$ (b)] This is immediate.

[(b) $\implies$ (c)] Suppose (b) holds. If $\bar{z}$ satisfies
(c) then we are done, so suppose that $g_{\iota}(\bar{z})\not\prec_{{\cal J}}0$
and $g_{e}(\bar{z})=0$. Our goal is to find $d\in\rn$ such that (c) holds
with $\bar{z}=\bar{z}+d$, which in view of Lemma~\ref{lem:cone_generator}
with $x=-g_{\iota}(\bar{z}+d)$ and the fact that $g_{e}$ is affine,
is equivalent to
\begin{equation}
g_{e}'(\bar{z})d=0,\quad\inf_{\|p_{\iota}\|=1,p_{\iota}\in{\cal J}^{*}}\left\langle -g_{\iota}(\bar{z}+d),p_{\iota}\right\rangle >0.\label{eq:goal}
\end{equation}
We now bound the left-hand-side of the inequality in \eqref{eq:goal}.
Using the assumption that $\nabla g_{\iota}(\cdot)$ is $L_{g_{\iota}}$-Lipschitz,
we have
\begin{align}
\inf_{\|p_{\iota}\|=1,p_{\iota}\in{\cal J}^{*}}-\left\langle g_{\iota}\left(\bar{z}+d\right),p_{\iota}\right\rangle  & =\inf_{\|p_{\iota}\|=1,p_{\iota}\in{\cal J}^{*}}-\left\langle g_{\iota}(\bar{z})+g_{\iota}'(\bar{z})d+\left[g_{\iota}\left(\bar{z}+d\right)-g_{\iota}(\bar{z})-g_{\iota}'(\bar{z})d\right],p_{\iota}\right\rangle \nonumber \\
 & \geq\inf_{\|p_{\iota}\|=1,p_{\iota}\in{\cal J}^{*}}\left\langle -g_{\iota}(\bar{z})-g_{\iota}'(\bar{z})d,p_{\iota}\right\rangle -\|g_{\iota}\left(\bar{z}+d\right)-g_{\iota}(\bar{z})-g_{\iota}'(\bar{z})d\|\nonumber \\
 & \geq\inf_{\|p_{\iota}\|=1,p_{\iota}\in{\cal J}^{*}}\left\langle -g_{\iota}(\bar{z})-g_{\iota}'(\bar{z})d,p_{\iota}\right\rangle -\frac{L_{g_{\iota}}\|d\|^{2}}{2},\label{eq:slater_main_ineq}
\end{align}
for any $d\in\rn$, so it suffices to find $d\in\rn$ so that the
last expression in \eqref{eq:slater_main_ineq} is positive. To find
an appropriate direction, we let $0\neq q_{\iota}\in\intr{\cal J}$
and consider the primal-dual conic optimization problems
\begin{equation}
\left(\underbrace{\begin{aligned}\min_{p}\  & -\left\langle p_{\iota},g_{\iota}(\bar{z})\right\rangle \\
\text{s.t.}\  & \nabla g_{\iota}(\bar{z})p_{\iota}+\nabla g_{e}(\bar{z})p_{e}=0\\
 & \left\langle q_{\iota},p_{\iota}\right\rangle =1\\
 & p_{\iota}\in{\cal K}^{*},p_{e}\in\r^{t}
\end{aligned}
}_{(P)}\right)\equiv\left(\underbrace{\begin{aligned}\max_{d,\mu}\  & \mu\\
\text{s.t.}\  & -g_{\iota}(\bar{z})-g{}_{\iota}'(\bar{z})d\succeq_{{\cal J}}\mu q_{\iota}\\
 & g_{e}'(\bar{z})d=0\\
 & d\in\rn,\mu\in\r
\end{aligned}
}_{(D)}\right).\label{eq:primal_dual_pair}
\end{equation}
Denoting $p_{\iota}^{*}$ and $(d^{*},\mu^{*})$ to be optimal solutions
of $(P)$ and $(D)$, respectively, we show that $\mu^{*}$ is positive
and then argue that $d^{*}$ is an appropriate direction. Using the
fact that $(D)$ has a Slater point (and hence strong duality holds
for \eqref{eq:primal_dual_pair}), our assumption that $-g(\bar{z})\in{\cal K}$
(and hence $-\langle p^{*},g(\bar{z})\rangle\geq0$), and \eqref{eq:sp0},
it follows that 
\begin{equation}
\mu^{*}=-\left\langle p_{\iota}^{*},g_{\iota}(\bar{z})\right\rangle =\max\left\{ \left|\left\langle \left[\begin{array}{c}
p_{\iota}^{*}\\
0
\end{array}\right],g(\bar{z})\right\rangle \right|,\left\Vert \nabla g(\bar{z})\left[\begin{array}{c}
p_{\iota}^{*}\\
0
\end{array}\right]\right\Vert \right\} \geq\tilde{\tau}_{g}\|p_{\iota}^{*}\|>0,\label{eq:pos_mu}
\end{equation}
where the last inequality follows from the second constraint in $(P)$,
the fact that $q_{\iota}\in\intr{\cal J}$, and Lemma~\ref{lem:cone_generator}
with $(p,x)=(p_{\iota}^{*},q_{\iota})$. Since $g_{e}'(\bar{z})d^{*}=0$
from the second constraint of $(D)$, it only remains to show that
the last expression in \eqref{eq:slater_main_ineq} is positive for
some positive multiple of $d^{*}$, i.e., $d=\lam d^{*}$ for some
$\lam>0$. Using the fact that $d^{*}$ is feasible to $(D)$ and
our assumption that $g_{\iota}(\bar{z})\preceq_{{\cal J}}0$ (and
hence $-\langle p_{\iota},g(\bar{z})\rangle\geq0$ for every $p_{\iota}\in{\cal J}^{*}$),
we first have that for $\lam<1$ and $d=\lam d^{*}$,
\begin{align}
 & \inf_{\|p_{\iota}\|=1,p_{\iota}\in{\cal J}^{*}}\left\langle -g_{\iota}(\bar{z})-g_{\iota}'(\bar{z})d,p_{\iota}\right\rangle -\frac{L_{g_{\iota}}\|d\|^{2}}{2}\nonumber \\
 & =\lam\left[\inf_{\|p_{\iota}\|=1,p_{\iota}\in{\cal J}^{*}}\left\langle -\frac{1}{\lam}g_{\iota}(\bar{z})-g_{\iota}'(\bar{z})d^{*},p_{\iota}\right\rangle -\frac{\lam L_{g_{\iota}}\|d^{*}\|^{2}}{2}\right]\nonumber \\
 & \geq\lam\left[\inf_{\|p_{\iota}\|=1,p_{\iota}\in{\cal J}^{*}}\left\langle -g_{\iota}(\bar{z})-g_{\iota}'(\bar{z})d^{*},p_{\iota}\right\rangle -\frac{\lam L_{g_{\iota}}\|d^{*}\|^{2}}{2}\right]\nonumber \\
 & \geq\lam\left[\mu^{*}\inf_{\|p_{\iota}\|=1,p_{\iota}\in{\cal J}^{*}}\left\langle q_{\iota},p_{\iota}\right\rangle -\frac{\lam L_{g_{\iota}}\|d^{*}\|^{2}}{2}\right]\nonumber \\
 & =\lam\left[\mu^{*}\nu-\frac{\lam L_{g_{\iota}}\|d^{*}\|^{2}}{2}\right],\label{eq:d_eps_bd}
\end{align}
where $\nu:=\inf_{\|p_{\iota}\|=1,p_{\iota}\in{\cal J}^{*}}\langle q_{\iota},p_{\iota}\rangle$.
Using \eqref{eq:pos_mu} and Lemma~\ref{lem:cone_generator} with
$(p,x)=(p_{\iota},q_{\iota})$, it holds that $\mu^{*}\nu>0$ and,
hence, there exists $\lam>0$ sufficiently small so that the last
expression in \eqref{eq:d_eps_bd} is positive. As a consequence,
it follows from \eqref{eq:slater_main_ineq} that \eqref{eq:goal}
holds, or equivalently, (c) holds with $\bar{z}=\bar{z}+\lam d^{*}$.

{[}(c) $\implies$ (a){]} Suppose (c) holds. Since $g_{e}$ is affine
and onto, its gradient matrix $G_{e}:=\nabla g_{e}$ is independent
of $z$ and has full column rank. Hence, there exists $\tau_{g_{e}}>0$
such that 
\begin{equation}
\|G_{e}p_{e}\|\ge\tau_{g_{e}}\|p_{e}\|_{1}\quad\forall p_{e}\in\r^{t}.\label{eq:wslater_bd1}
\end{equation}
On the other hand, the assumption that $g_{\iota}(\bar{z})\prec_{{\cal J}}0$,
and Lemma~\ref{lem:cone_generator} with ${\cal K}={\cal J}$ and
$x=-g_{\iota}(\bar{z})$, imply that there exists $\tau_{g_{\iota}}>0$
such that 
\[
-\left\langle p_{\iota},g_{\iota}(\bar{z})\right\rangle \ge\tau_{g_{\iota}}\|p_{\iota}\|\quad\forall p_{\iota}\in{\cal J}^{*}.
\]
Using the previous inequality and the fact that $\|\nabla g_{\iota}(z)\|$
is bounded on ${\cal H}$, we conclude that there exists $\gamma>0$
such that 
\begin{equation}
-\|\nabla g_{\iota}(z)p_{\iota}\|-2\gamma\inner{p_{\iota}}{g_{\iota}(\bar{z})}\ge[2\gamma\tau_{g_{\iota}}-\|\nabla g_{\iota}(z)\|]\cdot\|p_{\iota}\|\geq\tau_{g_{\iota}}\|p_{\iota}\|_{1}\quad\forall z\in{\cal H}.\label{eq:wslater_bd2}
\end{equation}
Relations \eqref{eq:wslater_bd1}, \eqref{eq:wslater_bd2}, and the
reverse triangle inequality, then imply that for every $z\in{\cal H}$,
\begin{align*}
 & \|\nabla g(z)p\|-2\gamma\left\langle p,g(\bar{z})\right\rangle =\|\nabla g_{\iota}(z)p_{\iota}+G_{e}p_{e}\|-2\gamma\left\langle p_{\iota},g_{\iota}(\bar{z})\right\rangle \\
 & \geq\|G_{e}p_{e}\|-\|\nabla g_{\iota}(z)p_{\iota}\|-2\gamma\left\langle p_{\iota},g_{\iota}(\bar{z})\right\rangle \geq\tau_{g_{e}}\|p_{e}\|_{1}+\tau_{g_{\iota}}\|p_{\iota}\|_{1}\geq\hat{\tau}\|p\|_{1}\geq\hat{\tau}\|p\|,
\end{align*}
where $\hat{\tau}:=\min\{\tau_{g_{e}},\tau_{g_{\iota}}\}$. It is
now straightforward to see that the above inequality yields inequality
\eqref{eq:gen_slater} with $\tau_{g}=\hat{\tau}/(1+2\gamma)$. Statement
(a) now follows from \eqref{eq:gen_slater} and the previous conclusion.
\end{proof}

\end{appendices}

\ACKNOWLEDGMENT{
The first author has been supported by the US Department of Energy (DOE) and UT-Battelle, LLC, under contract DE-AC05-00OR22725 and also supported by the Exascale Computing Project (17-SC-20-SC), a collaborative effort of the U.S. Department of Energy Office of Science and the National Nuclear Security Administration.
The second author was partially supported by ONR Grant N00014-18-1-2077 and AFOSR Grant FA9550-22-1-0088.
}

\typeout{}
\bibliographystyle{informs2014}
\bibliography{Proxacc_ref}

\end{document}